\definecolor{shadecolor}{gray}{0.9}
\theoremstyle{plain}  
\newtheorem{thm}{Theorem}[section] 
\newtheorem{lem}[thm]{Lemma} 
\newtheorem{prop}[thm]{Proposition} 
\newtheorem{cor}[thm]{Corollary} 
\theoremstyle{definition} 
\newtheorem{defn}[thm]{Definition}
\newtheorem{exmp}[thm]{Example}
\newtheorem{rem}[thm]{Remark}
\newtheoremstyle{assumption}
{3pt}
{3pt}
{}
{}
{\bf}
{.}
{.5em}
{\thmname{#1} (\thmnote{#3}\thmnumber{#2})}
\theoremstyle{assumption}
\theoremstyle{remark} 
\newcommand{\diff}{\mathrm{d}}
\newcommand{\dint}{\,\mathrm{d}}
\newcommand{\E}{\mathbb{E}}
\newcommand{\ES}{\operatorname{ES}}
\newcommand{\Var}{\operatorname{Var}}
\newcommand{\eps}{\varepsilon}
\newcommand{\supp}{\operatorname{supp}}
\newcommand{\F}{\mathcal{F}}
\newcommand{\R}{\mathbb{R}}
\newcommand{\A}{\mathsf{A}}
\renewcommand{\O}{\mathsf{O}}
\newcommand{\one}{\mathds{1}}
\newcommand{\interior}{\operatorname{int}}
\newcommand{\rank}{\operatorname{rank}}
\newcommand{\ph}{\varphi}
\newcommand{\VaR}{\operatorname{VaR}}
\DeclareMathOperator*{\argmin}{arg\,min}
\newcommand{\sgn}{\operatorname{sgn}}
\newcommand{\N}{\mathbb N}
\renewcommand{\P}{\mathbb P}
\renewcommand{\L}{\mathcal L}
\renewcommand{\a}{\alpha}
\renewcommand{\rm}{\normalfont \rmfamily}
\renewcommand{\bf}{\normalfont \bfseries}
\def\be{\begin{equation} \label}
\def\ee{\end{equation}}
\numberwithin{equation}{section} 
\begin{document}

\title{Order-Sensitivity and Equivariance of Scoring Functions}
\author{Tobias Fissler\thanks{Imperial College London, Department of Mathematics, Section Statistics, 180 Queen's Gate,
London SW7 2AZ, United Kingdom, 
e-mail: \texttt{t.fissler@imperial.ac.uk}} \and Johanna F.~Ziegel\thanks{University of Bern, Department of Mathematics and Statistics, Institute of Mathematical Statistics and Actuarial Science, Sidlerstrasse 5, 3012 Bern, Switzerland, 
e-mail: \texttt{johanna.ziegel@stat.unibe.ch}}}
\maketitle

\begin{abstract}
\textbf{Abstract.}
The relative performance of competing point forecasts is usually measured in terms of loss or scoring functions. It is widely accepted that these scoring function should be strictly consistent in the sense that the expected score is minimized by the correctly specified forecast for a certain statistical functional such as the mean, median, or a certain risk measure. Thus, strict consistency opens the way to meaningful forecast comparison, but is also important in regression and M-estimation. 
Usually strictly consistent scoring functions for an elicitable functional are not unique. To give guidance on the choice of a scoring function, this paper introduces two additional quality criteria. Order-sensitivity opens the possibility to compare two deliberately misspecified forecasts given that the forecasts are ordered in a certain sense. On the other hand, equivariant scoring functions obey similar equivariance properties as the functional at hand -- such as translation invariance or positive homogeneity.
In our study, we consider scoring functions for popular functionals, putting special emphasis on vector-valued functionals, e.g.\ the pair (mean, variance) or (Value at Risk, Expected Shortfall).
\end{abstract}
\noindent
\textit{Keywords:}
Consistency; Decision theory; Elicitability; Equivariance; M-Estimation; Order-Sensitivity; Point forecasts; Scoring functions; Translation invariance; Homogeneity

\noindent
\textit{AMS 2010 Subject Classification: } 62C99; 62F07; 62G99; 91B06

\section{Introduction}

From the cradle to the grave, human life is full of decisions. Due to the inherent nature of time, decisions have to be made today, but at the same time, they are supposed to account for unknown and uncertain future events. 
However, since these future events cannot be \emph{known} today, the best thing to do is to base the decisions on \emph{predictions} for these unknown and uncertain events. The call for and the usage of predictions for future events is literally ubiquitous and even dates back to ancient times. 
In those days, dreams, divination, and revelation were considered as respected sources for forecasts, with the most prominent example being the Delphic Oracle which was not only consulted for decisions of private life, but also for strategic political decisions concerning peace and war.
With the development of natural sciences, mathematics, and in particular statistics and probability theory, the ancient metaphysical art of making qualitative forecasts turned into a sophisticated discipline of science adopting a quantitative perspective. Subfields such as meteorology, mathematical finance, or even futurology evolved.

Acknowledging that forecasts are inherently uncertain, two main questions arise:
\begin{enumerate}[(i)]
\item
How good is a forecast in \emph{absolute} terms?
\item
How good is a forecast in \emph{relative} terms?
\end{enumerate}
While question (i) deals with \emph{forecast validation}, this paper focuses on some aspects of question (ii) which is concerned with \emph{forecast selection}, \emph{forecast comparison}, or \emph{forecast ranking}. Specifically, we present results on order-sensitivity and equivariance of consistent scoring functions for elicitable functionals. These results may provide guidance for choosing a specific scoring function for forecast comparison within the large class of all consistent scoring functions for an elicitable functional of interest.

We adopt the general decision-theoretic framework following \cite{Gneiting2011}; cf.~\citet{Savage1971, Osband1985, Lambert2008}. For some number $n\ge1$, one has
\begin{enumerate}[(a)]
\item
observed \emph{ex post} realizations $y_1, \ldots, y_n$ of a time series $(Y_t)_{t\in \N}$, taking values in an \emph{observation domain} $\O$ with a $\sigma$-algebra $\mathcal O$;
\item
a family $\F$ of probability distributions on $(\O, \mathcal O)$, containing the (conditional) distributions of $Y_t$;
\item
\emph{ex ante} forecasts $x_1^{(i)}, \ldots, x_n^{(i)}$, $i\in\{1, \ldots, m\}$ of $m\ge1$ competing experts\,/\,forecasters taking values in an \emph{action domain} $\A \subseteq \mathbb{R}^k$ for some $k \ge 1$;
\item
a \emph{scoring (or loss) function} $S\colon\A\times \O\to\R$. The scoring function is assumed to be negatively oriented, that is, if a forecaster reports the quantity $x\in\A$ and $y\in\O$ materializes, she is assigned the \emph{penalty} $S(x,y) \in\R$.
\end{enumerate}
The observations $y_t$ can be real-valued (GDP growth for one year, maximal temperature of one day), vector-valued (wind-speed, weight and height of persons), functional-valued (path of the exchange rate Euro--Swiss franc over one day), or also set-valued (area of rain on a given day, area affected by a flood). In this article, we focus on \emph{point forecasts} that may be vector-valued, which is why we assume $\mathsf{A} \subseteq \mathbb{R}^k$ for some $k \ge 1$ and we equip the Borel set $\mathsf{A}$ with the Borel $\sigma$-algebra. One is typically interested in a certain statistical property of the underlying (conditional) distribution $F_t$ of $Y_t$. We assume that this property can be expressed in terms of a \emph{functional} $T\colon\F\to\A$ such as the mean, a certain quantile, or a risk measure. Examples of vector-valued functionals are the covariance matrix of a multivariate observation or a vector of quantiles at different levels. Common examples for scoring functions are the absolute loss $S(x,y) = |x-y|$, the squared loss $S(x,y) = (x-y)^2$ (for $\A=\O = \R$), or the absolute percentage loss $S(x,y) = |(x-y)/y|$ (for $\A = \O = (0,\infty)$). 

Forecast comparison is done in terms of \emph{realized scores}
\begin{equation}\label{eq:realscore}
\bar{\mathbf{S}}_n^{(i)} = \frac{1}{n} \sum_{t=1}^n S(x_t^{(i)}, y_t), \qquad i\in\{1, \ldots, m\}.
\end{equation}
That is, a forecaster is deemed to be the better the lower her realized score is. However, there is the following caveat: The forecast ranking in terms of realized scores not only depends on the forecasts and the realizations (as it should definitely be the case), but also on the choice of the scoring function. In order to avoid impure possibilities of manipulating the forecast ranking \emph{ex post} with the data at hand, it is necessary to specify a certain scoring function before the inspection of the data. \emph{A fortiori}, for the sake of transparency and in order to encourage truthful forecasts, one ought to disclose the choice of the scoring function to the competing forecasters \emph{ex ante}. But still, the optimal choice of the scoring function remains an open problem. One can think of two situations:
\begin{enumerate}[(i)]
\item
A decision-maker might be aware of his actual economic costs of utilizing misspecified forecasts. In this case, the scoring function should reflect these economic costs.
\item
The actual economic costs might be unclear and the scoring function might be just a tool for forecast ranking. However, the directive is given in terms of the functional $T\colon \F\to\A$ one is interested in.
\end{enumerate}

For situation (i) described above, one should use the readily economically interpretable cost or scoring function. Therefore, the only concern is situation (ii).    In this paper, we consider predictions in a one-period setting, thus, dropping the index $t$. This is justified by our objectives to understand the properties of scoring functions $S$ which do not change over time and is common in the literature \citep{MurphyDaan1985, DieboldMariano1995,Lambert2008,Gneiting2011}.

Assuming the forecasters are \emph{homines oeconomici} and adopting the rationale of expected utility maximization, given a concrete scoring function $S$, the most sensible action consists in minimizing the expected score $\E_F S(x,Y)$ with respect to the forecast $x$, where $Y$ follows the distribution $F$, thus issuing the Bayes act $\argmin_{x\in\A} \E_F S(x,Y)$. Hence, a scoring function should be incentive compatible in that it encourages truthful and honest forecasts. In line with \cite{MurphyDaan1985} and \cite{Gneiting2011}, we make the following definition.
\begin{defn}[Consistency and elicitability]\label{def:consistency}
A \textit{scoring function} is a map $S\colon \A\times \O \to \R$ that is $\F$-integrable.\footnote{We say that a function $a\colon\O\to\R$ is $\F$-integrable if it is $F$-integrable for each $F\in\F$. A function $g\colon \A \times \O \to \R$ is $\F$-integrable if $g(x,\cdot)$ is $\F$-integrable for each $x\in \A$.}
It is \textit{$\F$-consistent} for a functional $T\colon \F \to \A$ if
\begin{equation}\label{eq:1}
\bar{S}(T(F),F)\le \bar{S}(x,F)
\end{equation}
for all $F\in \F$ and for all $x\in \A$, where $\bar{S}(x,F):=\E_F S(x,Y)$. It is \textit{strictly $\F$-consistent} for $T$ if it is $\F$-consistent for $T$ and if equality in \eqref{eq:1} implies $x = T(F)$.
A functional $T\colon \F\to \A$ is called \textit{elicitable}, if there exists a strictly $\F$-consistent scoring function for $T$.
\end{defn}
Clearly, elicitability and consistent scoring functions are naturally linked also to estimation problems, in particular, M-estimation \citep{Huber1964, HuberRonchetti2009} and regression with prominent examples being ordinary least squares, quantile, or expectile regression \citep{Koenker2005, NeweyPowell1987}. 

The necessity of utilizing strictly consistent scoring functions for meaningful forecast comparison is impressively demonstrated in terms of a simulation study in \cite{Gneiting2011}. However, for a given functional $T\colon\F \to \A$, there is typically a whole class of strictly consistent scoring functions for it, such as all Bregman functions in case of the mean \citep{Savage1971}; further examples are given below. \citet{Patton2016} shows that the forecast ranking based on \eqref{eq:realscore} may depend on the choice of the strictly consistent scoring function for $T$ in finite samples, and even at the population level if we compare two imperfect forecasts with each other. 

Therefore, we naturally have a threefold \emph{elicitation problem}:
\begin{enumerate}[(i)]
\item
Is $T$ elicitable?
\item
What is the class of strictly $\F$-consistent scoring functions for $T$?
\item
What are \emph{distinguished} strictly $\F$-consistent scoring functions for $T$? 
\end{enumerate}
Even though the denomination and the synopsis of the described problems under the term `elicitation problem' are novel, there is a rich strand of literature in mathematical statistics and economics concerned with the threefold elicitation problem. Foremost, one should mention the pioneering work of \cite{Osband1985}, establishing a necessary condition for elicitability in terms of convex level sets of the functional, and a necessary representation of strictly consistent scoring functions, known as \emph{Osband's principle} \citep{Gneiting2011}. 
Whereas the necessity of convex level sets holds in broad generality, \cite{Lambert2013} could specify sufficient conditions for elicitability for functionals taking values in a finite set, and \cite{SteinwartPasinETAL2014} showed sufficiency of convex level sets for real-valued functionals satisfying certain regularity conditions. Moments, ratios of moments, quantiles, and expectiles are in general elicitable, whereas other important functionals such as variance, Expected Shortfall or the mode functional are not \citep{Savage1971, Osband1985, Weber2006, Gneiting2011, Heinrich2014}. 

Concerning subproblem (ii) of the elicitation problem, \cite{Savage1971}, \cite{ReichelsteinOsband1984}, \cite{Saerens2000}, and \cite{BanerjeeGuoETAL2005} gave characterizations for strictly consistent scoring functions for the mean functional of a one-dimensional random variable in terms of Bregman functions. Strictly consistent scoring functions for quantiles have been characterized by \cite{Thomson1979} and \cite{Saerens2000}. \cite{Gneiting2011} provides a characterization of the class of strictly consistent scoring functions for expectiles. The case of vector-valued functionals apart from means of random vectors has been treated substantially less than the one-dimensional case \citep{Osband1985, BanerjeeGuoETAL2005, Lambert2008, FrongilloKash2014b, FrongilloKash2015, FisslerZiegel2016}.

The strict consistency of $S$ only justifies a comparison of two competing forecasts if one of them reports the true functional value. If both of them are misspecified, it is \textit{per se} not possible to draw a conclusion which forecast is `closer' to the true functional value by comparing the realized scores. To this end, some notions of order-sensitivity are desirable. According to \cite{Lambert2013} we say that a scoring function $S$ is \emph{$\F$-order-sensitive} for a one-dimensional functional $T\colon \F\to\A\subseteq \R$ if for any $F\in\F$ and any $x,z\in\A$ such that either $z\le x \le T(F)$ or $z\ge x\ge T(F)$, then $\bar S(x,F)\le \bar S(z,F)$. This means, if a forecast lies between the true functional value and some other forecast, then issuing the forecast in-between should yield a smaller expected score than issuing the forecast further away. In particular, order-sensitivity implies consistency. \emph{Vice versa}, under weak regularity conditions on the functional, strict consistency also implies order-sensitivity if the functional is real-valued; see \citet[Proposition 3]{Nau1985}, \citet[Proposition 2]{Lambert2013}, \citet[Proposition 3.4]{BelliniBignozzi2015}. 

This article is dedicated to a thorough investigation of order-sensitive scoring functions for vector-valued functionals, thus contributing to a discussion of subproblem (iii) of the elicitation problem. Furthermore, we investigate to which extent invariance or equivariance properties of elicitable functionals are reflected in their respective consistent scoring functions. 

\cite{Lambert2008} introduced a notion of componentwise order-sensitivity for the case of $\A\subseteq\R^k$. \cite{Friedman1983} and \cite{Nau1985} considered similar questions in the setting of probabilistic forecasts, coining the term of effectiveness of scoring rules which can be described as order-sensitivity in terms of a metric. In Section \ref{sec:order-sensitivity}, we consider three notions of order-sensitivity in the higher-dimensional setting: metrical order-sensitivity, componentwise order-sensitivity, and order-sensitivity on line segments. We discuss their connections and give conditions when such scoring functions exist and of what form they are for the most relevant functionals, such as vectors of quantiles, expectiles, ratios of expectations, the pair of mean and variance, and the pair consisting of Value at Risk and Expected Shortfall, two important risk measures in banking and insurance.


Complementing our results on order-sensitivity, in Section \ref{sec:analytic}, we consider the analytic properties of the expected score $x\mapsto \bar S(x,F)$, $x\in\A \subseteq \mathbb{R}^k$, for some scoring function $S$ and some distribution $F\in\F$. The (strict) consistency of $S$ for some functional $T$ is equivalent the expected score having a (unique) global minimum at $x=T(F)$. Order-sensitivity ensures monotonicity properties of the expected score. 
As a technical result, we show that under weak regularity assumptions on $T$, the expected score of a strictly consistent scoring function has a unique local minimum -- which, of course, coincides with the global minimum at $x=T(F)$. Accompanied with a result on self-calibration, a continuity property of the inverse of the expected score, which ensures that the minimum of the expected score is well-separated in the sense of \cite{Vaart1998}, these two findings may be of interest on their own right in the context of M-estimation.

In Section 4, we consider functionals that have an invariance or equivariance property such as translation invariance or homogeneity. It is a natural question whether a functional $T$ that is, for example, translation equivariant has a consistent scoring function that respects this property in the sense that if we evaluate forecast performance of translated predictions and observations, the ranking of predictive performance remains the same as that of the original data. In parametric estimation problems, such a scoring functions may allow to translate the data without affecting the estimated parameter values. For one-dimensional functionals, invariance of the scoring function often determines it uniquely up to equivalence while this is not necessarily the case for higher-dimensional functionals (Proposition \ref{prop:translation invariance} and Corollary \ref{cor:EVarHOM}).

\section{Analytic properties of expected scores}\label{sec:analytic}

\subsection{Monotonicity}

\begin{defn}[Mixture-continuity]\label{defn:Mixture-Continuity}
Let $\F$ be convex. A functional $T\colon \F\to\A\subseteq \R^k$ is called \emph{mixture-continuous} if for all $F,G\in \F$ the map
\[
[0,1]\to\R, \quad \lambda \mapsto T((1-\lambda)F + \lambda G)
\]
is continuous.
\end{defn}

It is appealing that one does not have to specify a topology on $\F$ to define mixture-continuity because it suffices to work with the induced Euclidean topology on $[0,1]$ and on $\A\subseteq \R^k$. 

It turns out that mixture-continuity of a functional is strong enough to imply order-sensitivity in the case of one-dimensional functionals (see \citet[Proposition 3]{Nau1985}, \citet[Proposition 2]{Lambert2013}, \citet[Proposition 3.4]{BelliniBignozzi2015}), and desirable monotonicity properties of the expected scores also in higher dimensions (Propositions \ref{prop:decr paths} and \ref{prop:no local minimum}). At the same time, numerous functionals of applied relevance are mixture-continuous, and we start by giving examples and a sufficient condition (Proposition \ref{prop:mixture-continuity}).

It is straight forward to see that the ratio of expectations is mixture-continuous. Moreover, by the implicit function theorem, one can verify the mixture-continuity of quantiles and expectiles directly under appropriate regularity conditions (e.g.,~in the case of quantiles, all distributions in $\F$ should be $C^1$ with non-vanishing derivatives). Generalizing \citet[Proposition 3.4c]{BelliniBignozzi2015}, we give a sufficient criterion for mixture-continuity in the next proposition. Our version is not restricted to distributions with compact support (however, the image of the functional must be bounded), and we formulate the result for $k$-dimensional functionals.

\begin{prop}\label{prop:mixture-continuity}
Let $T\colon \F\to \R^k$ be an elicitable functional with a strictly $\F$-consistent scoring function $S\colon \R^k\times \O \to\R$ such that $\bar S(\cdot, F)$ is continuous for all $F\in \F$. Then $T$ is mixture-continuous on any $\F_0\subseteq \F$ such that $\F_0$ is convex and the image $T(\F_0)$ is bounded.
\end{prop}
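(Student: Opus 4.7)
The strategy is proof by contradiction via a standard compactness argument, exploiting that $\bar S(x,F_\lambda)$ is affine in $\lambda$ for fixed $x$ and continuous in $x$ for fixed $\lambda$.

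Fix $F,G\in \F_0$ and write $F_\lambda = (1-\lambda)F + \lambda G$, which lies in $\F_0$ by convexity. Set $t(\lambda) := T(F_\lambda)$. The linearity of the integral gives
\[
\bar S(x, F_\lambda) = (1-\lambda)\bar S(x,F) + \lambda \bar S(x,G),
\]
so $\lambda \mapsto \bar S(x,F_\lambda)$ is affine (hence continuous) on $[0,1]$ for every fixed $x\in\R^k$, while $x\mapsto \bar S(x,F_\lambda)$ is continuous by the assumption on $\bar S(\cdot,F)$ and $\bar S(\cdot, G)$.

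Suppose, for contradiction, that $t$ fails to be continuous at some $\lambda_0\in[0,1]$. Then there exist $\eps>0$ and a sequence $\lambda_n\to \lambda_0$ with $|t(\lambda_n)-t(\lambda_0)|\ge \eps$ for all $n$. Since $t(\lambda_n)\in T(\F_0)$ and $T(\F_0)$ is bounded in $\R^k$, Bolzano--Weierstrass yields a subsequence, which I relabel as $(\lambda_n)$, such that $t(\lambda_n)\to x^*$ for some $x^*\in \R^k$ with $|x^*-t(\lambda_0)|\ge \eps$, and in particular $x^*\neq t(\lambda_0)$. By strict $\F$-consistency applied at $F_{\lambda_n}\in\F_0\subseteq \F$,
\[
\bar S\bigl(t(\lambda_n), F_{\lambda_n}\bigr) \le \bar S\bigl(t(\lambda_0), F_{\lambda_n}\bigr).
\]

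The plan is now to pass to the limit $n\to \infty$ on both sides. For the right-hand side, affinity in $\lambda$ immediately gives $\bar S(t(\lambda_0),F_{\lambda_n}) \to \bar S(t(\lambda_0),F_{\lambda_0})$. For the left-hand side, I use the decomposition and continuity of $\bar S(\cdot,F)$ and $\bar S(\cdot,G)$ at $x^*$:
\[
\bar S\bigl(t(\lambda_n), F_{\lambda_n}\bigr) = (1-\lambda_n)\bar S\bigl(t(\lambda_n),F\bigr) + \lambda_n \bar S\bigl(t(\lambda_n),G\bigr) \longrightarrow \bar S(x^*, F_{\lambda_0}).
\]
Passing to the limit in the inequality yields $\bar S(x^*, F_{\lambda_0}) \le \bar S(t(\lambda_0), F_{\lambda_0})$, which, since $x^*\neq t(\lambda_0) = T(F_{\lambda_0})$, contradicts strict $\F$-consistency of $S$ at $F_{\lambda_0}$.

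The only mildly delicate step is making sure the convergence in the two variables $(t(\lambda_n),\lambda_n) \to (x^*,\lambda_0)$ produces the right limit; this is where the decomposition into $\bar S(\cdot,F)$ and $\bar S(\cdot,G)$ is essential, since it reduces a joint-continuity question in $(x,\lambda)$ to the pointwise continuity in $x$ that is assumed. Boundedness of $T(\F_0)$ is used only to extract a subsequential limit $x^*\in \R^k$; without it, the $t(\lambda_n)$ could escape to infinity and no contradiction would be available.
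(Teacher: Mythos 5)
Your argument is correct, and it takes a genuinely different route from the paper. The paper's proof defines $h_{F,G}(x,\lambda) = (1-\lambda)\bar S(x,F) + \lambda\bar S(x,G)$ on the compact box $[-C,C]^k\times[0,1]$ (using that $T(\F_0)\subseteq[-C,C]^k$), notes that strict consistency makes $T((1-\lambda)F+\lambda G)$ the unique minimizer, and then invokes the Berge Maximum Theorem to conclude continuity of the single-valued argmin in $\lambda$. You instead argue by contradiction: Bolzano--Weierstrass on the bounded sequence $t(\lambda_n)$, the consistency inequality $\bar S(t(\lambda_n),F_{\lambda_n})\le\bar S(t(\lambda_0),F_{\lambda_n})$, and a passage to the limit justified by the affine decomposition in $\lambda$ together with continuity of $\bar S(\cdot,F)$ and $\bar S(\cdot,G)$, which clashes with \emph{strict} consistency at $F_{\lambda_0}$ since $x^*\neq t(\lambda_0)$. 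In effect you reprove the special case of Berge's theorem that is needed, which makes your proof self-contained and makes transparent exactly where boundedness (extraction of $x^*$) and strictness (the final contradiction) enter, at the cost of a longer argument; the paper's version is shorter but rests on an external maximum-theorem citation. One point you handle implicitly and should be aware of: the subsequential limit $x^*$ need not lie in $T(\F_0)$, only in its closure, so you need $x^*$ to be an admissible argument of $\bar S(\cdot,F_{\lambda_0})$ and of the strict-consistency comparison; this is fine here precisely because the action domain is all of $\R^k$ (as in the statement), whereas for a general, non-closed $\A$ this step would require extra care of the kind the paper's compact-box formulation sidesteps.
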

\begin{proof}
Let $\F_0\subseteq \F$ be convex such that $T(\F_0)\subseteq [-C,C]^k$ for some $C>0$. Let $F,G\in\F_0$. Define $h_{F,G} \colon [-C,C]^k \times [0,1]\to\R$ via
\[
h_{F,G}(x,\lambda) = \bar S(x,(1-\lambda)F + \lambda G) = (1-\lambda)\bar S(x,F) + \lambda \bar S(x,G).
\]
Then $h_{F,G}$ is jointly continuous, and due to the strict consistency 
\[
T((1-\lambda) F + \lambda G) = \argmin_{x\in[-C, C]^k} h_{F,G}(x,\lambda). 
\]
By virtue of the Berge Maximum Theorem \citep[Theorem 17.31 and Lemma 17.6]{AliprantisBorder2006}, the function $\lambda\mapsto \argmin_{x\in[-C, C]^k} h_{F,G}(x,\lambda)$
is continuous.
\end{proof}

Similarly to the original proof of \cite{BelliniBignozzi2015}, a sufficient criterion for the continuity of $\bar S(\cdot, F)$ for any $F\in\F$ is that for all $y\in\O$, the score $S(x,y)$ is quasi-convex and continuous in $x$.\footnote{We remark that for $\A\subseteq \R$, if a scoring function $S$ is strictly $\F_p$-consistent for some functional $T\colon\F_p\to\A$ where $\F_p = \{\delta_y\colon y\in\O\}$ consists of all point measures on $\O$, then the quasi-convexity of $x\mapsto S(x,y)$ for all $y\in\O$ is equivalent to the $\F_p$-order-sensitivity of $S$ for $T$.}

Recall that, under appropriate regularity conditions on $\F$, the asymmetric piecewise linear loss $S_\alpha(x,y) = (\one\{y\le x\} - \alpha)(x-y)$ and the asymmetric piecewise quadratic loss $S_\tau(x,y) = |\one\{y\le x\} - \tau|(x-y)^2$ are strictly consistent scoring functions for the $\alpha$-quantile and the $\tau$-expectile, respectively, and both, $S_\alpha$ as well as $S_\tau$, are continuous in their first argument and convex. Hence, Proposition \ref{prop:mixture-continuity} yields that both quantiles and expectiles are mixture-continuous.

\cite{SteinwartPasinETAL2014} used Osband's principle \citep{Osband1985} and the assumption of continuity of $T$ with respect to the total variation distance to show order-sensitivity. \cite{BelliniBignozzi2015} showed that the weak continuity of a functional $T$ implies its mixture-continuity. Consequently, one can also derive the order-sensitivity in the framework of \cite{SteinwartPasinETAL2014} directly using only mixture-continuity.

\cite{Lambert2013} showed that it is a harder requirement to have order-sensitivity if $T(\F)$ is discrete. Then both approaches, invoking Osband's principle or using mixture-continuity, do not work because the interior of the image of $T$ is empty. Moreover, mixture-continuity implies that the functional is constant (such that only trivial cases can be considered). Furthermore, it is proven in \cite{Lambert2013} that for a functional $T$ with a discrete image, all strictly consistent scoring functions are order-sensitive if and only if there is one order-sensitive scoring function for $T$.
In particular, there are functionals admitting strictly consistent scoring functions that are not order-sensitive, one such example being the mode functional.\footnote{Note that due to Proposition 1 in \cite{Heinrich2014}, the mode functional is elicitable relative to the class of probability measure $\F$ containing unimodal discrete measures. Moreover, interpreting the mode functional as a set-valued functional, it is elicitable in the sense of \citet[Definition 2]{Gneiting2011}. A strictly $\F$-consistent scoring function is given by $S(x,y) = \one\{x\neq y\}$. The main result of \cite{Heinrich2014} is that the mode functional is \emph{not} elicitable relative to the class $\F$ of unimodal probability measures with Lebesgue densities.}


Let us turn attention to vector-valued functionals now. To understand the monotonicity properties of the expected score of a mixture-continuous elicitable functional $T\colon\F \to \A\subseteq\R^k$, it is useful to consider paths $\gamma\colon [0,1]\to\A\subseteq \R^k$, $\gamma(\lambda) = T(\lambda F + (1-\lambda)G)$ for $F,G\in\F$. If $T$ is elicitable, a classical result asserts that $T$ necessarily has convex level sets \cite[Theorem 6]{Gneiting2011}. This implies that the level sets of $\gamma$ can only be closed intervals including the case of singletons and the empty set. This rules out loops and some other possible pathologies of $\gamma$. Furthermore, under the assumption that $T$ is \emph{identifiable} as defined below, one can even show that the path $\gamma$ is either injective or constant. 

\begin{defn}[Identifiability]
Let $\A\subseteq\R^k$.
An $\F$-integrable function $V\colon \A\times \O \to \R^k$ is said to be an \textit{$\F$-identification function} for a functional $T\colon \F \to \A\subseteq \R^k$ if
\[
\bar V(T(F),F)=0
\]
for all $F\in\F$. Furthermore, $V$ is a \textit{strict $\F$-identification function} for $T$ if $\bar V(x,F) = 0$ implies $x = T(F)$ for all $F\in \F$ and for all $x\in \A$. A functional $T\colon \F\to \A\subseteq\R^k$ is said to be \textit{identifiable}, if there exists a strict $\F$-identification function for $T$.
\end{defn}
In line with \citet[Section 2.4]{Gneiting2011}, one can often obtain an identification function as the gradient of a sufficiently smooth scoring function. However, the converse intuition is not so clear -- at least in the higher dimensional setting $k>1$: Not all strict identification functions can be integrated to a strictly consistent scoring function. They have to satisfy the usual integrability conditions \cite[p.\ 185]{Koenigsberger2004}; see also \citet[Corollary 3.3]{FisslerZiegel2016} and the discussion thereafter.

\begin{lem}\label{lem:injective paths}
Let $\F$ be convex and $T\colon\F\to\A\subseteq \R^k$ be identifiable with a strict $\F$-identification function $V\colon \A\times\O\to\R^k$. Then for any $F,G\in\F$, the path $\gamma\colon [0,1]\to\A$, $\gamma(\lambda) = T(\lambda F + (1-\lambda)G)$, is either constant or injective.
\end{lem}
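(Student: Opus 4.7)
The plan is to exploit the linearity of the expected identification function $\bar V(x,\cdot)$ in its second argument, combined with the strictness of the identification function, to show that any two $\lambda$'s mapping to the same point force $\gamma$ to be constant.

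First, I would set notation: for $\lambda\in[0,1]$ write $H_\lambda := \lambda F + (1-\lambda)G$, which lies in $\F$ by convexity. The defining property of $\gamma$ together with $V$ being an $\F$-identification function gives
\[
\bar V(\gamma(\lambda), H_\lambda) \;=\; 0 \qquad \text{for every } \lambda\in[0,1].
\]
The crucial observation is that for any fixed $x\in\A$, the map $\lambda\mapsto \bar V(x,H_\lambda)$ is affine, since $\bar V(x,H_\lambda) = \lambda \bar V(x,F) + (1-\lambda)\bar V(x,G)$ by linearity of the expectation.

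Next, I would assume $\gamma$ is not injective and deduce that it is constant. Pick $\lambda_1\neq \lambda_2$ in $[0,1]$ with $\gamma(\lambda_1)=\gamma(\lambda_2)=:x_0$. Then both equations
\[
\lambda_i \bar V(x_0,F) + (1-\lambda_i)\bar V(x_0,G) = 0, \qquad i=1,2,
\]
hold. Subtracting and using $\lambda_1\neq\lambda_2$ gives $\bar V(x_0,F)=\bar V(x_0,G)$, and plugging back yields $\bar V(x_0,F)=\bar V(x_0,G)=0$. Hence, by affineness in $\lambda$,
\[
\bar V(x_0,H_\lambda) = \lambda \bar V(x_0,F) + (1-\lambda)\bar V(x_0,G) = 0 \qquad \text{for every } \lambda\in[0,1].
\]
Strictness of the identification function $V$ then forces $x_0 = T(H_\lambda)=\gamma(\lambda)$ for all $\lambda\in[0,1]$, i.e.\ $\gamma$ is constant.

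There is no real obstacle here; the argument is a direct two-line manipulation once one notices the affineness in $\lambda$ and applies strict identifiability. The only subtlety worth flagging is the role of convexity of $\F$, which guarantees that the mixtures $H_\lambda$ actually belong to the domain on which $V$ is a strict identification function, so that the final implication $\bar V(x_0,H_\lambda)=0 \Rightarrow x_0 = T(H_\lambda)$ is legitimate for every $\lambda\in[0,1]$.
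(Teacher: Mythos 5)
Your proof is correct and rests on exactly the same ingredients as the paper's: affineness of $\lambda\mapsto\bar V(x,\lambda F+(1-\lambda)G)$ together with strict identifiability, with convexity of $\F$ guaranteeing that each mixture lies in the domain. The only difference is organizational -- you run one direct argument (two values of $\lambda$ pin the affine map to zero), whereas the paper splits into the cases $T(F)=T(G)$ and $T(F)\neq T(G)$ and, in the latter, rewrites $F$ and $G$ as affine combinations of the two mixtures; this is a reformulation of the same computation, not a different method.
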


\begin{proof}
Let $F,G\in\F$ such that $t=T(F) = T(G)$. For any $\lambda\in[0,1]$, one has $\bar V(t,\lambda F + (1-\lambda)G) = \lambda \bar V(t,F) + (1-\lambda)\bar V(t,G) = 0$. Since $V$ is a strict $\F$-identification function for $T$, $t = \gamma(\lambda)$ for all $\lambda \in[0,1]$. 

Now let $T(F) \neq T(G)$ and let $0\le \lambda<\lambda'\le 1$. Since $V$ is a strict $\F$-identification function, $\bar V(T(F),G)\neq0$ (and symmetrically $\bar V(T(G),F)\neq0$.) Assume that $\gamma(\lambda) = \gamma(\lambda')$. Define $H_\lambda = \lambda F + (1-\lambda) G$, $H_{\lambda'} = \lambda' F + (1-\lambda') G$. 
There are $\mu, \mu'\in\R$ such that $F = \mu H_\lambda + (1-\mu)H_{\lambda'}$ and $G = \mu' H_\lambda + (1-\mu')H_{\lambda'}$. Hence,
\[
\bar V(\gamma(\lambda),F) = \mu\bar V(\gamma(\lambda),H_{\lambda}) + (1-\mu)\bar V(\gamma(\lambda),H_{\lambda'}) = 0,
\]
and similarly
\(
\bar V(\gamma(\lambda),G) = 0.
\)
Consequently, $T(F) = \gamma(\lambda) = T(G)$, which is a contradiction to the assumption that $T(F) \neq T(G)$. This implies that $\gamma(\lambda) \neq \gamma(\lambda')$.
\end{proof}

\begin{prop}\label{prop:decr paths}
Let $\F$ be convex and $T\colon\F\to\A\subseteq \R^k$ be mixture-continuous and surjective. Let $S\colon \A\times \O\to\R$ be strictly $\F$-consistent for $T$. Then for each $F\in \F$, $t=T(F)$ and each $x\in\A$, $x\neq t$ there is a continuous path $\gamma\colon [0,1]\to\A$ such that $\gamma(0) = x$, $\gamma(1)=t$, and the function $[0,1]\ni\lambda \mapsto \bar S(\gamma(\lambda),F)$ is decreasing. Additionally, for $0\le \lambda<\lambda'\le 1$ such that $\gamma (\lambda )\neq \gamma(\lambda')$ it holds that $\bar S(\gamma(\lambda),F)>\bar S(\gamma(\lambda'),F)$.
\end{prop}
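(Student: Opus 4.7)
The plan is to build $\gamma$ as the $T$-image of a line segment in $\F$ running from a preimage of $x$ to $F$, and then extract monotonicity from two applications of (strict) consistency. By surjectivity of $T$ there is $G\in\F$ with $T(G)=x$; I set $H_\lambda:=(1-\lambda)G+\lambda F$ (which lies in $\F$ by convexity) and $\gamma(\lambda):=T(H_\lambda)$. The endpoints $\gamma(0)=x$ and $\gamma(1)=t$ are then automatic, and continuity of $\gamma$ is exactly mixture-continuity of $T$ applied to the pair $(F,G)$.

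To handle the monotonicity of $\lambda\mapsto\bar S(\gamma(\lambda),F)$, I fix $0\le\lambda<\lambda'\le 1$ and apply $\F$-consistency of $S$ at the two distributions $H_\lambda$ and $H_{\lambda'}$, using $\gamma(\lambda')$ and $\gamma(\lambda)$ respectively as the (possibly suboptimal) test actions. Since $\bar S(\,\cdot\,,H_\mu)$ is affine in $\mu$, these two inequalities can be rewritten as
\[
0\le (1-\lambda)a+\lambda b \qquad\text{and}\qquad (1-\lambda')a+\lambda' b\le 0,
\]
where $a:=\bar S(\gamma(\lambda'),G)-\bar S(\gamma(\lambda),G)$ and $b:=\bar S(\gamma(\lambda'),F)-\bar S(\gamma(\lambda),F)$.

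The key step, and the main (small) obstacle, is to eliminate $a$ so as to read off the sign of $b$: multiplying the first inequality by $(1-\lambda')\ge 0$ and the second by $(1-\lambda)>0$ and subtracting cancels $a$ and leaves $(\lambda'-\lambda)b\le 0$, hence $b\le 0$, which is the weak decrease required. For the strict part, whenever $\gamma(\lambda)\neq\gamma(\lambda')$ the test actions above differ from the respective Bayes acts $T(H_\lambda)=\gamma(\lambda)$ and $T(H_{\lambda'})=\gamma(\lambda')$, so strict consistency of $S$ upgrades both displayed inequalities to strict. The same linear combination then yields $b<0$ whenever both multipliers $(1-\lambda')$ and $(1-\lambda)$ are strictly positive, and the remaining corner $\lambda'=1$ is handled directly, since the second inequality reduces to $b<0$.
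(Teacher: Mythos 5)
Your proposal is correct and follows essentially the same route as the paper: the path $\gamma(\lambda)=T((1-\lambda)G+\lambda F)$ with $T(G)=x$ is exactly the paper's construction, and your monotonicity argument uses the same two ingredients -- (strict) consistency applied at $H_\lambda$ and at $H_{\lambda'}$ together with affinity of $\mu\mapsto\bar S(\cdot,H_\mu)$. The only difference is bookkeeping: you expand both inequalities in barycentric coordinates with respect to $(G,F)$ and eliminate $a$, whereas the paper writes $H_{\lambda'}$ as a mixture of $F$ and $H_\lambda$ and rearranges, which is the same linear-elimination step in disguise.
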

\begin{proof}
Let $F\in\F$, $t=T(F)$ and $x\neq t$. Then there is some $G\in\F$ with $x=T(G)$. Define $\gamma\colon [0,1]\to \A, \quad \lambda \mapsto T(\lambda F +(1-\lambda) G)$. Clearly, $\gamma(0)=x$ and $\gamma(1)=t$. Due to the mixture-continuity of $T$, the path $\gamma$ is also continuous. The rest follows along the lines of the proof of \citet[Proposition 3]{Nau1985}.
Let $0\le \lambda<\lambda'\le 1$. If $\gamma(\lambda) = \gamma(\lambda')$, there is nothing to show. So assume that $\gamma(\lambda) \neq \gamma(\lambda')$.
Define $H_\lambda = \lambda F + (1-\lambda)G$, and $H_{\lambda'}$ analogously. Then, for $\mu := (\lambda'- \lambda)/(1-\lambda)\in(0,1]$, it holds that $H_{\lambda'}= \mu F +(1-\mu) H_\lambda$. The strict consistency of $S$ implies that 
\begin{multline*}
\mu \bar S(\gamma(\lambda'), F) + (1-\mu)\bar S(\gamma(\lambda'),H_\lambda) 
= \bar S(\gamma(\lambda'),H_{\lambda'}) \\
< \bar S(\gamma(\lambda),H_{\lambda'})
=\mu \bar S(\gamma(\lambda), F) + (1-\mu)\bar S(\gamma(\lambda),H_\lambda) \,,
\end{multline*}
which is equivalent to
\[
\frac{1-\mu}{\mu} \big( \bar S(\gamma(\lambda'),H_\lambda) -\bar S(\gamma(\lambda),H_\lambda) \big)
<\bar S(\gamma(\lambda), F) - \bar S(\gamma(\lambda'), F)\,.
\]
By strict consistency of $S$, the left-hand side is non-negative yielding the assertion.
\end{proof}
\begin{rem}\begin{enumerate}
\item[(i)]
Proposition \ref{prop:decr paths} remains valid if $S$ is only $\F$-consistent. Then, we merely have that the function $[0,1]\ni\lambda \mapsto \bar S(\gamma(\lambda),F)$ is decreasing, so the last inequality in Proposition \ref{prop:decr paths} is not necessarily strict. 
\item[(ii)]
If one assumes in Proposition \ref{prop:decr paths} that $T$ is also identifiable, one can use the injectivity of $\gamma$ implied by Lemma \ref{lem:injective paths} to see that the function $[0,1]\ni \lambda \mapsto \bar S(\gamma(\lambda), F)$ is strictly decreasing.
\end{enumerate}
\end{rem}

Under certain (weak) regularity conditions, the expected scores of a strictly consistent scoring function has no other local minimum apart from the global one at $x=T(F)$.

\begin{prop}\label{prop:no local minimum}
Let $\F$ be convex and $T\colon\F\to\A\subseteq \R^k$ be mixture-continuous and surjective. If $S\colon \A\times \O\to\R$ is strictly $\F$-consistent for $T$, then for all $F\in\F$ the expected score $\bar S(\cdot, F)\colon \A\to\R$ has only one local minimum which is at $x=T(F)$. 
\end{prop}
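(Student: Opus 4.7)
The plan is to argue by contradiction, using Proposition \ref{prop:decr paths} to exhibit a descent path from any supposed rival local minimum down to $T(F)$, which must at some point dip below the local-minimum value while still staying inside a prescribed neighbourhood.

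Concretely, fix $F \in \F$, set $t = T(F)$, and suppose for contradiction that $x_0 \in \A$ with $x_0 \neq t$ is a local minimum of $\bar S(\cdot, F)$. Choose an open neighbourhood $U \subseteq \A$ of $x_0$ such that $\bar S(x_0, F) \le \bar S(y, F)$ for all $y \in U$. By Proposition \ref{prop:decr paths} (applicable because $\F$ is convex and $T$ is mixture-continuous and surjective), there is a continuous path $\gamma\colon [0,1] \to \A$ with $\gamma(0) = x_0$, $\gamma(1) = t$, the map $\lambda \mapsto \bar S(\gamma(\lambda), F)$ is decreasing, and strictly decreasing whenever the endpoints $\gamma(\lambda), \gamma(\lambda')$ differ.

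The heart of the argument is then to produce a parameter $\lambda^* \in (0, 1]$ with $\gamma(\lambda^*) \in U$ and $\gamma(\lambda^*) \neq x_0$; once such $\lambda^*$ exists, the strict-decrease clause of Proposition \ref{prop:decr paths} applied to $0 < \lambda^*$ gives $\bar S(\gamma(\lambda^*), F) < \bar S(\gamma(0), F) = \bar S(x_0, F)$, contradicting the local minimum property at $x_0$. To find $\lambda^*$, I set $\lambda_1 = \inf\{\lambda \in [0,1] : \gamma(\lambda) \neq x_0\}$. Since $\gamma(1) = t \neq x_0$, this set is non-empty, and if it were empty then continuity of $\gamma$ would force $\gamma(1) = x_0$, a contradiction; so $\lambda_1 \in [0, 1)$ is well-defined. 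By continuity of $\gamma$ at $\lambda_1$ (and, when $\lambda_1 > 0$, the fact that $\gamma \equiv x_0$ on $[0, \lambda_1]$ forces $\gamma(\lambda_1) = x_0$), for every $\delta > 0$ there is some $\lambda^* \in (\lambda_1, \lambda_1 + \delta)$ with $\gamma(\lambda^*) \neq x_0$ and $\gamma(\lambda^*)$ arbitrarily close to $x_0$, hence inside $U$.

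The only subtle step is the construction of $\lambda^*$ in the two sub-cases $\lambda_1 = 0$ and $\lambda_1 > 0$; everything else is a direct application of Proposition \ref{prop:decr paths}. I expect the main obstacle to be purely notational, namely making sure that the strict inequality in Proposition \ref{prop:decr paths} genuinely applies because $\gamma(0) = x_0 \neq \gamma(\lambda^*)$, rather than any analytic difficulty. A one-line remark will then record that, taken together with strict consistency, $x = T(F)$ is therefore the unique local (and global) minimum of $\bar S(\cdot, F)$.
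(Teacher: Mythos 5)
Your proof is correct and follows essentially the same route as the paper: both argue by contradiction, invoke Proposition \ref{prop:decr paths} to obtain a continuous path from the supposed rival local minimum $x_0$ to $T(F)$ along which the expected score is decreasing (strictly so between distinct points), and conclude that points of the path inside any neighbourhood of $x_0$ yield a strictly smaller expected score. Your explicit construction of $\lambda^*$ via $\lambda_1 = \inf\{\lambda : \gamma(\lambda)\neq x_0\}$ merely spells out a detail the paper leaves implicit (handling a path that is initially constant), and apart from a small wording slip there --- the continuity argument shows $\lambda_1 < 1$ rather than non-emptiness of the set, which already contains $1$ --- the argument is sound.
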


\begin{proof}
Let $F\in\F$ with $t=T(F)$. Due to the strict $\F$-consistency of $S$, the expected score $\bar S(\cdot, F)$ has a local minimum at $t$. Assume there is another local minimum at some $x\neq t$. Then there is a distribution $G\in\F$ with $x=T(G)$. Consider the path $\gamma\colon [0,1]\to \A, \quad \lambda \mapsto T(\lambda F +(1-\lambda) G)$. Due to Proposition \ref{prop:decr paths} the function $\lambda \mapsto \bar S(\gamma (\lambda), F)$ is decreasing and strictly decreasing when we move on the image of the path from $x$ to $t$. Hence $\bar S(\cdot,F)$ cannot have a local minimum at $x = \gamma(0)$.
\end{proof}

\subsection{Self-calibration}\label{subsec:self-calibration}

With Proposition \ref{prop:decr paths} it is possible to prove that, under mild regularity conditions, strictly consistent scoring functions are \emph{self-calibrated} which turns out to be useful in the context of M-estimation.

\begin{defn}[Self-calibration]\label{defn:self-calibration}
A scoring function $S\colon\A\times \O\to\R$ is called $\F$-self-calibrated for a functional $T\colon \F\to\A\subseteq\R^k$ with respect to a norm\footnote{It is straight forward to use a metric instead of a norm on $\A$ but in this article we only consider $\A\subseteq\R^k$, so we did not see any benefit in considering this more general case. See also the discussion before Definition \ref{defn:Metrical order-sensitivity}.} $\|\cdot\|$ on $\A$ if for all $\eps>0$ and for all $F\in\F$ there is a $\delta= \delta(\eps,F)>0$ such that for all $x\in \A$ and $t=T(F)$
\[
\bar S(x,F) - \bar S(t,F) < \delta \quad\implies \quad \|t-x\|<\eps.
\]
\end{defn}

The notion of self-calibration was introduced by \cite{Steinwart2007} in the context of machine learning. In a preprint version of \cite{SteinwartPasinETAL2014},\footnote{Available at
\url{http://users.cecs.anu.edu.au/~williams/papers/P196.pdf}} the authors translate this concept to the setting of scoring functions as follows (using our notation):
\begin{quote}
\small{``For self-calibrated $S$, every $\delta$-approximate minimizer of $\bar S(\cdot, F)$, approximates the desired property $T(F)$ with precision not worse than $\eps$. [\ldots] In some sense order sensitivity is a global and qualitative notion while self-calibration is a local and quantitative notion.''}
\end{quote}
In line with this quotation, self-calibration can be considered as the continuity of the inverse of the expected score $\bar S(\cdot, F)$ at the global minimum $x=T(F)$ -- and as such, it is a local property of the inverse. This property ensures that convergence of the expected score to its global minimum implies convergence of the forecast to the true functional value. 
On the other hand, self-calibration of a scoring function $S$ is equivalent to the fact that the argmin $T(F)$ of the expected score $\bar S(\cdot, F)$ is a \emph{well-separated} point of minimum in the sense of \citet[p.\ 45]{Vaart1998} -- as such being a global property of the expected score itself. That means that for any $\eps>0$
\[
\inf\{\bar S(x,F) \colon \|T(F) - x\| \ge \eps\} > \bar S(T(F),F).
\] 

It is relatively straight forward to see that self-calibration implies strict consistency: Let $S$ be $\F$-self-calibrated for $T$, $F\in\F$, $t=T(F)$ and $x\in\A$ with $x\neq t$. Then for $\eps:=\|x-t\|/2>0$ there is a $\delta>0$ such that $\bar S(x,F) - \bar S(t,F)\ge\delta>0$.

In the preprint version of \cite{SteinwartPasinETAL2014} it is shown for $k=1$ that order-sensitivity implies self-calibration. The next Proposition shows that the kind of order-sensitivity given by Proposition \ref{prop:decr paths} also implies self-calibration for $k\ge1$.

\begin{prop}\label{prop:self-calibration}
Let $\F$ be convex, $\A\subseteq\R^k$ be closed, and $T\colon \F\to\A$ be a surjective and mixture-continuous functional. If $S\colon\A\times \O\to\R$ is strictly $\F$-consistent for $T$ and $\bar S(\cdot,F)\colon \A\to\R$ is continuous for all $F\in\F$, then $S$ is $\F$-self-calibrated for $T$.
\end{prop}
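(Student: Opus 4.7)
The plan is to argue by contradiction via the well-separation characterization of self-calibration mentioned after Definition~\ref{defn:self-calibration}. Suppose $S$ is not $\F$-self-calibrated for $T$. Then there exist $\eps>0$, $F\in\F$ with $t=T(F)$, and a sequence $(x_n)\subseteq \A$ with $\|x_n-t\|\ge \eps$ for all $n$ and $\bar S(x_n,F)-\bar S(t,F)\to 0$ as $n\to\infty$. The goal is to derive a contradiction to strict consistency by producing a point $y^*\in \A$ with $y^*\neq t$ and $\bar S(y^*,F)=\bar S(t,F)$.

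The difficulty is that the set $\{x\in\A:\|x-t\|\ge\eps\}$ may be unbounded, so I cannot extract a convergent subsequence from $(x_n)$ directly. My remedy is to use Proposition~\ref{prop:decr paths} to ``walk back'' each $x_n$ toward $t$ along a path on which the expected score is decreasing, stopping at a fixed sphere centered at $t$. Concretely, using surjectivity of $T$, pick $G_n\in\F$ with $T(G_n)=x_n$ and, as in Proposition~\ref{prop:decr paths}, consider the continuous path $\gamma_n\colon[0,1]\to\A$ given by $\gamma_n(\lambda)=T(\lambda F+(1-\lambda)G_n)$, which satisfies $\gamma_n(0)=x_n$, $\gamma_n(1)=t$, and on which $\lambda\mapsto\bar S(\gamma_n(\lambda),F)$ is decreasing. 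The continuous map $\lambda\mapsto\|\gamma_n(\lambda)-t\|$ takes the values $\|x_n-t\|\ge \eps$ at $\lambda=0$ and $0$ at $\lambda=1$, so by the intermediate value theorem there is $\lambda_n\in[0,1]$ with $\|\gamma_n(\lambda_n)-t\|=\eps/2$. Set $y_n:=\gamma_n(\lambda_n)\in\A$.

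Monotonicity along $\gamma_n$ gives $\bar S(t,F)\le \bar S(y_n,F)\le\bar S(x_n,F)$, where the lower bound is $\F$-consistency of $S$. Passing to the limit, $\bar S(y_n,F)\to\bar S(t,F)$. The key gain is that now the sequence $(y_n)$ lies in the set
\[
K := \{y\in\A : \|y-t\|=\eps/2\},
\]
which is closed in $\R^k$ (as $\A$ is closed and the sphere is closed), bounded, hence compact by Heine--Borel. Extract a subsequence $y_{n_j}\to y^*\in K$, so in particular $\|y^*-t\|=\eps/2>0$, i.e., $y^*\neq t$.

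Finally, the continuity of $\bar S(\cdot,F)$ yields $\bar S(y^*,F)=\lim_j \bar S(y_{n_j},F)=\bar S(t,F)$, contradicting the strict $\F$-consistency of $S$ for $T$. Hence $S$ is $\F$-self-calibrated for $T$. The main obstacle in this argument is exactly the potential unboundedness of the ``bad'' set $\{\|x-t\|\ge\eps\}$; Proposition~\ref{prop:decr paths}, combined with surjectivity and mixture-continuity, is what allows us to trade an arbitrary far-away minimizing sequence for one on a compact sphere, after which the standard continuity/compactness argument closes the loop.
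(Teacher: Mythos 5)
Your proof is correct and rests on exactly the same ingredients as the paper's own argument: Proposition \ref{prop:decr paths} to walk a far-away point back along a score-decreasing path, the intermediate value theorem to land on a sphere centred at $t$, compactness of that sphere intersected with the closed set $\A$, continuity of $\bar S(\cdot,F)$, and strict consistency. The only difference is presentational: the paper argues directly, defining $\delta$ as the minimum of $\bar S(\cdot,F)-\bar S(t,F)$ over the sphere of radius $\eps$ and showing every point beyond the sphere exceeds it, whereas you run the equivalent argument by contradiction with a minimizing sequence and sequential compactness.
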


\begin{proof}
Let $F\in \F$, $t=T(F)$ and $\eps>0$. Define 
\[
\delta := \min\{\bar S(z,F) - \bar S(t,F)\colon z\in \A, \ \|z -t\|=\eps\}.
\]
Due to the continuity of $\bar S(\cdot, F)$, the minimum is well-defined and, as a consequence of the strict $\F$-consistency of $S$ for $T$, $\delta$ is positive. 
Let $x\in\A$. If $ \|x-t\|=\eps$, we have, by the definition of $\delta$, that $\bar S(x,F) - \bar S(t,F) \ge \delta$. Assume that $ \|x-t\|>\eps$. Then there is a distribution $G\in\F$ with $T(G) = x$. Due to Proposition \ref{prop:decr paths} there is a continuous path $\gamma\colon[0,1]\to\A$ such that $\gamma(0)=x$, $\gamma(1)=t$ and such that $\bar S(\gamma(\lambda),F)$ is decreasing in $\lambda$. Moreover, if $\lambda<\lambda'$ such that $\gamma (\lambda )\neq \gamma(\lambda')$ it holds that $\bar S(\gamma(\lambda),F)>\bar S(\gamma(\lambda'),F)$. Due to the continuity of $\gamma$ there is some $x'\in \gamma([0,1])$ with $ \|x'-t\|=\eps$. Then we obtain $\bar S(x,F)- \bar S(t,F)>\bar S(x',F)- \bar S(t,F)\ge \delta$.
\end{proof}

We end this subsection about self-calibration by demonstrating its applicability in the context of M-estimation. 

\begin{thm}\label{thm:M-estimation consistency}
Let $S\colon \A\times \O\to\R$ be an $\F$-self-calibrated scoring function for a functional $T\colon\F\to\A\subseteq \R^k$. Then, the following assertion holds for all $F\in\F$.
If $Y_1, Y_2, \ldots$ is a sequence of random variables with distribution $F\in\F$ such that 
\[
\sup_{x\in\A} \left| \frac{1}{n}\sum_{i=1}^n S(x,Y_i) - \bar S(x,F)\right| \stackrel{\P}{\longrightarrow} 0,
\]
then 
\[
\argmin_{x\in\A} \frac{1}{n}\sum_{i=1}^n S(x,Y_i) \stackrel{\P}{\longrightarrow} T(F).
\]
\end{thm}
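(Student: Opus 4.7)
The plan is to follow the classical M-estimation argmin-consistency argument, leveraging the fact that self-calibration is, by the discussion preceding Proposition \ref{prop:self-calibration}, exactly the well-separatedness of the minimum of $\bar S(\cdot,F)$ at $t := T(F)$. Let me write $\bar S_n(x) := n^{-1}\sum_{i=1}^n S(x,Y_i)$ and denote by $\hat x_n$ any measurable element of $\argmin_{x\in\A}\bar S_n(x)$ (if existence or measurability of an exact minimizer is in doubt, one would work instead with a sequence of approximate minimizers $\hat x_n$ satisfying $\bar S_n(\hat x_n) \le \inf_x \bar S_n(x) + o_P(1)$; the argument below goes through verbatim).

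Fix $F\in\F$ and $\eps>0$. By $\F$-self-calibration there exists $\delta = \delta(\eps,F)>0$ such that $\bar S(x,F) - \bar S(t,F) < \delta$ forces $\|x-t\|<\eps$; equivalently, $\|x-t\| \ge \eps$ implies $\bar S(x,F)-\bar S(t,F) \ge \delta$. The core step is the standard three-term decomposition
\[
\bar S(\hat x_n,F) - \bar S(t,F)
 = \bigl[\bar S(\hat x_n,F) - \bar S_n(\hat x_n)\bigr]
 + \bigl[\bar S_n(\hat x_n) - \bar S_n(t)\bigr]
 + \bigl[\bar S_n(t) - \bar S(t,F)\bigr].
\]
The middle bracket is non-positive by definition of $\hat x_n$, while the first and third brackets are each bounded in absolute value by $\sup_{x\in\A}|\bar S_n(x)-\bar S(x,F)|$. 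Hence
\[
\bar S(\hat x_n,F) - \bar S(t,F) \;\le\; 2\sup_{x\in\A}\bigl|\bar S_n(x)-\bar S(x,F)\bigr|.
\]

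Combining this with the contrapositive of self-calibration yields the inclusion of events
\[
\bigl\{\|\hat x_n - t\| \ge \eps\bigr\}
\;\subseteq\;
\Bigl\{2\sup_{x\in\A}\bigl|\bar S_n(x)-\bar S(x,F)\bigr| \ge \delta\Bigr\},
\]
and the probability of the right-hand event tends to zero by the uniform-convergence hypothesis. Since $\eps>0$ was arbitrary, $\hat x_n \stackrel{\P}{\longrightarrow} t = T(F)$.

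I do not expect a serious obstacle: the whole argument rests on a single idea, namely turning self-calibration into a quantitative lower bound for the excess risk whenever $\hat x_n$ lies $\eps$-away from $t$. The only genuine pitfall is the implicit existence/measurability of the exact argmin $\hat x_n$; the cleanest remedy, as noted above, is to interpret $\argmin$ loosely as a sequence of near-minimizers, which is in line with the M-estimation literature (cf.\ \citet[Section 5.2]{Vaart1998}) and does not affect the bound.
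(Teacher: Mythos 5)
Your argument is correct and is essentially the paper's own proof: the paper simply cites \citet[Theorem 5.7]{Vaart1998}, having already observed that self-calibration is exactly well-separatedness of the minimizer, and your three-term decomposition together with the event inclusion is precisely the proof of that theorem (including the standard caveat about working with near-minimizers). No gap to report.
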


\begin{proof}
This is a direct consequence of \citet[Theorem 5.7]{Vaart1998}.
\end{proof}

\section{Order-sensitivity} \label{sec:order-sensitivity}

\subsection{Different notions of order-sensitivity}

The idea of order-sensitivity is that a forecast lying between the true functional value and some other forecast is also assigned an expected score lying between the two other expected scores. If the action domain is one dimensional, there are only two cases to consider: both forecasts are on the left-hand side of the functional value or on the right-hand side. However, if $\A\subseteq \R^k$ for $k\ge2$, the notion of `lying between' is ambiguous. Two obvious interpretations for the multidimensional case are the componentwise interpretation and the interpretation that one forecast is the convex combination of the true functional value and the other forecast. 

\begin{defn}[Componentwise order-sensitivity]
A scoring function $S\colon \A\times \O\to\R$ is called componentwise $\F$-order-sensitive for a functional $T\colon \F\to\A\subseteq\R^k$, if for all $F\in \F$, $t =T(F)$ and for all $x, z \in \A$ we have that:
\begin{multline}\label{eqn:componentwise order-sensitive}
\text{For all}\; m \in\{1, \ldots, d\} : z_m\le x_m \le T_m(F) \text{ or } z_m \ge x_m \ge T_m(F)\\
\ \implies \ \bar S(x,F) \le \bar S(z,F).
\end{multline}
Moreover, $S$ is called strictly componentwise $\F$-order-sensitive for $T$ if $S$ is componentwise $\F$-order-sensitive and if $x\neq z$ in \eqref{eqn:componentwise order-sensitive} implies that $\bar S(x,F)<\bar S(z,F)$.
\end{defn}
\begin{rem}\label{rem:Pareto improvements}
In economic terms, a strictly componentwise order-sensitive scoring function rewards \emph{Pareto improvements}\footnote{The definition of the \emph{Pareto principle} according to \cite{ScottMarshall2009}:
``A principle of welfare economics derived from the writings of Vilfredo Pareto, which states that a legitimate welfare improvement occurs when a particular change makes at least one person better off, without making any other person worse off. A market exchange which affects nobody adversely is considered to be a `Pareto-improvement' since it leaves one or more persons better off. `Pareto optimality' is said to exist when the distribution of economic welfare cannot be improved for one individual without reducing that of another.''}
in the sense that improving the prediction performance in one component without deteriorating the prediction ability in the other components results in a lower expected score.
\end{rem}

\begin{defn}[Order-sensitivity on line segments]
Let $\|\cdot\|$ be the Euclidean norm on $\R^k$.
A scoring function $S\colon \A\times \O\to\R$ is $\F$-order-sensitive on line segments for a functional $T\colon \F\to\A\subseteq\R^k$, if for all $F\in\F$, $t = T(F)$, and for all $v\in \mathbb S^{k-1} := \{ x\in\R^k \colon \|x\|=1\}$ the map
\[
\psi\colon D=\{s\in[0,\infty)\colon t+sv\in\A\} \to\R, \quad s\mapsto \bar S(t+sv,F)
\]
is increasing. If the map $\psi$ is strictly increasing, we call $S$ strictly $\F$-order-sensitive on line segments for $T$.
\end{defn}


These two notions of order-sensitivity do not allow for a comparison of any two misspecified forecasts, no matter where they are relative to the true functional value. An intuitive requirement could be `the closer to the true functional value the smaller the expected score', thus calling for the notion of a metric. Since, for a fixed functional $T$ and some fixed distribution $F$, we always have a fixed reference point $T(F)$ and we have the induced vector-space structure of $\R^k$ on $\A$, we shall only work with $\ell^p$-norms $\|\cdot\|_p$, $p\in[1,\infty]$. Recall that for $x\in\R^k$,  $\|x\|_p:=(\sum_{i=1}^k|x_i|^p)^{1/p}$ for $p\in[1,\infty)$ and $\|x\|_\infty := \sup_{i=1, \ldots, k}|x_i|$. If the assertion does not depend on the choice of $p$, we shall usually omit the $p$ in the notation. For other choices of $\A$, it would be also interesting to replace the norm by a metric in the following definition.

\begin{defn}[Metrical order-sensitivity]\label{defn:Metrical order-sensitivity}
Let $p \in [1,\infty]$.
A scoring function $S\colon \A\times \O\to\R$ is \emph{metrically $\F$-order-sensitive} for a functional $T\colon \F\to\A\subseteq\R^k$ relative to the $\ell^p$-norm, if for all $F\in \F$, $t=T(F)$ and for all $x, z\in \A$ we have that 
\begin{align}\label{eqn:metrical order-sensitive}
\|x-t\|_p \le \|z-t\|_p \ &\implies \ \bar S(x,F) \le \bar S(z,F).
\end{align}
If additionally the inequalities in \eqref{eqn:metrical order-sensitive} are strict, we say that $S$ is \emph{strictly metrically $\F$-order-sensitive} for $T$ relative to $\|\cdot\|_p$.
\end{defn}


Similarly to (strict) consistency, all three notions of (strict) order-sensitivity are preserved when considering two scoring functions that are equivalent.\footnote{Two scoring functions $S_1,S_2\colon\A\times \O\to\R$ are \emph{equivalent} if there is a positive constant $\lambda>0$ and an $\F$-integrable function $a\colon\O\to\R$ such that $S_2(x,y) = \lambda S_1(x,y) + a(y)$, for all $(x,y)\in\A\times \O$.}

The notion of componentwise order-sensitivity corresponds almost literally to the notion of accuracy-rewarding scoring functions introduced by \cite{Lambert2008}. Metrically order-sensitivity scoring functions have their counterparts in the field of probabilistic forecasting in effective scoring rules introduced by \cite{Friedman1983} and further investigated by \cite{Nau1985}. Actually, the latter paper has also given the inspiration for the notion of order-sensitivity on line segments. It is obvious that any of the three notions of (strict) order-sensitivity implies (strict) consistency. The next lemma formally states this result and gives some logical implications concerning the different notions of order-sensitivity. The proof is standard and therefore omitted.

\begin{lem}\label{lem:implications}
Let $T\colon \F\to\A\subseteq \R^k$ be a functional and $S\colon\A\times \O\to\R$ a scoring function. 
\begin{enumerate}[\rm (i)]
\item Let $p \in [1,\infty)$.
If $S$ is (strictly) metrically $\F$-order-sensitive for $T$ relative to the $\ell^p$-norm, then $S$ is (strictly) componentwise $\F$-order-sensitive for $T$.
\item[\rm (i')]
If $S$ is (strictly) metrically $\F$-order-sensitive for $T$ relative to the $\ell^\infty$-norm, then $S$ is componentwise $\F$-order-sensitive for $T$.
\item[\rm (i'')]
If $S$ is (strictly) metrically $\F$-order-sensitive for $T$ relative to the $\ell^\infty$-norm, then $S$ is (strictly) $\F$-consistent for $T$.
\item 
If $S$ is (strictly) componentwise $\F$-order-sensitive for $T$, then $S$ is (strictly) $\F$-order-sensitive on line segments for $T$.
\item 
If $S$ is (strictly) $\F$-order-sensitive on line segments for $T$, then $S$ is (strictly) $\F$-consistent for $T$.
\end{enumerate}
\end{lem}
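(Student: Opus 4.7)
My plan would be to dispatch each of the five implications by direct arguments, exploiting the fact that all three notions share the same reference point $t=T(F)$ and that $\A\subseteq\R^k$ carries the componentwise order and the Euclidean structure. The key observation is that ``componentwise between'' means $|x_m-t_m|\le|z_m-t_m|$ componentwise, which immediately controls both $\ell^p$-distance and motion along any ray from $t$.

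For (i), assume $S$ is metrically $\F$-order-sensitive for $T$ with respect to $\|\cdot\|_p$, $p\in[1,\infty)$, and suppose $x,z\in\A$ satisfy the componentwise condition. Then $|x_m-t_m|\le|z_m-t_m|$ for each $m$, so $\|x-t\|_p\le\|z-t\|_p$, giving $\bar S(x,F)\le\bar S(z,F)$. In the strict case, if additionally $x\neq z$, then some component has $|x_m-t_m|<|z_m-t_m|$, and because $p<\infty$ the $\ell^p$-norm is strictly monotone in each coordinate, so the inequality between norms is strict and strict metrical order-sensitivity yields $\bar S(x,F)<\bar S(z,F)$. For (i'), the same componentwise inequalities give $\|x-t\|_\infty\le\|z-t\|_\infty$, yielding the conclusion; note that strictness is not inherited since the maximum need not change when a single component shrinks. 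For (i''), setting $x=t$ gives $\|x-t\|_p=0\le\|z-t\|_p$ for any $z\in\A$, which yields $\bar S(t,F)\le\bar S(z,F)$, i.e.\ consistency; if $z\neq t$, then $\|z-t\|_\infty>0$ so strict metrical order-sensitivity yields strict consistency.

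For (ii), given $v\in\mathbb{S}^{k-1}$ and $0\le s_1\le s_2$ with $t+s_1v,\,t+s_2v\in\A$, set $x=t+s_1v$ and $z=t+s_2v$. Coordinatewise, if $v_m\ge0$ then $t_m\le x_m\le z_m$ and if $v_m\le0$ then $t_m\ge x_m\ge z_m$, so the componentwise premise holds and componentwise order-sensitivity gives $\bar S(x,F)\le\bar S(z,F)$; in the strict case, $s_1<s_2$ together with $v\neq0$ forces $x\neq z$, so strict componentwise order-sensitivity delivers the strict inequality. For (iii), given $x\in\A$ with $x\neq t$, set $v=(x-t)/\|x-t\|$ and $s=\|x-t\|>0$, so that $x=t+sv$; order-sensitivity on line segments gives $\bar S(x,F)=\psi(s)\ge\psi(0)=\bar S(t,F)$, and strict version gives the strict inequality, hence (strict) consistency.

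The only subtle point worth flagging is the behaviour of the $\ell^\infty$-norm in (i) versus (i'): strictness of the metrical hypothesis does not propagate to componentwise strictness because decreasing a non-maximal coordinate leaves $\|\cdot\|_\infty$ unchanged. This is why the statement splits into the three cases (i), (i'), (i''), with strict componentwise conclusions available only for $p<\infty$. Beyond this, each step is a short manipulation with the definitions, which is presumably why the authors declare the proof standard.
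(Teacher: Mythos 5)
Your proof is correct and is exactly the standard argument the paper has in mind (the paper omits the proof as "standard"): monotonicity of each $\ell^p$-norm in the componentwise distances $|x_m-t_m|$ gives (i)--(i''), parametrizing a ray through $t$ gives (ii), and taking $x=t$ resp.\ $s=0$ gives (i'') and (iii), with your remark on the failure of strictness for $\|\cdot\|_\infty$ correctly explaining the split into (i), (i'), (i''). The only blemish is the typo $\|x-t\|_p$ in the (i'') step, where it should read $\|x-t\|_\infty$; the argument itself is unaffected.
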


\subsection{Componentwise order-sensitivity}\label{subsec:Componentwise}

Under restrictive regularity assumptions, \citet[Theorem 5]{Lambert2008} claim that whenever a functional has a componentwise order-sensitive scoring function, the components of the functional must be elicitable. Moreover, assuming that the measures in $\F$ have finite support, they assert that any componentwise order-sensitive scoring function is the sum of strictly consistent scoring functions for the components. 
Lemma \ref{lem:components} shows the first claim under less restrictive smoothness assumptions on the scoring function. For many common examples of functionals, the second claim can be shown relaxing the restrictive condition on $\F$; see Proposition \ref{prop:sum} and the discussion before.

\begin{lem}\label{lem:components}
Let $T = (T_1, \ldots, T_k)\colon \F\to\A\subseteq \R^k$ be a $k$-dimensional functional with components $T_m\colon \F\to\A_m\subseteq \R$ where $\A = \A_1\times \cdots\times \A_k$. If there is  a strictly component\-wise $\F$-order-sensitive scoring function $S\colon \A\times \O\to\R$ for $T$, then the components $T_m$, $m\in\{1, \ldots, k\}$, are elicitable.
\end{lem}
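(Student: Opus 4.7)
The goal is, for each $m\in\{1,\dots,k\}$, to construct a strictly $\F$-consistent scoring function $S_m\colon\A_m\times \O\to\R$ for the component functional $T_m$. The natural strategy is to restrict $S$ to an axis-parallel line through a fixed reference point.

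First, I would fix an arbitrary reference vector $c=(c_1,\dots,c_k)\in\A$; this is possible because $\A=\A_1\times\cdots\times\A_k$ is non-empty (for instance, any point in the image of $T$ will do). Then, for each $m$, I would define
\[
S_m(x_m,y):=S(c_1,\dots,c_{m-1},x_m,c_{m+1},\dots,c_k,\,y),\qquad x_m\in\A_m,\ y\in\O.
\]
Since $S$ is $\F$-integrable in its first argument, so is $S_m$, hence $S_m$ qualifies as a scoring function in the sense of Definition~\ref{def:consistency}.

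It then remains to verify strict $\F$-consistency of $S_m$ for $T_m$. Fix $F\in\F$, set $t=T(F)$, and let $x_m\in\A_m$ with $x_m\neq T_m(F)$. Define the two vectors
\[
x:=(c_1,\dots,c_{m-1},T_m(F),c_{m+1},\dots,c_k),\qquad z:=(c_1,\dots,c_{m-1},x_m,c_{m+1},\dots,c_k),
\]
both of which lie in $\A$ by the product structure. I would check that the componentwise condition \eqref{eqn:componentwise order-sensitive} is met: for $j\neq m$ one has $z_j=x_j=c_j$, so trivially one of the two alternatives $c_j\le c_j\le T_j(F)$ or $c_j\ge c_j\ge T_j(F)$ holds; for $j=m$, the middle entry of $x$ equals $T_m(F)$, so $z_m\le T_m(F)\le T_m(F)$ or $z_m\ge T_m(F)\ge T_m(F)$ holds depending on the sign of $x_m-T_m(F)$. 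Since $x\neq z$, strict componentwise $\F$-order-sensitivity yields $\bar S(x,F)<\bar S(z,F)$, which is exactly $\bar S_m(T_m(F),F)<\bar S_m(x_m,F)$. This establishes strict $\F$-consistency of $S_m$ for $T_m$, whence $T_m$ is elicitable.

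There is essentially no hard step here; the only thing that requires care is the verification that the componentwise condition in \eqref{eqn:componentwise order-sensitive} is simultaneously met in \emph{every} coordinate, not only in the $m$-th coordinate, which is why the trivial $z_j=x_j=c_j$ observation for $j\neq m$ is the key. The product structure $\A=\A_1\times\cdots\times\A_k$ is what makes the chosen $x$ and $z$ legal elements of $\A$; without it, the argument would need to be adjusted (e.g.\ by restricting the set of admissible reference points $c$).
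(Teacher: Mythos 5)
Your proposal is correct and follows essentially the same route as the paper: the paper also defines $S_{m,z}(x_m,y):=S(z_1,\dots,z_{m-1},x_m,z_{m+1},\dots,z_k,y)$ for a fixed vector of remaining coordinates and deduces its strict $\F$-consistency for $T_m$ from strict componentwise order-sensitivity applied to two points differing only in the $m$-th coordinate, exactly as you do with your reference point $c$. The only (inessential) difference is that the paper observes this works for every choice of the frozen coordinates, while you fix one particular $c$.
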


\begin{proof}
Fix $m\in\{1, \ldots, k\}$. Let $F\in\F$ and $x,z\in\A$ such that $T_m(F) = x_m$, $x_i = z_i$ for all $i\neq m$ and $x_m\neq z_m$.
Due to the strict componentwise $\F$-order-sensitivity of $S$ this implies that 
$\bar S(x,F)<\bar S(z,F)$.
This in turn means that for any $z = (z_1, \ldots, z_k)\in\A$ the map $S_{m,z}\colon\A_m \times \O \to\R$,
\begin{align}\label{eq:S_{m,z}}
&(x_m,y)\mapsto S_{m,z}(x_m,y):=S(z_1, \ldots, z_{m-1},x_m,z_{m+1}, \ldots, z_k,y)
\end{align}
is a strictly $\F$-consistent scoring function for $T_m$.
\end{proof}

If $T_m\colon\F\to\A_m\subseteq \R$, $m\in\{1, \ldots k\}$, are mixture-continuous and elicitable with strictly $\F$-consistent scoring functions $S_m\colon\A_m\times \O\to\R$, then they are order-sensitive according to \citet[Proposition 2]{Lambert2013} and \citet[Proposition 3.4]{BelliniBignozzi2015}. Therefore, the sum $\sum_{m=1}^k S_m(x_m,y)$ is strictly componentwise $\F$-order-sensitive for $(T_1, \ldots, T_k)$.
More interestingly, one can establish the reverse of the last assertion.
Any strictly componentwise order-sensitive scoring function must necessarily be additively separable. In \citet[Section 4]{FisslerZiegel2016}, we established a dichotomy for functionals with elicitable components: In most relevant cases, the functional (the corresponding strict identification function, respectively) satisfies Assumption (V4) therein (e.g., when the functional is a vector of different quantiles and\,/\,or different expectiles with the exception of the 1/2-expectile), or it is a vector of ratios of expectations with the same denominator, or it is a combination of both situations. Under some regularity conditions, \citet[Propositions 4.2 and 4.4]{FisslerZiegel2016} characterize the form of strictly consistent scoring functions for the first two situations, whereas \citet[Remark 4.5]{FisslerZiegel2016} is concerned with the third situation. For this latter situation, any strictly consistent scoring function must be necessarily additive for the respective blocks of the functional. And for the first situation, \citet[Proposition 4.2]{FisslerZiegel2016} yields the additive form of $S$ automatically. It remains to consider the case of \citet[Proposition 4.4]{FisslerZiegel2016}, that is, a vector of ratios of expectations with the same denominator.

\begin{prop}\label{prop:sum}
Let $T\colon\F\to\A\subseteq \R^k$ be a ratio of expectations with the same denominator, that is, $T(F) = \E_F[p(Y)] / \E_F[q(Y)]$ for some $\F$-integrable functions $p\colon\O\to\R^k$, $q\colon \O\to\R$ such that $\E_F[q(Y)]>0$ for all $F\in\F$.\footnote{It is no loss of generality to assume that $\bar q(F)>0$ for all $F\in\F$ in Proposition \ref{prop:sum}. In order to ensure that $T$ is well-defined, necessarily $\bar q(F)\neq 0$ for all $F\in\F$. However, Assumption (V1) implies that $\F$ is convex. So if there are $F_1,F_2\in\F$ such that $\bar q(F_1)<0$ and $\bar q(F_2)>0$ then there is a convex combination $G$ of $F_1$ and $F_2$ such that $\bar q (G)=0$. Consequently, either $\bar q(F)>0$ for all $F\in\F$ or $\bar q(F)<0$ for all $F\in\F$, and by possibly changing the sign of $p$ one can assume that the first case holds.} 
Assume that $T$ is surjective, and that $\interior(\A)\neq\emptyset$ is simply connected. Moreover, consider the strict $\F$-identification function $V\colon \A\times \O\to\R^k$, $V(x,y) = q(y) x - p(y)$ and some strictly $\F$-consistent scoring function $S\colon \A\times \O\to\R$ such that the Assumptions (V1), (S2), (F1), and (VS1) in \cite{FisslerZiegel2016} hold. If $S$ is strictly componentwise $\F$-order-sensitive for $T$, then $S$ is of the form
\be{eq:S sum}
S(x_1, \ldots, x_k,y) = \sum_{m=1}^k S_m(x_m,y),
\ee
for almost all $(x,y) \in \A \times \O$, where $S_m\colon\A_m\times \O\to\R$, $m\in\{1, \ldots, k\}$, are strictly $\F$-consistent scoring functions for $T_m\colon \F\to \A_m$, $\A_m:=T_m(\F)\subseteq \R$, and $T_m(F) = \E_F[p_m(Y)]/\E_F[q(Y)]$.
\end{prop}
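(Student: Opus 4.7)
The plan is to combine the Osband-type characterization from \citet[Proposition 4.4]{FisslerZiegel2016} with the extra structure forced by strict componentwise order-sensitivity. Under the assumptions (V1), (S2), (F1), (VS1), that proposition provides, on $\interior(\A)$, a sufficiently smooth, pointwise invertible matrix-valued map $h\colon\interior(\A)\to\R^{k\times k}$ such that
\[
\nabla_x \bar S(x,F)=h(x)\,\bar V(x,F),\qquad x\in\interior(\A),\ F\in\F,
\]
with $\bar V(x,F)=\bar q(F)\,x-\bar p(F)$. The target is to show that (i) $h(x)$ is diagonal on $\interior(\A)$ and (ii) each diagonal entry $h_{mm}$ depends only on $x_m$. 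Once this is achieved, $\partial_{x_m}S(x,y)=h_{mm}(x_m)(q(y)x_m-p_m(y))$ integrates componentwise into the additive form \eqref{eq:S sum}, and $h_{mm}\neq 0$ automatically makes each summand strictly $\F$-consistent for $T_m$.

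For (i) I would apply Lemma \ref{lem:components}: strict componentwise order-sensitivity guarantees that, for any $z\in\A$ and any $m$, the sliced function $S_{m,z}$ defined in \eqref{eq:S_{m,z}} is strictly $\F$-consistent for $T_m$. Taking expectations and differentiating in $x_m$ gives, writing $\tilde x=(z_1,\dots,z_{m-1},x_m,z_{m+1},\dots,z_k)$,
\[
h_{mm}(\tilde x)\bar q(F)(x_m-T_m(F))+\bar q(F)\sum_{j\neq m}h_{mj}(\tilde x)(z_j-T_j(F))=0\ \iff\ x_m=T_m(F).
\]
At $x_m=T_m(F)$ the first term vanishes, so the second sum must vanish as well. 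Now the surjectivity of $T$ together with $\interior(\A)\neq\emptyset$ allows me, for fixed $z\in\interior(\A)$ and fixed $u_m\in\A_m$, to choose $F\in\F$ so that $T_m(F)=u_m$ while $(T_j(F))_{j\neq m}$ ranges over an entire neighborhood of $(z_j)_{j\neq m}$ in $\R^{k-1}$. Since the vanishing condition is linear in $(z_j-T_j(F))_{j\neq m}$, this degree of freedom forces $h_{mj}(z_1,\dots,u_m,\dots,z_k)=0$ for every $j\neq m$. Running over $m$ shows that $h$ is diagonal a.e.\ on $\interior(\A)$.

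For (ii), with $h$ diagonal, the equality of mixed second partials $\partial_{x_j}\partial_{x_m}S=\partial_{x_m}\partial_{x_j}S$ yields, for $j\neq m$,
\[
(\partial_{x_j}h_{mm}(x))(q(y)x_m-p_m(y))=(\partial_{x_m}h_{jj}(x))(q(y)x_j-p_j(y))\quad\text{for all }y\in\O.
\]
Taking expectation under an $F\in\F$ with $T_m(F)=x_m$ but $T_j(F)\neq x_j$ (again available by surjectivity and $\interior(\A)\neq\emptyset$) annihilates the left-hand side while leaving a nonzero multiple $\bar q(F)(x_j-T_j(F))$ of $\partial_{x_m}h_{jj}(x)$; hence $\partial_{x_m}h_{jj}(x)=0$, and by symmetry $\partial_{x_j}h_{mm}(x)=0$. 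Thus $h_{mm}$ depends only on $x_m$, and integrating in $x_m$ alone (absorbing $y$-only terms into the equivalence class) yields \eqref{eq:S sum}, the resulting $S_m$ being strictly $\F$-consistent for $T_m$ by the remark above. The main obstacle I expect is the two appeals to surjectivity: one has to verify carefully that the image of the slice $\{F\in\F\colon T_m(F)=u_m\}$ under $(T_j)_{j\neq m}$ contains a full neighborhood in $\R^{k-1}$, a fact that follows from $T(\F)=\A$ and $\interior(\A)\neq\emptyset$ but should be spelled out; the rest is the routine integration and handling of null sets, which is why the conclusion is stated for almost all $(x,y)\in\A\times\O$.
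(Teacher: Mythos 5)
Your proposal is correct and takes essentially the same route as the paper: Osband's principle combined with the slice consistency from Lemma \ref{lem:components} to make the off-diagonal entries of $h$ vanish, then a second-order (mixed-partials) argument to show $h_{mm}$ depends on $x_m$ only, and finally the pointwise integration as in the proof of \citet[Proposition 4.2(ii)]{FisslerZiegel2016}. The only cosmetic differences are that the paper obtains $h_{lr}(T(F))=0$ by differentiating the first-order identity in $z_r$ and invoking \citet[Corollary 3.3]{FisslerZiegel2016} rather than varying $F$ over the slice $\{F\colon T_m(F)=u_m\}$, and it cites the cyclic conditions of \citet[Proposition 4.4(i)]{FisslerZiegel2016} (valid a.e., and in expectation rather than pointwise in $y$) instead of re-deriving them.
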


\begin{proof}
Due to the fact that for fixed $y\in\O$, $V(x,y)$ is a polynomial in $x$, Assumption (V3) in \cite{FisslerZiegel2016} is automatically satisfied.
Let $h\colon\interior(\A)\to\R^{k\times k}$ be the matrix-valued function given in Osband's principle; see \citet[Theorem 3.2]{FisslerZiegel2016}. By \citet[Proposition 4.4(i)]{FisslerZiegel2016} we have that 
\begin{align}\label{eq:cyclic}
\partial_l h_{rm}(x) = \partial_r h_{lm}(x), && h_{rl}(x) = h_{lr}(x)
\end{align}
for all $r,l,m\in\{1,\ldots, k\}$, $l\neq r$, where the first identity holds for almost all $x\in \interior(\A)$ and the second identity for all $x\in\interior(\A)$.
Moreover, the matrix $\big(h_{rl}(x)\big)_{l,r=1, \ldots, k}$ is positive definite for all $x\in\interior(\A)$. If we can show that $h_{lr}=0$ for $l\neq r$, we can use the first part of \eqref{eq:cyclic} and deduce that for all $m\in\{1,\ldots, k\}$ there are positive functions $g_m\colon \A_m'\to\R$, where $\A_m'= \{x_m\in\R\colon \exists  (z_1, \ldots, z_k)\in\interior(\A) \text{ and }z_m = x_m\}$, such that 
\[
h_{mm}(x_1, \ldots, x_k) = g_m(x_m)
\]
for all $(x_1, \ldots, x_k)\in\interior(\A)$. Then, we can conclude like in the proof of \citet[Proposition 4.2(ii)]{FisslerZiegel2016}.\footnote{The arguments in \citet[Proposition 4.2(ii)]{FisslerZiegel2016} use \citet[Proposition 3.4]{FisslerZiegel2016}. There is a flaw in the latter result which has been pointed out in \cite{Brehmer2017}. We present a corrected version of the result in Appendix \ref{appendix:A}.
}

Fix $l,r\in\{1, \ldots, k\}$ with $l\neq r$ and $F\in\F$ such that $T(F)\in\interior(\A)$.
Due to the strict $\F$-consistency of $S_{l,z}$ defined at \eqref{eq:S_{m,z}} we have that
\[
0=\frac{\dint}{\dint x_l}\bar S_{l,z}(x_l,F) = \partial \bar S_{l,z}(x_l,F)= \partial_l \bar S(z_1, \ldots, z_{l-1},x_l,z_{l+1}, \ldots, z_k,F)
\]
whenever $x_l = T_l(F)$ and for all $z\in\interior(\A)$. This means the map $\interior(\A)\ni z\mapsto \partial \bar S_{l,z}(T_l(F),F)$ is constantly 0. Hence, for all $x\in\interior(\A)$ 
\[
\partial_r\partial_l \bar S(x,F)=0
\]
whenever $x_l = T_l(F)$. Using the special form of $V$ and \citet[Corollary 3.3]{FisslerZiegel2016}, we have for $x=t=T(F)$ that 
\[
0= \partial_r\partial_l \bar S(t,F) = h_{lr}(t) \partial_r \bar V_r(t,F) =  h_{lr}(t)\bar q(F)
\]
and by assumption $\bar q(F)>0$. Using the surjectivity of $T$ we obtain that $h_{lr}(t) = 0$ for all $t\in\interior(\A)$, which ends the proof.
\end{proof}


The notion of componentwise order-sensitivity has an appealing interpretation in the sense that it rewards Pareto improvements of the predictions; see Remark \ref{rem:Pareto improvements}. The results of Lemma \ref{lem:components} and Proposition \ref{prop:sum} give a clear understanding of the concept including its limitations to the case of functionals only consisting of elicitable components.

\citet{EhmETAL2016} introduced Murphy diagrams for forecast comparison of quantiles and expectiles. Murphy diagrams have the advantage that forecasts are compared simultaneously with respect to all consistent scoring functions for the respective functional. For many multivariate functionals such as ratios of expectations, the methodology cannot be readily extended because there are no mixture representations available for the class of all consistent scoring functions. Proposition \ref{prop:sum} shows that when considering only componentwise order-sensitive consistent scoring functions, the situations is different and mixture representations (and hence Murphy diagrams) are readily available for forecast comparison.

\subsection{Metrical order-sensitivity}

We start with an equivalent formulation of metrical order-sensitivity. 

\begin{lem}\label{lem:metrical vs symmetry2}
Let $\F$ be convex and $T\colon\F\to\A\subseteq \R^k$ be mixture-continuous and surjective. Let $S\colon \A\times \O\to\R$ be (strictly) $\F$-consistent for $T$. Then $S$ is (strictly) metrically $\F$-order-sensitive for $T$ relative to $\|\cdot\|$ if and only if for all $F\in\F$, $t=T(F)$ and $x,z\in\A$ we have the implication
\begin{align}\label{eq:symmetry2}
\|x-t\| = \|z-t\| \ \implies \ \bar S(x,F) = \bar S(z,F).
\end{align}
\end{lem}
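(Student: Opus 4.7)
The plan is to prove the two implications of the biconditional separately.

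The forward direction ($\Rightarrow$) is essentially immediate from the definition: if $\|x-t\|=\|z-t\|$, then both $\|x-t\|\le\|z-t\|$ and $\|z-t\|\le\|x-t\|$ are available, so two applications of (non-strict) metric order-sensitivity yield $\bar S(x,F)\le\bar S(z,F)$ and $\bar S(z,F)\le\bar S(x,F)$, hence equality. The same argument serves the strict formulation, since by Definition \ref{defn:Metrical order-sensitivity} strict metric order-sensitivity explicitly includes the non-strict inequality.

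For the reverse direction ($\Leftarrow$), I would combine the symmetry hypothesis \eqref{eq:symmetry2} with the path-monotonicity supplied by Proposition \ref{prop:decr paths}. Fix $F\in\F$, set $t=T(F)$, and take $x,z\in\A$ with $\|x-t\|\le\|z-t\|$; the case $z=t$ forces $x=t$ and is trivial, so assume $z\neq t$. Surjectivity of $T$ yields $G\in\F$ with $T(G)=z$, and Proposition \ref{prop:decr paths} (together with Remark~(i) following it when $S$ is only consistent rather than strictly consistent) then supplies a continuous path $\gamma\colon[0,1]\to\A$ with $\gamma(0)=z$ and $\gamma(1)=t$, along which $\lambda\mapsto\bar S(\gamma(\lambda),F)$ is decreasing. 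Since the continuous map $\lambda\mapsto\|\gamma(\lambda)-t\|$ takes the value $\|z-t\|\ge\|x-t\|$ at $\lambda=0$ and the value $0\le\|x-t\|$ at $\lambda=1$, the intermediate value theorem furnishes some $\lambda^*\in[0,1]$ with $\|\gamma(\lambda^*)-t\|=\|x-t\|$. The hypothesis \eqref{eq:symmetry2} then gives $\bar S(x,F)=\bar S(\gamma(\lambda^*),F)$, and monotonicity along $\gamma$ gives $\bar S(\gamma(\lambda^*),F)\le\bar S(\gamma(0),F)=\bar S(z,F)$, establishing metric order-sensitivity.

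For the strict case, assume $S$ is strictly $\F$-consistent and $\|x-t\|<\|z-t\|$. Then $\gamma(\lambda^*)\neq\gamma(0)=z$, since the two points have different norms relative to $t$, so the strict clause of Proposition \ref{prop:decr paths} sharpens the previous chain to $\bar S(z,F)>\bar S(\gamma(\lambda^*),F)=\bar S(x,F)$, as required. The only point that calls for a moment of care is arranging that the monotone path $\gamma$ actually crosses the $\|\cdot\|$-sphere of radius $\|x-t\|$ centered at $t$; this is precisely what the intermediate value theorem applied to $\lambda\mapsto\|\gamma(\lambda)-t\|$ delivers, and everything else is a direct assembly of ingredients already in place.
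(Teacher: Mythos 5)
Your proof is correct and follows essentially the same route as the paper's: the forward implication by applying (non-strict) metric order-sensitivity twice, and the reverse implication by taking the path from Proposition \ref{prop:decr paths} (with its Remark for the merely consistent case), locating via the intermediate value theorem a point on the path at distance $\|x-t\|$ from $t$, and combining \eqref{eq:symmetry2} with the (strict) monotonicity along the path. Your explicit handling of the case $z=t$ and of the strictness via the addendum to Proposition \ref{prop:decr paths} only makes explicit what the paper leaves terse.
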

\begin{proof}
Let $S$ be metrically $\F$-order sensitive for $T$ relative to $d$. Let $F\in\F$, $t=T(F)$, $x,z\in\A$ such that $\|x-t\| = \|z-t\|$. Then we have both $\bar S(x,F)\le \bar S(z,F)$ and $\bar S(z,F)\le \bar S(x,F)$.

Assume that \eqref{eq:symmetry2} holds and $S$ is (strictly) $\F$-consistent. Let $F\in\F$ with $t=T(F)$ and $x,z\in\A$. Suppose that $\|x-t\| \le \|z-t\|$. If $\|x-t\| = \|z-t\|$, \eqref{eq:symmetry2} implies that $\bar S(x,F) = \bar S(z,F)$ and there is nothing to show. If $\|x-t\|< \|z-t\|$, we can apply Proposition \ref{prop:decr paths}. There is a continuous path $\gamma\colon [0,1]\to\A$ such that $\gamma(0) = z$ and $\gamma(1)=t$, and the function $[0,1]\ni\lambda \mapsto \bar S(\gamma(\lambda), F)$ is decreasing. Due to continuity there is a $\lambda'\in[0,1]$ such that $\|\gamma(\lambda') - t\| = \|x-t\|$. Invoking \eqref{eq:symmetry2} it holds that $\bar S(x,F) = \bar S(\gamma(\lambda'), F)\le \bar S(z,F)$. If $S$ is strictly $\F$-consistent then the latter inequality is strict.
\end{proof}

For a real-valued functional $T$ there can be at most one strictly metrically order-sensitive scoring function, up to equivalence. To show this, we use Osband's principle and impose the corresponding regularity conditions.

\begin{prop}\label{prop:unique}
Let $T\colon \F\to\A\subseteq\R$ be a surjective, elicitable and identifiable functional with an oriented strict $\F$-identification function $V\colon \A\times \O\to\R$. If $\interior(\A) \neq \emptyset$ is convex and $S, S^*\colon \A\times \O\to\R$ are two strictly metrically $\F$-order-sensitive scoring functions for $T$ such that the Assumptions (V1), (V2), (S1), (F1) and (VS1) from \cite{FisslerZiegel2016} (with respect to both scoring functions) hold, then $S$ and $S^*$ are equivalent almost everywhere. 
\end{prop}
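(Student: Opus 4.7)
The strategy is to apply Osband's principle as formulated in \cite{FisslerZiegel2016} in order to reduce the problem to a comparison of the ``weight functions'' of $S$ and $S^*$, and then to exploit the symmetry of the expected scores around $T(F)$ forced by strict metrical order-sensitivity. Under assumptions (V1), (V2), (S1), (F1) and (VS1), Osband's principle yields functions $h, h^* \colon \interior(\A) \to \R$ such that $\partial_x \bar S(x,F) = h(x) \bar V(x,F)$ and $\partial_x \bar S^*(x,F) = h^*(x) \bar V(x,F)$ for all $x \in \interior(\A)$ and all $F \in \F$. Strict $\F$-consistency together with the orientation of $V$ forces $h(x), h^*(x) > 0$ on $\interior(\A)$.

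Next, I would extract a symmetry identity for $\bar S(\cdot,F)$ and $\bar S^*(\cdot,F)$. Given $F \in \F$, $t = T(F)$ and $s > 0$ with $t \pm s \in \A$, the points $t+s$ and $t-s$ are equidistant from $t$, so strict metrical order-sensitivity (applied to both orderings, cf.\ Lemma~\ref{lem:metrical vs symmetry2}) yields $\bar S(t+s,F) = \bar S(t-s,F)$ and the analogous equality for $\bar S^*$. Differentiating in $s$ and substituting the Osband representation gives $h(t+s)\,\bar V(t+s,F) = -\,h(t-s)\,\bar V(t-s,F)$ and the same identity with $h^*$ in place of $h$. Since $V$ is oriented, $\bar V(t+s,F)$ and $\bar V(t-s,F)$ are nonzero and of opposite signs for $s \neq 0$, so dividing the two identities eliminates $\bar V$ and leaves $g(t+s) = g(t-s)$, where $g := h/h^* \colon \interior(\A) \to (0,\infty)$.

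The final step promotes this local symmetry of $g$ around each $t \in T(\F)$ to constancy on $\interior(\A)$. For any $u, v \in \interior(\A)$, put $t := (u+v)/2$ and $s := |u-v|/2$; by convexity of $\interior(\A)$ one has $t \in \interior(\A)$, and by surjectivity of $T$ there is $F \in \F$ with $T(F) = t$, so the previous display forces $g(u) = g(v)$. Hence $h \equiv c\,h^*$ on $\interior(\A)$ for some constant $c > 0$, i.e.\ the two Osband representations coincide up to the scalar $c$, and in particular $\partial_x \bar S(x,F) = c\,\partial_x \bar S^*(x,F)$. Invoking the uniqueness half of Osband's principle from \cite{FisslerZiegel2016} (in the spirit of the proof of Proposition 4.2(ii) there), this lifts to $S(x,y) = c\, S^*(x,y) + a(y)$ for almost all $(x,y) \in \A \times \O$ and some $\F$-integrable $a \colon \O \to \R$, which is precisely the notion of equivalence.

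The main obstacle I expect is not the algebraic manipulation but the regularity bookkeeping: justifying the differentiation in $s$ inside $\bar S(\cdot,F)$, and, more importantly, lifting the identity $h \equiv c\,h^*$ at the level of expected-score derivatives to the almost-everywhere pointwise identity of $S$ and $c S^* + a$. Both steps rest on the full strength of (S1), (VS1) and the richness encoded in (F1), and crucially on surjectivity of $T$ in order to conclude that $g$ is symmetric around \emph{every} point of $\interior(\A)$ rather than only around the image $T(\F)$.
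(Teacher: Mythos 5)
Your overall route is the same as the paper's: apply Osband's principle, use Lemma \ref{lem:metrical vs symmetry2} to get the symmetry $\bar S(t+s,F)=\bar S(t-s,F)$, differentiate to obtain $h(t+s)\bar V(t+s,F)=-h(t-s)\bar V(t-s,F)$ (and its analogue for $S^*$), exploit surjectivity of $T$ and convexity of $\interior(\A)$ via the midpoint trick to make the ratio of the two Osband weight functions constant, and finally lift the identity of expected-score derivatives to an almost-everywhere pointwise equivalence via the integrated form of Osband's principle (the paper cites the corrected \citet[Proposition 3.4]{FisslerZiegel2016} in Appendix \ref{appendix:A} for exactly this step). The paper merely organizes the ratio argument differently, by treating $V^*=hV$ as a new oriented strict identification function and showing that the Osband weight of $S^*$ relative to $V^*$ is constant; this is mathematically the same as your claim that $g=h/h^*$ is constant.

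The one genuine gap is your assertion that ``strict $\F$-consistency together with the orientation of $V$ forces $h(x),h^*(x)>0$ on $\interior(\A)$.'' Those two hypotheses only give $h\ge 0$, $h^*\ge 0$: for the mean with $V(x,y)=x-y$ and the Bregman score built from $\phi(x)=x^4$, the score is strictly consistent and $V$ is oriented, yet $h=\phi''$ vanishes at $0$. Since your next step divides one identity by the other to define $g=h/h^*$ on all of $\interior(\A)$, nonvanishing of $h^*$ (and, symmetrically, of $h$) is not a cosmetic point but is needed for the argument to go through. The paper closes this by using the very symmetry identity you derive: if $h(z)=0$ for some $z\in\interior(\A)$, then for any $z'\in\interior(\A)$ one chooses (surjectivity plus convexity, midpoint construction) $F$ and $x\ne 0$ with $z=T(F)+x$, $z'=T(F)-x$; since $V$ is a \emph{strict} identification function, $\bar V(T(F)\pm x,F)\ne 0$, so the identity forces $h(z')=0$, hence $h\equiv 0$, contradicting strict consistency. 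Inserting this argument (for both $h$ and $h^*$) before the division repairs your proof without any new ideas; as written, however, the positivity step is justified by a false implication.
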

\begin{proof}
We apply Osband's principle, that is, \citet[Theorem 3.2]{FisslerZiegel2016} to $S$. Consequently, there is a function $h\colon\interior(\A)\to\R$ such that 
\be{eq:Osband S_1}
\frac{\diff}{\diff x}\bar S(x,F) = h(x) \bar V(x,F)
\ee
for all $F\in\F$, $x\in\interior(\A)$. Due to the strict $\F$-consistency of $S$ and the orientation of $V$, it holds that $h\ge0$. We show that actually $h>0$. Applying Lemma \ref{lem:metrical vs symmetry2}, one has that 
\be{eq:Sym}
\bar S(T(F)+x, F) = \bar S(T(F)-x,F)
\ee
for all $F\in\F$, $x\in\R$ such that $T(F)+x, T(F)-x\in\interior(\A)$. Hence, also the derivative with respect to $x$ of the left-hand side of \eqref{eq:Sym} must coincide with the derivative on the right-hand side. This yields, using \eqref{eq:Osband S_1}, 
\be{eq:Sym V}
h(T(F) + x)\bar V(T(F)+x,F) = -h(T(F)- x) \bar V(T(F)-x,F)
\ee
for all $F\in\F$, $x\in\R$ such that $T(F)+x, T(F)-x\in\interior(\A)$. Assume $h(z) = 0$ for some $z\in\interior(\A)$. Then, by surjectivity of $T$ and convexity of $\interior(\A)$, for all $z'\in\interior(\A)\setminus\{z\}$ there exists an $F\in \F$ and $x\in\R\setminus\{0\}$ such that $z=T(F)+x$ and $z'=T(F) - x$. Since $V$ is a \emph{strict} $\F$-identification function for $T$, both $\bar V(T(F) + x,F)\neq 0$ and $\bar V(T(F) - x,F)\neq0$. Hence, \eqref{eq:Sym V} implies that $h(z')=0$. This implies that $h$ identically vanishes on $\interior(\A)$ which contradicts the strict $\F$-consistency of $S$. 

Therefore, $V^*(x,y) := h(x)V(x,y)$ is an oriented strict $\F$-identification function for $T$. Applying Osband's principle to $S^*$, one obtains a function $h^*\colon \interior(\A)\to\R$ such that 
$
\diff/(\diff x)\bar S^*(x,F) = h^*(x) \bar V^*(x,F)
$
for all $F\in\F$, $x\in\R$ such that $T(F)+x, T(F)-x\in\interior(\A)$. Due to the analogue of \eqref{eq:Sym} for $S^*$ and \eqref{eq:Sym V}, one obtains
\begin{align*}
h^*(T(F) + x)\bar V^*(T(F)+x,F) &= -h^*(T(F)- x) \bar V^*(T(F)-x,F) \\
&= h^*(T(F)- x) \bar V^*(T(F)+x,F).
\end{align*}
for all $F\in\F$, $x\in\R$ with $T(F)+x, T(F)-x\in\interior(\A)$.
By a similar reasoning as above, one can deduce that $h^*$ must be constant and positive. Now, the claim follows by \citet[Proposition 3.4]{FisslerZiegel2016}; see Appendix \ref{appendix:A}.\end{proof}

For the higher-dimensional setting we can show a slightly more limited version of Proposition \ref{prop:unique}. Two scoring functions that are additively separable as in \eqref{eq:S sum} and that are strictly metrically order-sensitive for the same functional must necessarily be equivalent. 
For most practically relevant cases -- namely when we consider an $\ell^p$-norm with $p\in[1,\infty)$ and when the functional possesses an identification function satisfying Assumption (V4) in \cite{FisslerZiegel2016} or that are ratios of expectations with the same denominator -- Lemma \ref{lem:implications}, Proposition \ref{prop:sum} and \citet[Proposition 4.2]{FisslerZiegel2016} yield that any metrically order-sensitive scoring function -- presuming there is one -- is additively separable. Hence, for these situations, metrically order-sensitive scoring functions are unique, up to equivalence.

\begin{prop}\label{prop:unique lambda}
Let $S\colon\A\times \O\to\R$ be a strictly metrically $\F$-order-sensitive scoring function for a surjective functional $T = (T_1, \ldots, T_k)\colon \F\to\A\subseteq\R^k$ of the form
\[
S(x_1, \ldots, x_k,y)  = \sum_{m=1}^k S_m(x_m,y)
\] 
for all $(x,y)\in\A\times\O$ where $S_m\colon \A_m\times \O\to\R$, $m\in\{1, \ldots, k\}$, $\A_m= \{x_m\in\R\colon \exists  (z_1, \ldots, z_k)\in\A \text{ and }z_m = x_m\}$, are strictly $\F$-consistent scoring functions for $T_m$. 
Assume that $\interior(\A) \neq \emptyset$.
Then, the following assertions hold:
\begin{enumerate}[\rm (i)]
\item
The scoring functions $S_m$, $m\in\{1, \ldots, k\}$, are strictly metrically $\F$-order-sensitive for $T_m$. 
\item
Let $\lambda_1, \ldots, \lambda_k>0$ and define the scoring function $S^*\colon \A\times \O\to\R$ via
\[
S^*(x_1, \ldots, x_k)  = \sum_{m=1}^k \lambda_m S_m(x_m,y).
\]
Then $S^*$ is strictly metrically $\F$-order-sensitive (with respect to the same $\ell^p$-norm as $S$) if and only if $\lambda_1 = \cdots = \lambda_k$.
\end{enumerate}
\end{prop}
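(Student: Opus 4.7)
The key observation driving both parts is that additive separability of $S$ (and of $S^*$) means that a perturbation of $T(F)$ only in the $m$-th coordinate affects only the $m$-th summand $\bar S_m(\cdot,F)$ (resp.\ $\lambda_m\bar S_m(\cdot,F)$). Strict metrical $\F$-order-sensitivity of $S$ for $T$ (applied in both directions whenever two forecasts sit at the same $\ell^p$-distance from $T(F)$) forces $\bar S(x,F)=\bar S(z,F)$ whenever $\|x-T(F)\|_p=\|z-T(F)\|_p$; this ``equidistance equality'' is the engine of both (i) and (ii). No invocation of Lemma~\ref{lem:metrical vs symmetry2} is needed because we obtain it directly by applying the defining implication with $\le$ in both directions.

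\textbf{Part (i).} Fix $m\in\{1,\ldots,k\}$, $F\in\F$, and set $t=T(F)$. Given $x_m,z_m\in\A_m$ with $|x_m-t_m|\le|z_m-t_m|$, I would compare the two forecasts obtained by replacing only the $m$-th coordinate of $t$:
\[
x=(t_1,\ldots,t_{m-1},x_m,t_{m+1},\ldots,t_k), \qquad z=(t_1,\ldots,t_{m-1},z_m,t_{m+1},\ldots,t_k).
\]
Here I use that the action domain has the product form $\A=\A_1\times\cdots\times\A_k$ implicit in the additive structure $S(x,y)=\sum_m S_m(x_m,y)$. For any $\ell^p$-norm the distances of these points to $t$ collapse to the single-coordinate magnitudes, $\|x-t\|_p=|x_m-t_m|$ and $\|z-t\|_p=|z_m-t_m|$. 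Applying strict metrical order-sensitivity of $S$ to $x$ and $z$, all summands with indices $i\neq m$ agree between $\bar S(x,F)$ and $\bar S(z,F)$ and cancel, leaving $\bar S_m(x_m,F)\le \bar S_m(z_m,F)$, with strict inequality when $|x_m-t_m|<|z_m-t_m|$.

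\textbf{Part (ii).} The ``if'' direction is immediate: equal weights give $S^*=\lambda S$, which is equivalent to $S$ and hence inherits strict metrical order-sensitivity. For the converse, fix $F\in\F$ such that $t=T(F)\in\interior(\A)$ (such an $F$ exists by surjectivity of $T$) and choose $\eps>0$ small enough that $t\pm\eps e_m\in\A$ for every $m$. For any indices $m\neq j$ the perturbed forecasts $t+\eps e_m$ and $t+\eps e_j$ lie at the same $\ell^p$-distance $\eps$ from $t$. Applying the defining implication of strict metrical order-sensitivity to $S$ with $\le$ in both directions yields $\bar S(t+\eps e_m,F)=\bar S(t+\eps e_j,F)$, which simplifies to the coordinate-wise increment identity
\[
\Delta_m(F,\eps):=\bar S_m(t_m+\eps,F)-\bar S_m(t_m,F)=\bar S_j(t_j+\eps,F)-\bar S_j(t_j,F)=:\Delta_j(F,\eps).
\]
Running the identical argument for $S^*$ gives $\lambda_m\Delta_m(F,\eps)=\lambda_j\Delta_j(F,\eps)$. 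Because each $S_m$ is strictly $\F$-consistent for $T_m$, $\Delta_m(F,\eps)>0$, so these two equalities force $\lambda_m=\lambda_j$ for all $m,j$. The only expected obstacle is ensuring the auxiliary forecasts lie in $\A$, which is why the interior assumption is used in (ii) and the product structure of $\A$ inherent in the additive separability is used in (i).
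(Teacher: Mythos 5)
Your proof is correct, and its engine is the same as the paper's: additive separability plus the observation that strict metrical order-sensitivity forces equal expected scores at points equidistant from $T(F)$. Part (ii) is essentially identical to the paper's argument (the paper compares a point perturbed in coordinate $1$ with one perturbed in coordinate $m$, both at the same $\ell^p$-distance from $t$, applies the equidistance equality to both $S$ and $S^*$, and divides by the strictly positive increment of $\bar S_1$); your choice $t+\eps e_m$ versus $t+\eps e_j$ is just the simplest instance of this. In part (i) you take a genuinely more direct route: you transfer the monotonicity of $\bar S(\cdot,F)$ along coordinate perturbations straight to $\bar S_m(\cdot,F)$, whereas the paper first derives only the symmetry $\bar S_m(t_m+\mu,F)=\bar S_m(t_m-\mu,F)$ and then invokes the nontrivial direction of Lemma \ref{lem:metrical vs symmetry2} to upgrade symmetry plus strict consistency to metrical order-sensitivity -- a step that silently imports the hypotheses of that lemma (convexity of $\F$, mixture-continuity and surjectivity of $T_m$), which are not in the statement of the proposition. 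Your argument avoids this detour, which is a small gain in both economy and generality. One caveat: your justification that $\A=\A_1\times\cdots\times\A_k$ is ``implicit in the additive structure'' is not valid -- an additively separable $S$ can perfectly well be restricted to a non-product $\A$, and $\A_m$ is defined as a projection. You do need the coordinate-perturbed points to lie in $\A$ (in (i) for arbitrary $x_m,z_m\in\A_m$, in (ii) only locally, where $t\in\interior(\A)$ and small $\eps$ suffice, as you note). The paper glosses over the same issue in (i) by working only with $t\in\interior(\A)$ and perturbations inside $\interior(\A)$, so this is a shared imprecision rather than a defect specific to your proof, but you should state the product (or star-shaped along coordinate lines) assumption openly rather than claim it follows from additivity.
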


\begin{proof}
\begin{enumerate}[(i)]
\item
Let $m\in\{1, \ldots, k\}$, $F\in\F$ with $t = T(F) \in \interior(\A)$. Let $\mu\in\R$ and $x,z\in\interior(\A)$ with $x_i = z_i =t_i$ for all $i\neq m$ and with $x_m = t_m + \mu$ and $z_m = t_m -\mu$, such that $|x-t| = |z-t|$. Due to Lemma \ref{lem:metrical vs symmetry2} and due to the particular additive form of $S$, we have
\begin{align*}
0= \bar S(x,F) - \bar S(z,F) &= \bar S_m(x_m,F) - \bar S_m(z_m,F) \\
&= \bar S_m(t_m + \mu,F) - \bar S_m(t_m-\mu,F).
\end{align*}
Again with Lemma \ref{lem:metrical vs symmetry2} one obtains the assertion.
\item
The only interesting direction is to assume that $S^*$ is strictly metrically $\F$-order-sensitive (with respect to the same $\ell^p$-norm as $S$). We will show that $\lambda_1 = \lambda_m$ for all $m\in\{2, \ldots, k\}$. 
Let $F\in\F$, $t=T(F) \in \interior(\A)$, $x,z\in\interior(\A)$ with $\|x-t\|_p = \|z-t\|_p>0$ and $x_i = z_i = t_i$ for all $i\in\{2, \ldots, k\}\backslash\{m\}$. Moreover, let $x_1\neq z_1 = t_1$. Due to Lemma \ref{lem:metrical vs symmetry2} we have that $\bar S(x,F) - \bar S(z,F) = \bar S^*(x,F) - \bar S^*(z,F) = 0$.
Moreover, 
\begin{align*}
0=\bar S(x,F) - \bar S(z,F) &= \sum_{i=1}^k \bar S_i(x_i,F) - \bar S_i(z_i,F) \\
&= \bar S_1(x_1,F) - \bar S_1(z_1,F) + \bar S_m(x_m,F) - \bar S_m(z_m,F).
\end{align*}
Setting $\varepsilon := \bar S_1(x_1,F) - \bar S_1(z_1,F)>0$, one obtains with the same calculation
\begin{align*}
0&= \bar S^*(x,F) - \bar S^*(z,F) \\
&=\lambda_1\big( \bar S_1(x_1,F) - \bar S_1(z_1,F)\big) + \lambda_m\big(\bar S_m(x_m,F) - \bar S_m(z_m,F)\big) = \varepsilon(\lambda_1 - \lambda_m).
\end{align*}
\end{enumerate}
\end{proof}

Next, we use the derived theoretical results to examine when some popular functionals admit strictly metrically order-sensitive scoring functions, and if so, of what form they are.

\subsubsection{Ratios of expectations with the same denominator}\label{subsubsec:ratio of exp}

We start with the one-dimensional characterization. 

\begin{lem}\label{lem:ratios of exp k=1}
Let $\F$ be convex and $p,q\colon\O\to\R$ two $\F$-integrable functions such that $\bar q(F)>0$ for all $F\in\F$. Define $T\colon\F\to\A\subseteq\R$, $T(F) = \bar p(F) / \bar q(F)$ and assume that $T$ is surjective and $\interior(\A) \neq\emptyset$ is convex. Then the following two assertions are true:
\begin{enumerate}[\rm (i)]
\item
Any scoring function which is equivalent to
\be{eq:ratio_a}
S\colon \A\times \O\to\R, \qquad S(x,y) = \frac12 q(y)x^2 - p(y)x
\ee
is strictly metrically $\F$-order-sensitive for $T$.
\item
If $\F$ is such that Assumptions (V1), (F1) in \cite{FisslerZiegel2016} are satisfied with $V(x,y) = q(y)x-p(y)$, then any scoring function $S^*\colon\A\times\O\to\R$, which is strictly metrically $\F$-order-sensitive  and satisfies Assumptions (S1) and (VS1), is equivalent to $S$ defined at \eqref{eq:ratio_a} almost everywhere.
\end{enumerate}
\end{lem}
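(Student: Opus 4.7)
\textbf{Proof plan for Lemma \ref{lem:ratios of exp k=1}.}

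For part (i), the plan is to reduce to a direct computation. Using linearity of the expectation, the expected score of $S(x,y) = \tfrac{1}{2} q(y) x^2 - p(y) x$ is
\[
\bar S(x,F) = \tfrac{1}{2}\bar q(F) x^2 - \bar p(F) x.
\]
Since $\bar q(F) > 0$, we can complete the square around $t := T(F) = \bar p(F)/\bar q(F)$ to obtain
\[
\bar S(x,F) = \tfrac{1}{2}\bar q(F)(x-t)^2 - \tfrac{\bar p(F)^2}{2\bar q(F)}.
\]
The second term is independent of $x$, so $\bar S(\cdot,F)$ is a strictly increasing function of $|x-t|$ on $\A$. As $\A \subseteq \R$, all $\ell^p$-norms coincide with the absolute value, and this is precisely the definition of strict metric $\F$-order-sensitivity for $T$. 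Equivalent scoring functions differ only by a positive factor and an $\F$-integrable function of $y$, so the property is preserved.

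For part (ii), the plan is to invoke Proposition \ref{prop:unique}. First I would check that $V(x,y) = q(y)x - p(y)$ is an oriented strict $\F$-identification function for $T$: indeed $\bar V(x,F) = \bar q(F)(x - T(F))$, which vanishes only at $x = T(F)$ and has the sign of $x-T(F)$ because $\bar q(F) > 0$. Next I would verify that the scoring function $S$ from (i) satisfies Assumptions (S1) and (VS1) of \cite{FisslerZiegel2016}; this is routine since both $S$ and $V$ are polynomial in $x$ with coefficients depending linearly on $p(y), q(y)$, which are $\F$-integrable by hypothesis, so the required smoothness of $\bar S(\cdot,F)$ and the compatibility of $\nabla_x \bar S(\cdot,F)$ with $\bar V(\cdot,F)$ (in fact $\partial_x \bar S(x,F) = \bar V(x,F)$ here) are immediate. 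Finally, since by part (i), $S$ is strictly metrically $\F$-order-sensitive for $T$, and $S^*$ is strictly metrically $\F$-order-sensitive by assumption and satisfies (S1) and (VS1), and since the hypothesis supplies (V1), (F1) for $V$ (with (V2) automatic as $V$ is a degree-one polynomial in $x$), Proposition \ref{prop:unique} yields that $S$ and $S^*$ are equivalent almost everywhere.

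The main obstacle is essentially bookkeeping — confirming the FZ16 regularity assumptions are met for the explicit $S$ and $V$ at hand — but no subtle argument is needed because the explicit quadratic-linear form makes all relevant derivatives and integrability conditions transparent. Everything else follows from the already proven Proposition \ref{prop:unique}.
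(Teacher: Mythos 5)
Your proposal is correct. For part (i) you take a slightly different, more self-contained route than the paper: the paper computes $\bar S(T(F)+x,F)$, observes it is an even function of the displacement $x$, and then invokes the symmetry characterization of Lemma \ref{lem:metrical vs symmetry2} (which in turn needs convexity of $\F$, mixture-continuity and surjectivity of $T$, and strict consistency of $S$), whereas you simply complete the square and read off that $\bar S(\cdot,F)$ is a strictly increasing function of $|x-T(F)|$, which in dimension one is verbatim the definition of strict metrical order-sensitivity; this bypasses the auxiliary lemma and its hypotheses altogether and yields strict consistency as a by-product, at the cost of not illustrating the general symmetry criterion the paper wants to showcase. For part (ii) your argument is essentially identical to the paper's: both reduce the claim to Proposition \ref{prop:unique}; you additionally make explicit the bookkeeping the paper leaves implicit, namely that $\bar V(x,F)=\bar q(F)(x-T(F))$ is an oriented strict identification function, that (V2) and the assumptions (S1), (VS1) hold for the explicit quadratic score \eqref{eq:ratio_a} because $S$ and $V$ are polynomial in $x$ with $\F$-integrable coefficients and $\partial_x\bar S(x,F)=\bar V(x,F)$. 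These checks are exactly what is needed for Proposition \ref{prop:unique} to apply with respect to both scoring functions, so there is no gap; the only remark is that the paper also notes convexity of $\A$ follows from mixture-continuity of $T$ and convexity of $\F$, which in your write-up is simply taken from the hypothesis that $\interior(\A)$ is convex.
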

\begin{proof}
\begin{enumerate}[\rm (i)]
\item
We can apply Lemma \ref{lem:metrical vs symmetry2}. Let $F\in\F$. Then
\[
\R\ni x\mapsto \bar S(T(F) +x,F) = \frac{1}{2}\bar q(F) x^2 - \frac12 \frac{\bar p(F)^2}{\bar q(F)}
\]
is an even function in $x$. Moreover, equivalence of scoring functions preserves (strict) metrical order-sensitivity.
\item
The convexity of $\A$ is implied by the mixture-continuity of $T$ and the convexity of $\F$. Then, the claim follows with Proposition \ref{prop:unique}.
\end{enumerate}
\end{proof}

Now, we turn to the multivariate characterization.

\begin{prop}\label{prop:multratio}
Let $k\ge2$, $\F$ be convex and $p\colon\O\to\R^k$, $q\colon\O\to\R$ two $\F$-integrable functions such that $\bar q(F)>0$ for all $F\in\F$. Define $T\colon\F\to\A\subseteq\R^k$, $T(F) = \bar p(F) / \bar q(F)$ and assume that $T$ is surjective and $\interior(\A)\neq \emptyset$. Then, the following assertions are true:
\begin{enumerate}[\rm (i)]
\item
Any scoring function which is equivalent to
\be{eq:ratio2_a}
S\colon \A\times \O\to\R, \qquad S(x_1, \ldots, x_k,y) =\sum_{m=1}^k  \frac12 q(y)x_m^2 - p_m(y)x_m
\ee
is strictly metrically $\F$-order-sensitive for $T$ with respect to the $\ell^2$-norm.
\item
If $\F$ is such that Assumptions (V1), (F1) in \cite{FisslerZiegel2016} are satisfied with $V(x,y) = q(y)x-p(y)$, then any scoring function $S^*\colon\A\times\O\to\R$, which is strictly metrically $\F$-order-sensitive with respect to the $\ell^2$-norm and satisfies Assumptions (S1) and (VS1), is equivalent to $S$ defined at \eqref{eq:ratio2_a} almost everywhere.
\item
If $\F$ is such that Assumptions (V1), (F1) in \cite{FisslerZiegel2016} are satisfied with $V(x,y) = q(y)x-p(y)$, then there is no scoring function $S^*\colon\A\times\O\to\R$ which satisfies Assumptions (S1) and (VS1) and which is strictly metrically $\F$-order-sensitive with respect to an $\ell^p$-norm with $p\in[1,\infty)\setminus\{2\}$.
\end{enumerate}
\end{prop}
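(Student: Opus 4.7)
My plan is to assume for contradiction that some such $S^*$ exists, reduce it via the earlier results to a sum of one-dimensional quadratic scores, and then exploit the characterization of metrical order-sensitivity via equidistant points (Lemma \ref{lem:metrical vs symmetry2}) to derive a constraint that forces $p=2$.

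First I would pin down the form of $S^*$. Since $p < \infty$, Lemma \ref{lem:implications}(i) gives that $S^*$ is strictly componentwise $\F$-order-sensitive for $T$. The functional $T$ is a ratio of expectations with the same denominator, so Proposition \ref{prop:sum} applies (the required regularity hypotheses on $S^*$ match those assumed in (iii), and (V3) is automatic because $V$ is polynomial in $x$) and yields an additive decomposition $S^*(x,y) = \sum_{m=1}^k S^*_m(x_m,y)$ almost everywhere, where each $S^*_m$ is strictly $\F$-consistent for the one-dimensional ratio $T_m = \bar p_m/\bar q$. Applying Proposition \ref{prop:unique lambda}(i) promotes each $S^*_m$ to a strictly metrically $\F$-order-sensitive scoring function for $T_m$ (the relevant axis-perturbations $t \pm \mu e_m$ are $\ell^p$-equidistant from $t$ for any $p$). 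Finally, Lemma \ref{lem:ratios of exp k=1}(ii) forces
\[
S^*_m(x_m,y) = \lambda_m\bigl[\tfrac{1}{2}q(y)x_m^2 - p_m(y)x_m\bigr] + a_m(y), \qquad \lambda_m > 0,
\]
for some $\F$-integrable $a_m$, so $\bar S^*_m(x_m,F)$ is (up to an $F$-dependent constant) a quadratic in $x_m$ with leading coefficient $\tfrac{1}{2}\lambda_m\bar q(F)$ and vanishing first derivative at $x_m = t_m = \bar p_m(F)/\bar q(F)$. Hence
\[
\bar S^*_m(t_m + \mu, F) - \bar S^*_m(t_m, F) = \tfrac{1}{2}\lambda_m \bar q(F)\,\mu^2.
\]

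Next I would exploit the $\ell^p$-symmetry. Pick $F \in \F$ with $t=T(F)\in\interior(\A)$ and $\bar q(F) > 0$. Fix indices $i \neq j$ and small $\mu > 0$, and set $x = t + \mu(e_i + e_j)$, $z = t + 2^{1/p}\mu\,e_i$. Then $\|x-t\|_p = \|z-t\|_p = 2^{1/p}\mu$, so by Lemma \ref{lem:metrical vs symmetry2} we have $\bar S^*(x,F) = \bar S^*(z,F)$. Using the additive quadratic form computed above, this collapses to
\[
\lambda_i + \lambda_j \;=\; 2^{2/p}\,\lambda_i.
\]
Swapping the roles of $i$ and $j$ yields $\lambda_i + \lambda_j = 2^{2/p}\lambda_j$, whence $\lambda_i = \lambda_j$ and $2^{2/p} = 2$, i.e.\ $p = 2$, contradicting the assumption $p \in [1,\infty)\setminus\{2\}$.

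The main obstacle in writing this out rigorously is the regularity bookkeeping --- verifying that the hypotheses needed by Proposition \ref{prop:sum}, Proposition \ref{prop:unique lambda}(i), and Lemma \ref{lem:ratios of exp k=1}(ii) are all ensured by the assumptions imposed in part (iii) (so that the additive decomposition, its componentwise promotion, and the one-dimensional classification all apply). Once the additive quadratic form is established, the contradiction is simply the geometric fact that among $\ell^p$-norms on $\R^k$ with $k\ge 2$, only $\ell^2$ is compatible with the isotropic Euclidean geometry of the expected quadratic score.
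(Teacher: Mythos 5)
Your treatment of part (iii) follows essentially the same route as the paper's: Lemma \ref{lem:implications}(i) gives componentwise order-sensitivity, Proposition \ref{prop:sum} gives additive separability, Proposition \ref{prop:unique lambda}(i) promotes the components $S^*_m$ to strictly metrically order-sensitive scores for $T_m$, Lemma \ref{lem:ratios of exp k=1}(ii) then pins each $S^*_m$ down to $\lambda_m\bigl(\tfrac12 q(y)x_m^2-p_m(y)x_m\bigr)+a_m(y)$, and the conclusion comes from the $\ell^p$-symmetry of the expected score via Lemma \ref{lem:metrical vs symmetry2}. Where the paper ends with ``it is not hard to see that there are always $x,x'$ with $\|x\|_p=\|x'\|_p$ but different expected scores'', you make this explicit with the points $t+\mu(e_i+e_j)$ and $t+2^{1/p}\mu e_i$, obtaining $\lambda_i+\lambda_j=2^{2/p}\lambda_i$ and, by symmetry, $\lambda_i=\lambda_j$ and $2^{2/p}=2$. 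This is a clean finish, and read at $p=2$ it even yields $\lambda_1=\cdots=\lambda_k$ directly, which is how part (ii) concludes; the paper instead gets (ii) from part (i) combined with Proposition \ref{prop:unique lambda}(ii).

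The gap is that the proposal covers only (iii) (and, implicitly, the uniqueness statement (ii)), while part (i) of the statement -- that the score at \eqref{eq:ratio2_a} is itself strictly metrically $\F$-order-sensitive with respect to $\ell^2$ -- is nowhere proved and does not follow from your contradiction argument. It is a short verification in the spirit of Lemma \ref{lem:metrical vs symmetry2}: for every $F$,
\[
\bar S(T(F)+x,F)=\tfrac12\,\bar q(F)\,\|x\|_2^2-\frac{1}{2\bar q(F)}\sum_{m=1}^k \bar p_m(F)^2
\]
depends on $x$ only through $\|x\|_2$, and $S$ is strictly $\F$-consistent, so the equivalence in that lemma applies; this should be stated. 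Beyond that, the regularity bookkeeping you flag is real but matches what the paper itself glosses over: you need the one-dimensional hypotheses (surjectivity of $T_m$, and (V1), (F1), (S1), (VS1) for $V_m(x_m,y)=q(y)x_m-p_m(y)$ and $S^*_m$) to be inherited from the multivariate assumptions, and the almost-everywhere identity from Proposition \ref{prop:sum} must be upgraded at the specific points $x,z$ via continuity of $\bar S^*(\cdot,F)$; both are routine and do not affect the validity of the argument.
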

\begin{proof}
To show (i) we apply again Lemma \ref{lem:metrical vs symmetry2}. For any $F\in\F$, $x \in \R^k$, we have $\bar S(T(F) + x, F) = (1/2) \bar q(F)\|x\|_2^2  - 1/(2\bar q(F))\sum_{m=1}^k \bar p_m(F)^2$ which only depends on the $\ell^2$-norm of $x$.

We prove (ii) and (iii) together. Assume there is a scoring function $S^*$ satisfying the conditions above, so in particular, it is strictly metrically $\F$-order-sensitive with respect to the $\ell^p$-norm for $p\in[1, \infty)$. Invoking Lemma \ref{lem:implications}(i), $S^*$ is strictly componentwise $\F$-order-sensitive for $T$. Thanks to Proposition \ref{prop:sum}, $S^*$ is additively separable. By Lemma \ref{lem:ratios of exp k=1}(i), it is of the form 
\[
S^*(x_1,\dots,x_k) = \sum_{m=1}^k\lambda_m \left(\frac{1}{2}q(y)x_m^2 - p_m(y)x_m\right) + \sum_{m=1}^k a_m(y).
\]
If $p=2$, part (i) and Proposition \ref{prop:unique lambda}(ii) yield that $\lambda_1 = \dots =\lambda_k$, and hence, $S$ and $S^*$ are equivalent.
For $p\not=2$, we obtain $\bar S(T(F) + x, F) = (1/2) \bar q(F)\sum_{m=1}^k \lambda_m x_m^2  - 1/(2\bar q(F))\sum_{m=1}^k \bar p_m(F)^2$. It is not hard to see that there are always $x,x'$ with $\|x\|_p = \|x'\|_p$ but $\bar S(T(F) + x, F) \not= \bar S(T(F) + x', F)$.
\end{proof}

\citet[Section 5]{Savage1971} has already shown that in case of the mean, the squared loss is essentially the only symmetric loss in the sense that it is the only metrically order-sensitive loss for the mean. See also \citet[Section 2.1]{Patton2016} for a discussion that symmetry -- or metrical order-sensitivity -- is not necessary for strict consistency of scoring functions with respect to the mean.

\subsubsection{Quantiles}

Since we treat only point-valued functionals in this article, we shall assume that the $\a$-quantile of $F$ is a singleton and identify the set with its unique element (henceforth, we shall refer to this assumption as $F$ having a unique $\a$-quantile).\footnote{Recall that the $\a$-quantile of a distribution $F$ consists of all points $x\in\R$ satisfying $\lim_{t\uparrow x}F(t)\le \a\le F(x)$.}
Furthermore, note that assuming the identifiability of the $\a$-quantile with the canonical identification function $V_\alpha(x,y) = \one\{y\le x\} - \alpha$ on a class $\F$ amounts to assuming that $F(q_\a(F))=\a$ for all $F\in\F$.\footnote{Actually, assuming $\F$ is convex and rich enough, this holds for any identification function for the $\a$-quantile. Indeed, consider some distribution function $F_0\in\F$ and some level $\a\in(0,1)$. Fix some $x_0\in\R$ such that $F_0(x_0)<\a$, implying that $q_\a(F_0)>x_0$. Assume that for any $\lambda\in[0,1]$, the distribution 
\[
F_\lambda(x) =\begin{cases}
F_0(x), & x<x_0 \\
(1-\lambda)F_0(x) + \lambda, & x\ge x_0
\end{cases}
\]
is an element of $\F$. Then, there is some $\lambda'\in(0,1)$ such that $F_{\lambda'}(x_0) = \a$ implying that $F_\lambda(x_0)>\a$ for all $\lambda\in(\lambda',1]$ and $q_\a(F_{\lambda}) = x_0$ for all $\lambda\in[\lambda',1]$. Assume that $V$ is a strict $\F$-identification function for $q_\a$. That means $\bar V(x_0,F_{\lambda})=0$ for all $\lambda\in[\lambda',1]$ and $\bar V(x_0,F_{\lambda})\neq0$ for all $\lambda \in[0,\lambda')$. Consider some $\lambda \in[\lambda',1]$. 
Then, 
\[
\bar V(x_0,F_\lambda) = (1-\lambda)\bar V(x_0,F_0) + \lambda \bar V(x_0,F_1) = (1-\lambda)\bar V(x_0,F_0)\neq0.
\]
This is a contradiction to $V$ being a strict $\F$-identification function for $q_\a$.}
\begin{prop}\label{prop:quantiles and metric order sensitivity}
Let $\alpha\in(0,1)$ and $\F$ be a family of distribution functions $F$ on $\R$ with unique $\alpha$-quantiles $q_\alpha(F)$ satisfying $F(q_\a(F)) = \a$ for all $F\in\F$.
Assume that there is an $F_0\in\F$, such that its translation $F_\lambda(\cdot) = F_0(\cdot - \lambda)$ is also an element of $\F$ for all $\lambda\in\R$. Consequently, $T_\alpha = q_\a\colon \F\to\A=\R$ is surjective. Under assumptions (V1) in \cite{FisslerZiegel2016} with respect to the strict identification function $V_\alpha\colon \R\times\R\to\R$, $V_\alpha(x,y) = \one\{y\le x\} - \alpha$, there is no strictly metrically $\F$-order-sensitive scoring function for $T_\alpha$ satisfying Assumption (S1) in \cite{FisslerZiegel2016}.
\end{prop}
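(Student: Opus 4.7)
The plan is to argue by contradiction. Suppose a strictly metrically $\F$-order-sensitive scoring function $S$ for $T_\alpha$ satisfying~(S1) exists. Following the proof of Proposition~\ref{prop:unique}, strict metrical order-sensitivity delivers the symmetry identity $\bar S(q_\alpha(F)+x,F)=\bar S(q_\alpha(F)-x,F)$ (for $k=1$ this is immediate from the definition, since $x$ and $2t-x$ are equidistant from $t$; cf.\ also Lemma~\ref{lem:metrical vs symmetry2}), and Osband's principle produces a strictly positive function $h\colon\R\to(0,\infty)$ with $\tfrac{d}{dx}\bar S(x,F) = h(x)(F(x)-\alpha)$ for all $F\in\F$ and $x\in\R$. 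Differentiating the symmetry identity in $x$ yields
\[
h(t+x)(F(t+x)-\alpha) + h(t-x)(F(t-x)-\alpha) = 0, \qquad t = q_\alpha(F).
\]

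Next I would exploit the translation family. Writing $q_0 := q_\alpha(F_0)$ and applying the identity above to $F=F_\lambda$, using $q_\alpha(F_\lambda)=q_0+\lambda$ and $F_\lambda(q_0+\lambda\pm x) = F_0(q_0\pm x)$, setting $A(x):=F_0(q_0+x)-\alpha$, $B(x):=\alpha-F_0(q_0-x)$, and letting $u:=q_0+\lambda$ range over $\R$, I obtain
\[
h(u+x)\,A(x) = h(u-x)\,B(x), \qquad u,x\in\R.
\]
Uniqueness of the $\alpha$-quantile of $F_0$ together with $F_0(q_0)=\alpha$ ensure $A(x),B(x)>0$ for every $x>0$, so the ratio $h(u+2x)/h(u)=B(x)/A(x)=:c(x)$ is independent of $u$. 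A standard iteration then gives the multiplicative Cauchy equation $c(x_1+x_2)=c(x_1)c(x_2)$ for $x_1,x_2>0$, and the regularity of $h$ entailed by (S1) upgrades $c$ to a measurable solution, hence $c(x)=e^{2ax}$ for some $a\in\R$ and $h(u)=h(0)e^{au}$.

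Substituting back yields the pointwise identity $e^{2ax}(F_0(q_0+x)-\alpha)=\alpha-F_0(q_0-x)$ on $\R$. Letting $x\to+\infty$, the right-hand side tends to $\alpha$, while the left-hand side tends to $+\infty$, $0$, or $1-\alpha$ according as $a>0$, $a<0$, or $a=0$. Since $\alpha\in(0,1)$ the only possibility is $a=0$ together with $\alpha=1/2$, so the argument is complete whenever $\alpha\neq 1/2$.

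For the borderline case $\alpha=1/2$ the translation family alone is insufficient, since every $F_0$ that is symmetric about $q_0$ satisfies the resulting constraint. I would therefore invoke the convexity of $\F$ built into Assumption~(V1): with $h$ constant the general symmetry identity forces every $F\in\F$ to satisfy $F(q_{1/2}(F)+x)+F(q_{1/2}(F)-x)=1$, i.e.\ to be symmetric about its median. Taking the mixture $G_\mu:=(1-\mu)F_0+\mu F_\lambda\in\F$ for some $\lambda\neq 0$ and $\mu\in(0,1)$ then produces a contradiction, because a superposition of two distributions of identical shape centred at different locations cannot be symmetric about any single point. The main obstacles I anticipate are the rigorous passage from the functional equation to the exponential form via the Cauchy argument under only the regularity of (S1), and the careful handling of the borderline $\alpha=1/2$ case, where the translation structure alone is inadequate and the convexity of $\F$ must be invoked.
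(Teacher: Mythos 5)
Your overall strategy coincides with the paper's: the symmetry identity from Lemma~\ref{lem:metrical vs symmetry2}, Osband's principle with $h>0$ (justified as in Proposition~\ref{prop:unique}), and the translation family $F_\lambda$ to turn the differentiated identity into a functional equation linking $h$ and $F_0$. Where you diverge is the analysis of that equation: the paper extracts the contradiction for $\alpha\neq 1/2$ and the constancy of $h$ for $\alpha=1/2$ by taking limits ($\lambda=\pm x\to\infty$, then $x\to\infty$ for fixed $\lambda$, then $\lambda\to\infty$ for fixed $x$), whereas you solve the equation exactly: since $A(x)=F_0(q_0+x)-\alpha>0$ and $B(x)=\alpha-F_0(q_0-x)>0$ for $x>0$ (correctly deduced from uniqueness of the quantile and $F_0(q_0)=\alpha$), the ratio $h(u+2x)/h(u)$ is independent of $u$, satisfies the multiplicative Cauchy equation, and is measurable (directly from $F_0$; no regularity of $h$ is actually needed here), giving $h(u)=h(0)e^{au}$; letting $x\to\infty$ in $e^{2ax}A(x)=B(x)$ then forces $a=0$ and $\alpha=1/2$. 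This is a valid and arguably cleaner route to the same intermediate conclusion, and it avoids having to argue separately that the limits $h(\pm\infty)$ exist.

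There is, however, a genuine flaw in your treatment of the remaining case $\alpha=1/2$. Your justification that the mixture $G_\mu=(1-\mu)F_0+\mu F_\lambda$ yields a contradiction rests on the claim that ``a superposition of two distributions of identical shape centred at different locations cannot be symmetric about any single point.'' This is false: for $\mu=1/2$ the mixture of $F_0$ (symmetric about $q_0$) and its translate $F_\lambda$ \emph{is} symmetric about the midpoint $q_0+\lambda/2$, as one checks directly from $F_0(q_0+u)+F_0(q_0-u)=1$. The paper is careful on exactly this point: it asserts only that \emph{some} convex combination with weight $\beta\neq 1/2$ fails to be symmetric about its median. So you must choose $\mu\neq 1/2$, and the non-symmetry then still requires a short argument (for instance, via characteristic functions: symmetry of $G_\mu$ about some $c$ would force $(1-\mu)+\mu e^{it\lambda}=(1-\mu)e^{ita}+\mu e^{itb}$ for suitable $a,b$ on a neighbourhood of $t=0$, hence everywhere by analyticity, which for $\mu\neq1/2$ forces $\lambda=0$; or, under a third-moment assumption, the nonvanishing third central cumulant proportional to $\mu(1-\mu)(1-2\mu)\lambda^3$). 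With that repair, your conclusion that convexity of $\F$ (entailed by Assumption (V1)) is violated goes through exactly as in the paper.
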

\begin{proof}
Assume that there exists a strictly metrically $\F$-order-sensitive scoring function $S_\alpha\colon \R\times \R\to\R$ satisfying Assumption (S1) in \cite{FisslerZiegel2016}. 
Due to Lemma \ref{lem:metrical vs symmetry2}, for any $F\in\F$ and any $x\in\R$
\[
\bar S_\alpha(T_\alpha(F) + x,F) = \bar S_\alpha(T_\alpha(F) - x,F). 
\]
Using Osband's principle \cite[Theorem 3.2]{FisslerZiegel2016} and taking the derivative with respect to $x$ on both sides, this yields
\begin{align}\label{eq:identity quantiles}
h(T_\alpha(F) + x) \bar V_\alpha(T_\alpha(F) +x,F) = -h(T_\alpha(F) -x) \bar V_\alpha(T_\alpha(F) -x,F)
\end{align}
for some positive function $h\colon \R\to\R$ (the fact that $h\ge0$ follows from the strict consistency of $S_\alpha$ and the surjectivity of $T_\alpha$, and $h>0$ follows like in the proof of Proposition \ref{prop:unique}). Assume that $T_\alpha(F_0)=0$. For $\lambda \in \R$, we have $T_\alpha(F_0(\cdot - \lambda)) = \lambda$. Therefore, \eqref{eq:identity quantiles} implies
\begin{equation}\label{eq:ratio2}
\frac{h(\lambda+ x)}{h(\lambda - x)} =  - \frac{\bar V_\alpha(\lambda - x,F_0(\cdot - \lambda))}{\bar V_\alpha(\lambda + x,F_0(\cdot - \lambda))}
=- \frac{F_0(- x) -\alpha}{F_0( x) -\alpha}.
\end{equation}

Setting $\lambda = \pm x$, one can see that $h(\pm\infty):=\lim_{x\to\pm\infty}h(x)$ exists and that $h(+\infty) = h(0)\alpha/(1-\alpha)$, $h(-\infty) = h(0)(1-\alpha)/\alpha$, hence, $h(+\infty)/h(-\infty) = 1$. On the other hand, for fixed $\lambda \in \R$, we obtain
\[
\frac{h(+\infty)}{h(-\infty)} = \lim_{x\to\infty} \frac{h(\lambda+ x)}{h(\lambda - x)} = \frac{\alpha}{1-\alpha}.
\]
As a consequence, the only remaining possibility is $\alpha=1/2$. For fixed $x \in \R$, we have
\[
1= \frac{h(+\infty)}{h(+\infty)} = \lim_{\lambda\to\infty} \frac{h(\lambda+ x)}{h(\lambda - x)} 
=- \frac{F_0(- x) - 1/2}{F_0( x) - 1/2}
\]
implying that $h$ must be constant using \eqref{eq:identity quantiles}, and that $F_0$ must be symmetric around its median, i.e.~$F_0(x) = 1 - F_0(-x)$ for all $x\in\R$.\footnote{This equation implies that $F_0$ is necessarily continuous. This fact also follows directly from Assumption (S1) in \cite{FisslerZiegel2016} and the assumption that $\F$ is closed under translations of $F_0$. Indeed, assume that $F_0$ is discontinuous at some point $x_0$. Then $h$ has to be discontinuous at that point. But since $F_0$ has at most countably many points of discontinuity, there is some $\lambda_0\in\R$ such that $F_{\lambda_0}$ is continuous at $x_0$. But this would imply that the derivative of $\bar S(\cdot,F_{\lambda_0})$ is discontinuous at $x_0$, which contradicts the assumptions. 
 }
Moreover, since $h$ is constant, \eqref{eq:identity quantiles} implies that also any other distribution $F\in\F$ must be symmetric around its median, i.e.~$F(T_{1/2}(F)+x) = 1 - F(T_{1/2}(F) - x)$ for all $x \in \R$. However, if $F_0$ is symmetric around its median, then any translation $F_\lambda$ of $F_0$ is symmetric around its median. But then, there is a convex combination of $F_0$ and $F_\lambda$ with mixture-parameter $\beta\in(0,1)$, $\beta\neq1/2$, such that $\beta F_0 + (1-\beta)F_\lambda$ is not symmetric around its median if $\lambda\neq0$. Consequently, the conditions of the proposition are violated such that a strictly metrically $\F$-order-sensitive function for the median does not exist in this setting.
\end{proof}


The reasons for the non-existence of a strictly metrically order-sensitive scoring function for the $\alpha$-quantile are of different nature in the two cases that $\alpha\neq1/2$ and that $\alpha=1/2$ in the proof of Proposition \ref{prop:quantiles and metric order sensitivity}. In both cases, we used Osband's principle to derive a representations of the derivative of the expected score. Assuming that the derivative has the form as stated in Osband's principle, one can directly derive a contradiction for $\alpha\neq1/2$. However, for $\alpha=1/2$, this form merely implies that the distributions in $\F$ must be symmetric around their medians. This is not contradictory to the form of the gradient derived via Osband's principle, but only to the assumption that $\F$ is convex. Dropping this assumption, we can derive the following Lemma. The proof is straight forward from Lemma \ref{lem:metrical vs symmetry2}. 

\begin{lem}\label{lem:absolute loss}
Let $\F$ be a family of distribution functions on $\R$ with unique medians $T_{1/2}\colon\F\to\R$ and finite first moments. 
If all distributions in $\F$ are symmetric around their medians in the sense that 
\be{eq:symmetry around median3}
F(T_{1/2}(F) +x) = 1 - F((T_{1/2}(F) - x)-)
\ee
for all $F\in\F$, $x\in\R$, then any scoring function that is equivalent to the absolute loss $S\colon\R\times\R\to\R$, $S(x,y) = |x-y|$, is strictly metrically $\F$-order-sensitive with respect to the median.
\end{lem}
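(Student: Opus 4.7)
The plan is to verify strict metric order-sensitivity directly from two properties of the expected absolute loss: symmetry of $\bar S(\cdot, F)$ about $t := T_{1/2}(F)$, and strict unimodality with minimum at $t$. Since equivalence of scoring functions preserves metric order-sensitivity (noted just after Definition~\ref{defn:Metrical order-sensitivity}), it suffices to treat $S(x,y) = |x-y|$ itself. Note that Lemma~\ref{lem:metrical vs symmetry2} cannot be invoked here, as we assume neither convexity of $\F$ nor surjectivity of $T_{1/2}$, so I proceed directly.

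First, I would establish the symmetry $\bar S(t+x,F) = \bar S(t-x,F)$ for every $F \in \F$ and $x \in \R$. The assumption \eqref{eq:symmetry around median3} is equivalent to the distributional identity $Y \stackrel{d}{=} 2t - Y$ for $Y \sim F$: it reads $\P(Y \le t+x) = \P(Y \ge t-x)$, which means $Y-t$ and $t-Y$ share a distribution. Substituting $2t-Y$ for $Y$ in $\bar S(t+x,F) = \E_F |t+x-Y|$ yields
\[
\bar S(t+x, F) \;=\; \E_F \bigl|t + x - (2t - Y)\bigr| \;=\; \E_F \bigl|Y - (t-x)\bigr| \;=\; \bar S(t-x, F).
\]

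Next, I would show that $\bar S(\cdot, F)$ is convex with a unique global minimum at $t$. Convexity follows because $x \mapsto |x - y|$ is convex for each $y$, and finite first moments ensure $\bar S(x, F) < \infty$ for every $x \in \R$. The classical subdifferential formula $\partial \bar S(\cdot, F)(x) = [2F(x-) - 1, \, 2F(x) - 1]$ shows that $0 \in \partial \bar S(\cdot, F)(x)$ iff $x$ is a median of $F$; uniqueness of the median therefore yields a unique minimizer at $t$. Any convex function on $\R$ with a unique minimum at $t$ is strictly decreasing on $(-\infty, t]$ and strictly increasing on $[t, \infty)$.

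Combining the two facts, $\bar S(x, F)$ depends on $x$ only through $|x - t|$ and is a strictly increasing function of that quantity; hence $|x - t| \le |z - t|$ implies $\bar S(x, F) \le \bar S(z, F)$, with strict inequality whenever the norm inequality is strict. I do not anticipate a serious obstacle: the key subtlety is that uniqueness of the median is essential, for otherwise the set of minimizers of $\bar S(\cdot, F)$ would be a non-degenerate interval and strict monotonicity on each side of $t$ would fail.
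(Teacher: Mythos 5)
Your proof is correct, and it is in fact more self-contained than the paper's, which disposes of this lemma in one line by declaring it ``straight forward from Lemma~\ref{lem:metrical vs symmetry2}''. You correctly observe that this citation is loose: the direction of Lemma~\ref{lem:metrical vs symmetry2} that turns the symmetry property into metrical order-sensitivity rests on Proposition~\ref{prop:decr paths}, and hence on convexity of $\F$, mixture-continuity and surjectivity of the functional --- exactly the hypotheses the lemma deliberately drops (the surrounding discussion even emphasizes that $\F$ cannot be convex here). You replace that machinery by what it would deliver in this one-dimensional special case: convexity of $x\mapsto \E_F|x-Y|$ (finite by the first-moment assumption), the subdifferential $[2F(x-)-1,\,2F(x)-1]$ identifying the unique median as the unique minimizer, and hence strict monotonicity of $\bar S(\cdot,F)$ on either side of $t$; combined with the symmetry $\bar S(t+x,F)=\bar S(t-x,F)$, obtained from $Y\stackrel{d}{=}2t-Y$, this gives that $\bar S(x,F)$ is a strictly increasing function of $|x-t|$, which is precisely strict metrical order-sensitivity, and the reduction to $S(x,y)=|x-y|$ itself via equivalence is legitimate since equivalence preserves (strict) metrical order-sensitivity. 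The paper's route is shorter on the page but borrows a lemma whose stated hypotheses do not hold in this setting; your route is fully rigorous as written and makes explicit where uniqueness of the median and integrability are used --- including the correct remark that without uniqueness the minimizing set would be an interval and strictness would fail.
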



As mentioned above, under the conditions of Lemma \ref{lem:absolute loss}, the necessary characterization of strictly consistent scoring functions via Osband's principle is not available. In particular, this means that we cannot use Proposition \ref{prop:unique}. Indeed, if the distributions in $\F$ are symmetric around their medians in the sense of \eqref{eq:symmetry around median3} and under the integrability condition that all elements in $\F$ have a finite first moment, the median and the mean coincide. Hence, any convex combination of a strictly consistent scoring function for the mean and the median provides a strictly consistent scoring function.
\emph{A fortiori}, any scoring function which is equivalent to $S(x,y) = (1-\lambda) |x-y| + \lambda|x-y|^2$, $\lambda\in[0,1]$ is strictly metrically $\F$-order-sensitive.
However, the class of strictly metrically $\F$-order-sensitive scoring functions is even bigger -- \citet[Corollary 7.19, p.~50]{LehmannCasella1998} show that (subject to integrability conditions) for an even and strictly convex function $\Phi\colon\R\to\R$, the score $S(x,y) = \Phi(x-y)$ is strictly metrically $\F$-order-sensitive for the median. Note that if the distributions in $\F$ are symmetric, their \emph{center of symmetry}, which is the functional solving \eqref{eq:symmetry around median3}, is unique \cite[Lemma 4.1.34]{Fissler2017}, even if the median is not unique. The result of \citet[Corollary 7.19, p.~50]{LehmannCasella1998} holds for this center of symmetry. Acknowledging that some popular choices for $\Phi$ are not strictly convex (see Example \ref{exmp: Huber loss}), the following proposition gives a refinement of their result.

\begin{prop}
Let $\F$ be a class of symmetric distributions on $\R$ with center of symmetry $C\colon\F\to\R$, that is, $F(C(F) +x) = 1 - F((C(F) - x)-)$ for all $F\in \F$, $x \in \R$. Let $\Phi\colon\R\to\R$ be a convex and even function, and $S\colon\R\times\R\to\R$, $S(x,y) = \Phi(x-y)$. For any $x\in\R$, define the function $\Psi_x\colon\R\to\R$, $\Psi_x(y) = \frac12 (\Phi(x-y) + \Phi(-x-y))$, and for $x,z\in\R$ the set
$M_{x,z} = \{y\in\R\colon \Psi_x(y) - \Psi_z(y)>0\}$.
If for all $F\in\F$ and for all $x,z\in\R$ with $|x|>|z|$ one has that $\P(Y - C(F)\in M_{x,z})>0$, $Y\sim F$, then $S$ is strictly metrically $\F$-order-sensitive for $C$. In particular, if for all $F\in\F$ and for all $x\neq0$ it holds that $\P(Y - C(F)\in M_{x,0})>0$, $Y\sim F$, then $S$ is strictly $\F$-consistent for $C$.
\end{prop}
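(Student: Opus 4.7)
The plan is to reduce everything to the centred random variable $U := Y - C(F)$, exploit that $U \stackrel{d}{=} -U$ in order to rewrite the expected score in terms of $\Psi_x$, and then read off monotonicity in $|x|$ from the convexity and evenness of $\Phi$.

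First I would observe that the defining equation of $C(F)$ is equivalent to $U \stackrel{d}{=} -U$ for $U = Y - C(F)$. Writing $\bar S(C(F)+x, F) = \E[\Phi(x - U)]$ and using $U \stackrel{d}{=} -U$ together with the evenness of $\Phi$, I would derive the identity
\[
\bar S(C(F)+x, F) \;=\; \E\!\left[\tfrac{1}{2}\bigl(\Phi(x-U) + \Phi(x+U)\bigr)\right]
\;=\; \E\!\left[\tfrac{1}{2}\bigl(\Phi(x-U) + \Phi(-x-U)\bigr)\right]
\;=\; \E[\Psi_x(U)].
\]
This representation is the workhorse of the argument and replaces the role played by Lemma~\ref{lem:metrical vs symmetry2} (which is not directly available since consistency of $S$ is not yet known).

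Next I would establish the key monotonicity lemma: for every fixed $y\in\R$, the map
\[
g_y\colon \R\to\R, \quad g_y(x) := \Phi(x-y) + \Phi(x+y) = 2\Psi_x(y)
\]
is convex (as a sum of convex functions) and even (indeed $g_y(-x) = \Phi(-x-y) + \Phi(-x+y) = \Phi(x+y) + \Phi(x-y) = g_y(x)$ by evenness of $\Phi$). An even convex function on $\R$ attains its minimum at the origin (since $g_y(0) \le \tfrac{1}{2}(g_y(a) + g_y(-a)) = g_y(a)$) and is therefore non-decreasing on $[0,\infty)$. Consequently, $|x| \ge |z|$ implies $\Psi_x(y) \ge \Psi_z(y)$ for every $y\in\R$, and the set on which strict inequality occurs is precisely $M_{x,z}$.

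Putting the two ingredients together, for $|x| > |z|$ the integrand $\Psi_x - \Psi_z$ is non-negative everywhere and strictly positive on $M_{x,z}$; the assumption $\P(U\in M_{x,z})>0$ then yields
\[
\bar S(C(F)+x, F) - \bar S(C(F)+z, F) \;=\; \E\bigl[\Psi_x(U) - \Psi_z(U)\bigr] \;>\; 0.
\]
The case $|x|=|z|$ with $z\neq x$ forces $z=-x$, and then $\Psi_{-x} = \Psi_x$ by evenness of $\Phi$, so $\bar S(C(F)+x,F) = \bar S(C(F)-x,F)$. Translating back via $x' = x - C(F)$, $z' = z - C(F)$, this is exactly strict metrical $\F$-order-sensitivity for $C$. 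For the ``in particular'' clause, I would simply specialise the argument to $z=0$: the hypothesis $\P(U\in M_{x,0})>0$ for every $x\neq 0$ yields $\bar S(C(F)+x, F) > \bar S(C(F), F)$, which is strict $\F$-consistency.

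The only subtle step is the monotonicity of $g_y$; the rest is bookkeeping and the change of variable to $U$. No serious obstacle is anticipated, and in particular no appeal to Osband's principle or to the regularity assumptions used for Proposition~\ref{prop:unique} is needed.
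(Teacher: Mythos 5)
Your proposal is correct and follows essentially the same route as the paper: rewrite $\bar S(C(F)+x,F)$ as $\E[\Psi_x(Y-C(F))]$ using $Y-C(F)\stackrel{d}{=}C(F)-Y$ and the evenness of $\Phi$, then use $\Psi_x\ge\Psi_z$ for $|x|>|z|$ together with $\P(Y-C(F)\in M_{x,z})>0$ to obtain the strict inequality, and take $z=0$ for consistency. The only difference is that you spell out, via the even-convex function $g_y(x)=2\Psi_x(y)$, the pointwise monotonicity $\Psi_x\ge\Psi_z$ that the paper simply asserts from convexity, and you treat the boundary case $|x|=|z|$ explicitly -- both welcome but inessential elaborations.
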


\begin{proof}
Let $|x|>|z|$. Note that due to the convexity of $\Phi$, it holds that $\Psi_x\ge \Psi_z$. Let $F\in\F$ with center of symmetry $c=C(F)$ and let $Y\sim F$. Then, using the fact that $\Phi$ is even and that $Y-c \stackrel{d}= c-Y$, one obtains
\begin{align*}
\bar S(c+x,F) - \bar S(c+z,F) 
&= \E_F[\Phi(x-(Y-c)) - \Phi(z-(Y-c))] \\
&= \E_F[\Psi_x(Y-c) - \Psi_z(Y-c)]>0\,.
\end{align*}
This shows the strict metrical $\F$-order-sensitivity. The strict $\F$-consistency follows upon taking $z=0$.
\end{proof}
If $\Phi$ is strictly convex then $M_{x,z}=\R$ for all $|x|>|z|$.

\begin{exmp}\label{exmp: Huber loss}
Let $\F$ be a class of symmetric distributions and $S(x,y) = \Phi(x-y)$. 
\begin{enumerate}[(i)]
\item
If $\Phi(t) = |t|^2$, the squared loss arises. Since $\Phi$ is strictly convex, the squared loss is strictly metrically $\F$-order-sensitive.
\item
For $\Phi(t) = |t|$, $S$ takes the form of the absolute loss. Then $S$ is strictly metrically $\F$-order-sensitive (and strictly $\F$-consistent) if and only if $C(F)\in\supp(F)$ for all $F\in\F$.\footnote{With the support of $F$ $\supp(F)$ we denote the support of the measure induced by $F$. In this context, $C(F)\in\supp(F)$ is equivalent to $F$ having a unique median.}
\item
Another prominent example of a metrically order-sensitive scoring function for the center of a symmetric distribution besides the absolute or the squared loss is the so-called \emph{Huber loss} which was presented in \cite{Huber1964} and arises upon taking $S(x,y) = \Phi(x-y)$ with
\[
\Phi(t) = \begin{cases}
\tfrac12 t^2, & \text{for } |t|<k, \\
k|t| - \tfrac12 k^2, & \text{for }|t|\ge k,
\end{cases}
\]
where $k\in\R$, $k\ge0$ is a tuning parameter. The Huber loss is strictly metrically $\F$-order-sensitive (strictly $\F$-consistent) if and only if $[C(F) - k, C(F) + k]\cap \supp(F)\neq\emptyset$ for all $F\in\F$.
\end{enumerate}
\end{exmp}


We emphasize that there are not only metrically-order sensitive strictly consistent scoring functions for the center of symmetric distributions. One can also use asymmetric scoring functions, for example those for the median or the mean, to elicit the center of symmetry.

Due to the negative result of Proposition \ref{prop:quantiles and metric order sensitivity} we dispense with an investigation of scoring functions that are metrically order-sensitive for vectors of different quantiles.

\subsubsection{Expectiles}

The special situation of the $1/2$-expectile, which coincides with the mean functional, was already considered in Subsection \ref{subsubsec:ratio of exp}, so let $\tau\neq1/2$. It is obvious that the canonical scoring function for the $\tau$-expectile, that is, the asymmetric squared loss $$S_\tau(x,y) = |\one\{y\le x\} - \tau|(x-y)^2$$ is not metrically order-sensitive since $x\mapsto S_\tau(x+y,y)$ is not an even function. 
\emph{A fortiori}, it turns out that (under some assumptions) there is no strictly metrically $\F$-order-sensitive scoring function for the $\tau$-expectile for $\tau\neq1/2$. 
\begin{prop}\label{prop:expectiles}
Let $\tau\in(0,1)$, $\tau\neq1/2$, and $T_\tau = \mu_\tau\colon\F\to\A\subseteq\R$, $\interior(\A)\neq\emptyset$ convex, be the $\tau$-expectile. Assume that $T_\tau$ is surjective, and that Assumption (V1) in \cite{FisslerZiegel2016} holds with respect to the strict $\F$-identification function $V_\tau(x,y) = 2|\one\{y\le x\} - \tau|\,(x-y)$. Suppose that $\bar V(\cdot, F)$ is twice differentiable for all $F\in\F$ and that there is a strictly $\F$-consistent scoring function $S\colon\A\times\R\to\R$ such that $\bar S(\cdot, F)$ is three times differentiable for all $F\in\F$. In particular, let each $F\in\F$ be differentiable with derivative $f = F'$.\\
If there is a $t\in\A$ and $F_1,F_2\in\F$ such that $T_\tau(F_1) = T_\tau (F_2) = t$, $F_1(t) = F_2(t)$, but $F'_1(t) = f_1(t)\neq f_2(t) = F'_2(t)$, then $S$ is not metrically $\F$-order-sensitive.
\end{prop}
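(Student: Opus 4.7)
The plan is to suppose for contradiction that $S$ is metrically $\F$-order-sensitive and to extract a pointwise identity at $t$ that depends on $F$ only through $F(t)$, then contradict this using $F_1,F_2$. First, I would invoke Lemma \ref{lem:metrical vs symmetry2}, which rewrites metrical order-sensitivity as the symmetry
\begin{equation*}
\bar S(t+x,F) = \bar S(t-x,F), \qquad F\in\F,\ t = T_\tau(F),
\end{equation*}
for all admissible $x$. Second, I would apply Osband's principle (Theorem 3.2 in \citet{FisslerZiegel2016}) to obtain a function $h\colon \interior(\A)\to\R$ with $\partial_x\bar S(x,F)= h(x)\bar V_\tau(x,F)$. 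As in the proof of Proposition \ref{prop:unique}, strict consistency of $S$ together with the orientation of $V_\tau$ forces $h>0$ on $\interior(\A)$; the three-fold differentiability of $\bar S(\cdot,F)$ and two-fold differentiability of $\bar V_\tau(\cdot,F)$ transfer to $h$ on the set where $\bar V_\tau(\cdot,F)\neq 0$ by solving for $h$, and in our argument only the values of $h$, $h'$ at $t$ enter.

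Next, the symmetry of $\bar S(t+\cdot,F)$ around $0$ forces all odd derivatives in $x$ at $x=0$ to vanish. The first-order condition just recovers $\bar V_\tau(t,F)=0$; the informative one is the third-order condition. Writing $\partial^3_x\bar S(x,F)$ in terms of $h$ and $\bar V_\tau$ via Leibniz and evaluating at $x=t$, the term involving $h''(t)$ is multiplied by $\bar V_\tau(t,F)=0$ and drops out, leaving
\begin{equation*}
2\,h'(t)\,\partial_x \bar V_\tau(t,F) + h(t)\,\partial^2_x \bar V_\tau(t,F) = 0.
\end{equation*}

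For the third step, I would compute the two derivatives of $\bar V_\tau(\cdot,F)$ explicitly. Using $V_\tau(x,y)=2|\mathbbm{1}\{y\le x\}-\tau|(x-y)$ and differentiating under the integral (using that $F$ is differentiable with density $f$), one obtains $\partial_x\bar V_\tau(y,F) = 2\tau+2(1-2\tau)F(y)$ and $\partial^2_x\bar V_\tau(y,F) = 2(1-2\tau)f(y)$. A short check shows $\tau+(1-2\tau)F(t)>0$ for every $\tau\in(0,1)$ and every $F(t)\in[0,1]$, so we may divide and rearrange the identity above to
\begin{equation*}
\frac{(1-2\tau)\,f(t)}{\tau+(1-2\tau)\,F(t)} = -\frac{2h'(t)}{h(t)}.
\end{equation*}
The right-hand side depends only on $t$. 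Plugging in $F_1$ and $F_2$, which share the value $t=T_\tau(F_i)$ and $F_1(t)=F_2(t)$, the left-hand sides reduce to $(1-2\tau)f_1(t)/D$ and $(1-2\tau)f_2(t)/D$ with identical denominator $D>0$. Since $\tau\neq 1/2$ and $f_1(t)\neq f_2(t)$, these differ, yielding the desired contradiction.

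The main technical obstacle I anticipate is the regularity needed to carry out the three-fold differentiation of the symmetry relation and to represent $\partial^3_x\bar S$ through Osband's principle; this is handled by the differentiability assumptions placed on $\bar S(\cdot,F)$ and $\bar V_\tau(\cdot,F)$ in the statement, together with the observation that $h''(t)$ enters only through $h''(t)\bar V_\tau(t,F)=0$, so that $h$ need only be $C^1$ in a neighborhood of $t$.
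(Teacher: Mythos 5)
Your proposal is correct and follows essentially the same route as the paper's proof: Lemma \ref{lem:metrical vs symmetry2} gives evenness of $x\mapsto \bar S(T_\tau(F)+x,F)$, Osband's principle supplies $h>0$ with $\partial_x\bar S=h\,\bar V_\tau$, and the vanishing of the third derivative at $0$ yields $2h'(t)\big(\tau+(1-2\tau)F(t)\big)+h(t)(1-2\tau)f(t)=0$, which $F_1,F_2$ contradict. The only cosmetic difference is that you isolate $-2h'(t)/h(t)$ and compare the left-hand sides, whereas the paper directly compares $g_{F_1}'''(0)$ and $g_{F_2}'''(0)$; the substance is identical.
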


\begin{proof}
Under the assumptions, Osband's principle yields the existence of a function $h\colon\interior(\A)\to\R$, $h>0$ (by an argument like in the proof of Proposition \ref{prop:unique}) such that for all $F\in\F$, $x\in\interior(\A)$
\[
\frac{\diff}{\diff x} \bar S(x,F) = h(x) \bar V(x,F).
\]
Using the same argument as in the proof of Osband's principle \cite[Theorem 3.2]{FisslerZiegel2016}, $h$ is twice differentiable. Assume that $S$ is metrically $\F$-order sensi{\-}tive. Then, due to Lemma \ref{lem:metrical vs symmetry2},
for any $F\in\F$ the function $g_F\colon \A\ni x\mapsto g_F(x) = \bar S(T_\tau(F)+x,F)$ is an even function. Hence, invoking the smoothness assumptions, the third derivative of $g_F$ must be odd. So necessarily $g_F'''(0)=0$. Denoting $t_F = T_\tau(F)$, some tedious calculations lead to
\be{eq:g_F}
g_F'''(0) = 2h'(t_F)\big(F(t_F) (1-2\tau) +\tau\big) + 2h(t_F) f(t_F) (1-2\tau).
\ee
Recalling that $h>0$ and $\tau\neq1/2$ implies $g_{F_1}'''(0)\neq g_{F_2}'''(0)$. So $S$ cannot be metrically $\F$-order-sensitive.
\end{proof}
%

Inspecting the proof of Proposition \ref{prop:expectiles}, equation \eqref{eq:g_F} yields for $\tau=1/2$
\[
g_F'''(0) = h'(t_F)
\]
for any $F\in\F$, $t_F = T_\tau(F)$. With the surjectivity of $T_\tau$ this proves that $h'=0$, such that $h$ is necessarily constant. Hence, we get an alternative proof that the squared loss is the only strictly metrically order-sensitive scoring function for the mean, up to equivalence.

\subsection{Order-sensitivity on line segments}

Recalling Lemma \ref{lem:implications}, every componentwise order-sensitive scoring function is also order-sensitive on line segments. 
However, for the particular class of \emph{linear functionals}, the following corollary shows that any strictly consistent scoring function is already strictly componentwise order-sensitive on line segments.\footnote{According to \cite{AbernethyFrongillo2012}, we call a functional $T\colon \F\to\A$ \emph{linear}, if it behaves linearly for mixtures of distributions. That is, for any $F,G\in\F$ such that $(1-\lambda)F+\lambda G\in\F$ for $\lambda\in[0,1]$ it holds that $T((1-\lambda)F+\lambda G) = (1-\lambda)T(F) + \lambda T(G)$. Examples of linear functionals are expectations of transformations, that is, $T(F) = \E_F[p(Y)]$ for some $\F$-integrable function $p\colon \O\to\R^k$.}

\begin{cor}\label{cor:linear functionals}
If $\F$ is convex and $T\colon \F\to\A\subseteq \R^k$ is linear and surjective, then any strictly $\F$-consistent scoring function for $T$ is strictly $\F$-order-sensitive on line segments.
\end{cor}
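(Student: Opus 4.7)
The plan is to invoke Proposition \ref{prop:decr paths} directly, exploiting the fact that linearity turns every mixture path $\gamma(\lambda) = T(\lambda F + (1-\lambda)G)$ into an actual straight line segment in $\A$. This way, travelling along an arbitrary ray $s \mapsto t + sv$ from $t = T(F)$ can be reparametrized as travelling along such a mixture path, and the monotonicity provided by Proposition \ref{prop:decr paths} translates directly into strict order-sensitivity on line segments.

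First I would note that linearity trivially implies mixture-continuity, since $\lambda \mapsto (1-\lambda)T(F) + \lambda T(G)$ is continuous on $[0,1]$; hence the hypotheses of Proposition \ref{prop:decr paths} are satisfied. Next, fix $F \in \F$, $t = T(F)$, and a unit vector $v \in \mathbb{S}^{k-1}$, and take $s_1, s_2 \in D$ with $s_1 < s_2$. The goal is to show $\bar S(t + s_1 v, F) < \bar S(t + s_2 v, F)$. By surjectivity, choose $G \in \F$ with $T(G) = t + s_2 v$, and define $\gamma(\lambda) = T(\lambda F + (1-\lambda)G)$. By linearity,
\[
\gamma(\lambda) = \lambda t + (1-\lambda)(t + s_2 v) = t + (1-\lambda)s_2 v,
\]
so $\gamma(0) = t + s_2 v$ and, setting $\lambda^* = 1 - s_1/s_2 \in (0,1]$, we obtain $\gamma(\lambda^*) = t + s_1 v$.

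Finally, since $s_1 \neq s_2$ we have $\gamma(0) \neq \gamma(\lambda^*)$, so Proposition \ref{prop:decr paths} yields the strict inequality
\[
\bar S(t + s_2 v, F) = \bar S(\gamma(0), F) > \bar S(\gamma(\lambda^*), F) = \bar S(t + s_1 v, F),
\]
which is precisely strict order-sensitivity on the line segment through $t$ in direction $v$. There is no genuine obstacle in the argument: the only subtle point is recognizing that linearity gives exact equality between $\gamma$ and the straight ray parametrization, so that Proposition \ref{prop:decr paths} applies without any further approximation or limiting argument.
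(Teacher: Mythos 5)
Your proof is correct and follows essentially the same route as the paper: linearity gives mixture-continuity, and the mixture path $\gamma(\lambda)=T(\lambda F+(1-\lambda)G)$ used in (the proof of) Proposition \ref{prop:decr paths} becomes, by linearity, exactly the line segment from $T(G)=t+s_2v$ to $t$, so its strict monotonicity property yields strict order-sensitivity on line segments. You merely spell out the reparametrization $\lambda^\ast = 1-s_1/s_2$ that the paper's terse proof leaves implicit.
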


\begin{proof}
The linearity of $T$ implies that $T$ is mixture-continuous. Then the assertion follows directly by Proposition \ref{prop:decr paths} and the special form of the image of the path $\gamma$ in the proof therein, which is a line segment.
\end{proof}

Corollary \ref{cor:linear functionals} immediately leads the way to the result that the class of strictly order-sensitive scoring functions on line segments is strictly bigger than the class of strict componentwise order-sensitive scoring functions (for some functionals with dimension $k\ge2$.) E.g.~consider a vector of expectations satisfying the conditions of Proposition \ref{prop:sum} which are the same as the ones in \citet[Proposition 4.4]{FisslerZiegel2016}. Due to the latter result, there are strictly consistent scoring functions -- and hence, with Corollary \ref{cor:linear functionals}, strictly order-sensitive on line segments -- which are not additively separable. By Proposition \ref{prop:sum} they cannot be strictly componentwise order-sensitive.

We can extend the result of Corollary \ref{cor:linear functionals} to the case of ratios of expectations with the same denominator. 

\begin{lem}\label{lem:ratioexp}
Let $T\colon \F\to\A\subseteq\R^k$ be a ratio of expectations with the same denominator, that is, $T(F) = \bar p(F) / \bar q(F)$ for some $\F$-integrable functions $p\colon \O\to\R^k$, and $q\colon\O\to\R$ where we assume that $\bar q(F)>0$ for all $F\in\F$ and that $\A$ is open and convex. Any scoring function of the form
\begin{equation}\label{eq:ratioexp}
S(x,y) = -\phi(x)q(y) + \nabla \phi(x)(q(y)x - p(y))
\end{equation}
is strictly $\F$-order sensitive on line segments, where $\phi$ is strictly convex differentiable function on $\A$. 
\end{lem}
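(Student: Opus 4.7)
The plan is to evaluate $\bar S(x,F)$ explicitly, recognise that after subtracting $\bar S(T(F),F)$ the excess expected score factors as the positive weight $\bar q(F)$ times a Bregman divergence of $t=T(F)$ from $x$, and then deduce order-sensitivity on line segments from a one-dimensional monotonicity statement along rays.

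First, using $\bar p(F)=\bar q(F)T(F)$, I would compute $\bar S(x,F)=\bar q(F)\bigl[-\phi(x)+\nabla\phi(x)\cdot(x-t)\bigr]$ with $t:=T(F)$, so that $\bar S(x,F)-\bar S(t,F)=\bar q(F)\bigl[\phi(t)-\phi(x)-\nabla\phi(x)\cdot(t-x)\bigr]=\bar q(F)\,D_\phi(t,x)$, where $D_\phi(t,x)$ is the Bregman divergence of $t$ from $x$. Strict convexity of $\phi$ together with $\bar q(F)>0$ immediately recovers strict $\F$-consistency. For order-sensitivity on line segments, I fix $F\in\F$, $v\in\mathbb S^{k-1}$ and set $\psi(s):=\phi(t+sv)$ on $D=\{s\ge 0:t+sv\in\A\}$. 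Since the restriction of a strictly convex differentiable function to a line is itself strictly convex and differentiable, with $\psi'(s)=\nabla\phi(t+sv)\cdot v$, the task reduces to showing that the one-variable function $G(s):=\psi(0)-\psi(s)+s\psi'(s)$ is strictly increasing on $D$.

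The main obstacle is that, without twice-differentiability of $\phi$, I cannot simply differentiate $G$ to read off $G'(s)=s\psi''(s)$. Instead, for $0\le s_1<s_2$ in $D$ I would rewrite the target inequality $G(s_1)<G(s_2)$ as $\psi(s_2)-\psi(s_1) < (s_2-s_1)\psi'(s_2) + s_1\bigl(\psi'(s_2)-\psi'(s_1)\bigr)$. The first summand on the right strictly dominates the left-hand side by the strict subgradient inequality at $s_2$ applied to the test point $s_1$ (strict because $s_1\neq s_2$), while the second summand is non-negative since $\psi'$ is non-decreasing by convexity and $s_1\ge 0$. This handles both $s_1=0$ and $s_1>0$ uniformly, yields strict monotonicity of $G$, and hence establishes strict $\F$-order-sensitivity of $S$ on line segments.
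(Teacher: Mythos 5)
Your proof is correct and is essentially the paper's own argument: both reduce to $\bar S(t+sv,F)=\bar q(F)\bigl(-\phi(t+sv)+s\,\nabla\phi(t+sv)v\bigr)$ and then bound the difference of scores at $0\le s_1<s_2$ by the strict subgradient inequality at the farther point plus the non-negativity of $s_1\bigl(\nabla\phi(t+s_2v)-\nabla\phi(t+s_1v)\bigr)v$, which is exactly the paper's decomposition written in one-dimensional notation. The Bregman-divergence packaging and the explicit appeal to monotonicity of $\psi'$ are only cosmetic differences.
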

\begin{proof}
Let $F\in\F$, $t=T(F)$, $v\in\mathbb S^{k-1}$ and $0 \le s < s'$ such that $t+sv, t+s'v\in \A$. Then $\bar S(t + sv,F) = \bar q(F)(-\phi(t + sv) + s\nabla \phi(t+sv) v)$. The subgradient inequality yields
\begin{align*}
\bar S(t + sv,F) - \bar S(t + s'v,F) &< \bar{q}(F)\big((s'-s)\nabla \phi(t + s'v) v  \\&\qquad\qquad+ s\nabla \phi(t + sv)v - s'\nabla \phi(t + s'v)v\big) \le 0.
\end{align*} 
\end{proof}

\citet[Proposition 4.4]{FisslerZiegel2016} shows that essentially all strictly consistent scoring functions for $T$ in the above Lemma \ref{lem:ratioexp} are of the form at \eqref{eq:ratioexp}; see also \citet[Theorem 13]{FrongilloKash2014b}.

Order-sensitivity on line segments is stable under applying an isomorphism via the revelation principle \cite[Theorem 4]{Gneiting2011}. However, dropping the linearity assumption on the bijection in the revelation principle, order-sensitivity on line segments is generally not preserved; see Subsection \ref{subsec:mean, variance}.

\begin{lem}\label{lem:isomorphism}
Let $S\colon \A\times \O\to\R$ be a (strictly) $\F$-order-sensitive scoring function on line segments for a functional $T\colon \F\to\A\subseteq \R^k$. Let $g\colon\A\to\A'\subseteq \R^k$ be an isomorphism where $\A'$ is the image of $\A$ under $g$. Then $S_g\colon \A'\times \O\to\R$ defined as $S_g(x',y) = S(g^{-1}(x'),y)$ is a (strictly) $\F$-order-sensitive scoring function on line segments for the functional $T_g = g\circ T\colon \F\to\A'$.
\end{lem}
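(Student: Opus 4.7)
My plan is to verify the definition of (strict) order-sensitivity on line segments for $S_g$ at $T_g$ by reducing it directly to the hypothesis on $S$ at $T$, exploiting the fact that a linear bijection sends line segments to line segments and that the reference point $T_g(F) = g(T(F))$ matches the parametrization. Concretely, I would fix $F \in \F$, set $t = T(F)$, $t' = T_g(F) = g(t)$, and an arbitrary $v' \in \mathbb{S}^{k-1}$, and consider the map
\[
\psi' \colon D' = \{s' \in [0,\infty) \colon t' + s'v' \in \A'\} \to \R, \quad \psi'(s') = \bar S_g(t' + s'v', F) = \bar S(g^{-1}(t' + s'v'), F).
\]

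Using the linearity of $g$ (and therefore of $g^{-1}$), I would compute $g^{-1}(t' + s'v') = t + s'\, g^{-1}(v')$. Since $g$ is a bijection and $v' \neq 0$, the vector $g^{-1}(v')$ is nonzero; writing $c := \|g^{-1}(v')\| > 0$ and $v := g^{-1}(v')/c \in \mathbb{S}^{k-1}$, this becomes $t + (s'c)\,v$. Hence $\psi'(s') = \psi(s' c)$, where $\psi(s) = \bar S(t + sv, F)$ is precisely the function in the definition of order-sensitivity on line segments for $S$ and $T$ at direction $v$. Moreover, bijectivity of $g$ gives that $s' \in D'$ if and only if $s' c \in D := \{s \ge 0 \colon t + sv \in \A\}$.

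By assumption, $\psi$ is (strictly) increasing on $D$, and multiplication by the positive constant $c$ is an order-preserving bijection between $D'$ and $D$, so $\psi'$ inherits (strict) monotonicity on $D'$. Since $v' \in \mathbb{S}^{k-1}$ was arbitrary, this establishes (strict) $\F$-order-sensitivity on line segments of $S_g$ for $T_g$.

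There is no substantial obstacle here; the single delicate point is the interpretation of "isomorphism" as a linear bijection, which is what makes $g^{-1}$ map the line segment $\{t' + s'v' \colon s' \ge 0\} \cap \A'$ onto a line segment through $t$ in $\A$ with a rescaled parametrization. The paragraph immediately preceding the lemma already flags that without linearity the conclusion fails, so I would not attempt any extension beyond linear $g$; the proof as sketched is essentially a one-line linear-algebra reparametrization argument.
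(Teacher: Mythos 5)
Your proposal is correct and follows essentially the same route as the paper's proof: compute $g^{-1}(t_g + sv) = t + s\,g^{-1}(v)$ by linearity, normalize $g^{-1}(v)$ to a unit direction, and note that the positive rescaling of the parameter preserves (strict) monotonicity. Your version merely spells out the domain correspondence and the reparametrization constant a bit more explicitly than the paper does.
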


\begin{proof}
Let $F\in \F$, $t = T(F)$ and $t_g = T_g(F) = g(t)$. Let $v\in \mathbb S^{k-1} $ and $s\in[0,\infty)$. 
Using the linearity of $g^{-1}$ we get
\[
\bar S_g(t_g + sv,F) = \bar S\big(g^{-1}(g(t) + sv),F\big) = \bar S(t + sg^{-1}(v),F).
\]
Since also $g$ is an isomorphism, we have that $g^{-1}(v)/\|g^{-1}(v)\|\in \mathbb S^{k-1}$. Hence, the map $s\mapsto \bar S_g(t_g + sv,F)$ is (strictly) increasing for all $v\in \mathbb S^{k-1}$ if $S$ is (strictly) order-sensitive on line segments.
\end{proof}

\subsubsection{The pair (mean, variance)}\label{subsec:mean, variance}

The pair (mean, variance) is of importance not only from an applied point of view but it is also an interesting example in the theory about elicitability.
Due to the lack of convex level sets, variance is not elicitable \cite[Theorem 6]{Gneiting2011}. However, the pair (mean, variance) is a bijection of the (elicitable) pair (mean, second moment), and, invoking the revelation principle \cite[Theorem 4]{Gneiting2011}, variance is \emph{jointly} elicitable with the mean. The revelation principle provides an explicit link between the class of strictly consistent scoring functions for the first two moments which are of Bregman-type \cite[Proposition 4.4]{FisslerZiegel2016} and the respective class for mean and variance.

As the pair (mean, variance) has of a non-elicitable component, if fails to be component\-wise order-sensitive (Lemma \ref{lem:components}) and therefore, it is also not metrically order-sensitive. \textit{A priori}, order-sensitivity on line segments is not ruled out. 
Corollary \ref{cor:linear functionals} implies that any strictly consistent scoring function for the pair of the first and second moment is order-sensitive on line segments.
Even though the bijection connecting (mean, variance) with the pair of the first two moments is not linear, and, hence, we cannot apply Lemma \ref{lem:isomorphism}, the following proposition gives necessary and sufficient conditions for scoring functions to be order-sensitive on line segments for  (mean, variance). Example \ref{exmp:os for mean, variance} shows the existence of order-sensitive scoring functions on line segments for (mean, variance).

\begin{prop}\label{prop:mean variance os}
Let $\F$ be a class of distributions on $\R$ with finite second moments such that the functional $T = (\textup{mean, variance})\colon \F\to\A$ is surjective on $\A = \R\times (0,\infty)$.
Let Assumptions (F1) and (V1) from \cite{FisslerZiegel2016} be satisfied with the strict $\F$-identification function $V\colon\A\times \R\to\R^2$, $V(x_1,x_2,y) = \big(x_1 - y, x_2 + x_1^2 - y^2 \big)^\top$. 
Let $S\colon\A\times \R\to\R$ be a scoring function that is (jointly) continuous and for any $y\in\R$, the function $\A\ni x\mapsto S(x,y)$ be twice continuously differentiable. Then $S$ is $\F$-order-sensitive on line segments for $T$ if and only if $S$ is of the form
\be{eq:S mean variance}
S(x_1,x_2,y) = -\phi(x_1,x_2+x_1^2) + \nabla \phi(x_1,x_2+x_1^2)
\begin{pmatrix}
x_1 - y \\
x_2+x_1^2 - y^2
\end{pmatrix} + a(y),
\ee
where $a\colon\R\to\R$ is some $\F$-integrable function and $\phi\colon\A' \to\R$, $\A' = \{(x_1,x_2+x_1^2)\in\R^2\,|\,x\in\A\} = \{(m_1,m_2)\in\R^2\,|\,m_1^2<m_2\}$, is a convex, three times continuously differentiable function such that the second order partial derivatives $\phi_{ij}:=\partial_i \partial_j \phi$ satisfy 
\begin{align}
\phi_{12}(m_1,m_2) &= -2m_1\phi_{22}(m_1,m_2) \label{eq:condition eq}\\
\phi_{11}(m_1,m_2)&\ge (m_2 + 3m_1^2)\phi_{22}(m_1,m_2)
\label{eq:condition ineq}
\end{align}
for all $(m_1,m_2)\in\A'$.
\end{prop}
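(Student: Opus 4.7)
The plan is to combine the revelation principle with a direct computation of the derivative $\psi'(s) = \tfrac{d}{ds}\bar S(t + sw, F)$ and a change of variables that makes the two conditions visible. Since order-sensitivity on line segments implies $\F$-consistency by Lemma~\ref{lem:implications}, and since $(\text{mean, variance})$ is obtained from the ratio of expectations $(\text{mean, second moment})$ via the bijection $g(x_1,x_2) = (x_1, x_2 + x_1^2)$, the revelation principle \citep[Theorem~4]{Gneiting2011} combined with \citet[Proposition~4.4]{FisslerZiegel2016} forces $S$ to be of the form \eqref{eq:S mean variance} with some convex, three-times continuously differentiable $\phi\colon \A'\to\R$, plus an additive $\F$-integrable function $a(y)$ which does not affect order-sensitivity. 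It remains to identify when such an $S$ is actually order-sensitive on line segments.

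Differentiating \eqref{eq:S mean variance} and exploiting the Bregman-type cancellations yields
\begin{align*}
\partial_{x_1}\bar S(x,F) &= [\phi_{11} + 2x_1 \phi_{12}](x_1-m_1) + [\phi_{12}+2x_1\phi_{22}](x_2+x_1^2-m_2),\\
\partial_{x_2}\bar S(x,F) &= \phi_{12}(x_1-m_1) + \phi_{22}(x_2+x_1^2-m_2),
\end{align*}
with $\phi_{ij}$ evaluated at $(x_1, x_2+x_1^2)$ and $(m_1,m_2)=(m_1,v+m_1^2)$. Setting $x = t + sw$ with $t = (m_1,v)$, using $x_1 - m_1 = sw_1$ and $x_2 + x_1^2 - m_2 = s(\alpha + sw_1^2)$ with $\alpha = w_2 + 2m_1 w_1$, and contracting with $w$ gives
\[
\psi'(s) = s\bigl[ w_1^2 \phi_{11} + w_1(2\alpha + 3sw_1^2)\phi_{12} + (\alpha+sw_1^2)(\alpha+2sw_1^2)\phi_{22} \bigr],
\]
the $\phi_{ij}$ now evaluated at $(\mu,\nu) := (m_1+sw_1,\, m_2+s\alpha+s^2w_1^2)$.

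Next, I change variables to $(\mu,\nu,\tilde w_1, \tilde w_2)$ with $\tilde w_1 = w_1$ and $\tilde w_2 = w_2 + 2\mu w_1 = \alpha + 2sw_1^2$. Substituting $\alpha + sw_1^2 = \tilde w_2 - sw_1^2$ and $2\alpha + 3sw_1^2 = 2\tilde w_2 - sw_1^2$ decouples the expression cleanly into
\[
\psi'(s)/s \;=\; H_\phi(\mu,\nu)[\tilde w_1,\tilde w_2] \;-\; s\,\tilde w_1^2 \bigl[\tilde w_1 \phi_{12}(\mu,\nu) + \tilde w_2 \phi_{22}(\mu,\nu)\bigr],
\]
where $H_\phi[a,b] = \phi_{11}a^2 + 2\phi_{12}ab + \phi_{22}b^2$. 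Surjectivity of $T$ makes $(\mu,\nu)$ sweep out $\A'$, and the constraint $v + sw_2 > 0$ translates into $s(\tilde w_2 - 2\mu \tilde w_1) < \nu - \mu^2$: the admissible range of $s$ is $[0,\infty)$ when $\tilde w_2 \le 2\mu \tilde w_1$, and $[0,s_{\max})$ with $s_{\max} = (\nu-\mu^2)/(\tilde w_2 - 2\mu \tilde w_1)$ otherwise. Requiring $\psi'/s \ge 0$ for arbitrarily large $s$ in the first regime forces $\tilde w_1 \phi_{12} + \tilde w_2 \phi_{22} \le 0$ on $\{\tilde w_2 \le 2\mu \tilde w_1,\ \tilde w_1 \neq 0\}$; letting $\tilde w_2 \to 2\mu \tilde w_1$ from below with $\tilde w_1 = \pm 1$ yields the equality $\phi_{12}(\mu,\nu) + 2\mu\phi_{22}(\mu,\nu) = 0$, i.e.\ condition \eqref{eq:condition eq}. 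Substituting this back, a short algebraic rearrangement gives
\[
\psi'(s_{\max})/s_{\max} = \bigl[\phi_{11} - (\nu+3\mu^2)\phi_{22}\bigr]\tilde w_1^2 + \phi_{22}(\tilde w_2 - 2\mu \tilde w_1)^2,
\]
which is non-negative for all admissible $(\tilde w_1,\tilde w_2)$ if and only if $\phi_{11}(\mu,\nu) \ge (\nu + 3\mu^2)\phi_{22}(\mu,\nu)$, i.e.\ condition \eqref{eq:condition ineq}. Conversely, plugging \eqref{eq:condition eq} and \eqref{eq:condition ineq} back into the displayed formula for $\psi'/s$ verifies non-negativity throughout the admissible range (note that \eqref{eq:condition eq} and \eqref{eq:condition ineq} also force $\phi_{22}\ge 0$ and in fact imply the convexity of $\phi$, since $\nu > \mu^2$).

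The main obstacle is the non-linearity of $g$: straight lines in $\A$ map to parabolic arcs in $\A'$, so Lemma~\ref{lem:isomorphism} cannot directly transport order-sensitivity on line segments from (mean, second moment) to (mean, variance). The decisive trick is the change of variables to the ``terminal point'' $(\mu,\nu)$ together with the shifted direction $\tilde w_2 = w_2 + 2\mu w_1$, which flattens these parabolic effects and cleanly separates a Hessian quadratic form from a residual linear in $s$. Once this separation is achieved, the two conditions emerge naturally from examining $\psi'(s)/s$ in the limit $s\to\infty$ (giving \eqref{eq:condition eq}) and at $s = s_{\max}$ (giving \eqref{eq:condition ineq}).
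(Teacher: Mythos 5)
Your proposal is correct and takes essentially the same route as the paper: the same reduction via Lemma \ref{lem:implications}, the revelation principle and \citet[Proposition 4.4]{FisslerZiegel2016} to obtain the form \eqref{eq:S mean variance}, and your change of variables $(\mu,\nu,\tilde w_2 = w_2+2\mu w_1)$ reproduces exactly the paper's decomposition of $\tfrac{\diff}{\diff s}\bar S(t+sv,F)$ into a Hessian quadratic form in the shifted direction $(v_1, v_2+2v_1\bar s_1)$ plus terms linear in $s$, followed by the same limiting arguments (unbounded admissible $s$ for \eqref{eq:condition eq}; $s\uparrow s_{\max}$ together with $w_2\to 0$ for \eqref{eq:condition ineq}). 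The only blemish is the parenthetical claim that \eqref{eq:condition eq} and \eqref{eq:condition ineq} alone force $\phi_{22}\ge 0$ and convexity of $\phi$: this is false (e.g.\ $\phi(m_1,m_2)=-(m_2-m_1^2)^2$ satisfies both conditions on $\A'$ but has $\phi_{22}=-2$), yet it is harmless, since convexity is part of the stated form in the sufficiency direction and is supplied by \citet[Proposition 4.4]{FisslerZiegel2016} in the necessity direction.
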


\begin{proof}
Let $S$ be $\F$-order-sensitive on line segments. This implies that $S$ is $\F$-consistent. 
Using the revelation principle, $S'\colon\A'\times\R\to\R$, 
\be{eq:revelation}
S'(m_1,m_2,y) = S(m_1,m_2-m_1^2,y)
\ee
is an $\F$-consistent scoring function for $T' = (T_1, T_2 + T_1^2)\colon\F\to\A'$, the pair of the first and second moment.
Moreover, $S'$ fulfils the same regularity conditions as $S$. 
\citet[Proposition 4.4]{FisslerZiegel2016} holds \textit{mutatis mutandis} also for consistent scoring functions with $\phi$ convex. It is straight forward to check that the conditions for \citet[Proposition 4.4]{FisslerZiegel2016} are fulfilled for $S'$ and $T'$ with the canonical identification function $V'\colon\A'\times\R\to\R^2$, $V'(m_1,m_2,y) = \big(m_1 - y, m_2 - y^2 \big)^\top$. Hence, $S'$ is necessarily of the form
\[
S'(m_1,m_2,y) = -\phi(m_1,m_2) + \nabla \phi(m_1,m_2)
\begin{pmatrix}
m_1 - y \\
m_2 - y^2
\end{pmatrix} + a(y),
\]
where $a\colon\R\to\R$ is some $\F$-integrable function and $\phi\colon\A'\to\R$ is a convex $C^3$-function with gradient $\nabla\phi$ (considered as a row vector) and Hessian $\nabla^2 \phi = (\phi_{ij})_{i,j = 1,2}$. 
In summary, \eqref{eq:revelation} yields the form at \eqref{eq:S mean variance}.\\
Now, we verify conditions \eqref{eq:condition eq} and \eqref{eq:condition ineq}. 
Let $F\in\F$, with $(t_1,t_2)=T(F)$. For $v\in\R^2$, $\|v\|=1$, $s\in \R$ with $\bar s:=t+sv\in\A$, it holds that 
\begin{align} \nonumber 
\frac{\diff}{\diff s}\bar S(t+sv,F)  
&=s\,(v_1,v_2 +2v_1\bar s_1)\nabla^2\phi\big(\bar s_1, \bar s_2 + \bar s_1^2\big)
\begin{pmatrix}
v_1 \\ v_2 +2v_1\bar s_1 -sv_1^2
\end{pmatrix} \\ \label{eq:term1}
&= s\,(v_1,v_2 +2v_1\bar s_1)\nabla^2\phi\big(\bar s_1, \bar s_2 + \bar s_1^2\big)
\begin{pmatrix}
v_1 \\ v_2 +2v_1\bar s_1
\end{pmatrix} \\\label{eq:term2}
&\quad-s^2 v_1^3 \left(\phi_{12}\big(\bar s_1, \bar s_2 + \bar s_1^2\big) +2\bar s_1\phi_{22}\big(\bar s_1, \bar s_2 + \bar s_1^2\big)\right) \\\label{eq:term3}
&\quad-s^2 v_1^2v_2\, \phi_{22}\big(\bar s_1, \bar s_2 + \bar s_1^2\big).
\end{align}
Since $(\phi_{ij})_{i,j=1,2}$ is positive semi-definite, the term at \eqref{eq:term1} is non-negative and the term at \eqref{eq:term3} has the sign of $-v_2$.
Consider $v = (1,0)^\top$. Due to the surjectivity of $T$ it holds that for all $c_1\in\R$, $c_2>0$, $s\in\R$ there exists a distribution $F^+ \in\F$ such that $T_1(F^+) +s = c_1$ and $T_2(F^+)=c_2$. Hence, 
\begin{multline}\label{eq:S1}
s^{-1}\frac{\diff}{\diff s}\bar S(t+sv,F^+) 
=( 1,2c_1)\nabla^2\phi\big(c_1,c_2 + c_1^2\big)
\begin{pmatrix}
1 \\  2c_1
\end{pmatrix} \\ 
 - s\left( \phi_{12} \big(c_1,c_2 + c_1^2\big)  + 2c_1\phi_{22}\big(c_1,c_2 + c_1^2\big)\right).
\end{multline}
Due to the $\F$-order-sensitivity of $S$, the term on the left-hand side of \eqref{eq:S1} is non-negative for all $s\in\R$. 
Since $|s|$ can be arbitrarily large, the term $\phi_{12} \big(c_1,c_2 + c_1^2\big)  + 2c_1\phi_{22}\big(c_1,c_2 + c_1^2\big)$ must vanish and we obtain \eqref{eq:condition eq}.

Finally, let $v$ be such that $v_1,v_2\neq0$ and w.l.o.g.\ $v_2 > 0$. Then 
\begin{align*}
&s^{-1}\frac{\diff}{\diff s}\bar S(\bar s,F)  
= (v_1,v_2 +2v_1\bar s_1)\nabla^2\phi\big(\bar s_1, \bar s_2 + \bar s_1^2\big)
\begin{pmatrix}
v_1 \\ v_2 +2v_1\bar s_1
\end{pmatrix}
-s v_1^2v_2\, \phi_{22}\big(\bar s_1, \bar s_2 + \bar s_1^2\big) \\
&= v_1^2 \phi_{11} \big(\bar s_1, \bar s_2 + \bar s_1^2\big) - sv_1^2v_2 \phi_{22}\big(\bar s_1, \bar s_2 + \bar s_1^2\big)\\
&+ (v_2 + 2v_1\bar s_1)\big[ 2v_1 \phi_{12}\big(\bar s_1, \bar s_2 + \bar s_1^2\big) +4v_1\bar s_1 \phi_{22}\big(\bar s_1, \bar s_2 + \bar s_1^2\big) + (v_2-2v_1\bar s_1)\phi_{22}\big(\bar s_1, \bar s_2 + \bar s_1^2\big)\big] \\
&=v_1^2 \Big(\phi_{11} \big(\bar s_1, \bar s_2 + \bar s_1^2\big) 
- (sv_2 + 4\bar s_1^2)\phi_{22}\big(\bar s_1, \bar s_2 + \bar s_1^2\big)  \Big) + v_2^2 \phi_{22}\big(\bar s_1, \bar s_2 + \bar s_1^2\big) .
\end{align*}
Due to the surjectivity of $T$ it holds that for all $c_1\in\R$, $c_2>0$, $s< c_2/v_2$ there exists a distribution $F^+ \in\F$ such that $T_1(F^+) +sv_1 = c_1$ and $T_2(F^+) + sv_2=c_2$. Consequently, one obtains the lower bound
\[
s^{-1}\frac{\diff}{\diff s}\bar S(\bar s,F^+)\ge v_1^2 \Big(\phi_{11} \big(c_1, c_2 + c_1^2\big) - (c_2 + 4c_1^2)\phi_{22}\big(c_1, c_2 + c_1^2\big)\Big) + v_2^2 \phi_{22}\big(c_1, c_2 + c_1^2\big)
\]
and this bound is asymptotically attained for $s \uparrow c_2/v_2$. As $v_2$ can be arbitrarily small, it is necessary and sufficient for order sensitivity on line segments that 
the map $\A\ni (c_1,c_2)\mapsto \phi_{11} \big(c_1, c_2 + c_1^2\big) - (c_2 + 4c_1^2)\phi_{22}\big(c_1, c_2 + c_1^2\big)$ is non-negative which is equivalent to \eqref{eq:condition ineq}. The reverse direction follows with analogous considerations.
\end{proof}
\
\begin{exmp}\label{exmp:os for mean, variance}
An example for a class of strictly convex $C^3$-function $\phi\colon\A'\to\R$ satisfying \eqref{eq:condition eq} and \eqref{eq:condition ineq} with equality is given by 
\[
\phi(m_1,m_2) = \big(m_2 - m_1^2\big)^{-1} +b_1m_1 + b_2m_2 +b_3, \qquad b_1,b_2,b_3\in\R.
\]
For the case $b_1 = b_2 = b_3 =0$, the resulting scoring function of the form at \eqref{eq:S mean variance} is 
\be{eq:S mean variance hom}
S(x_1,x_2,y) = x_2^{-2}\big(x_1^2 - 2x_2 - 2x_1y + y^2\big).
\ee
Interestingly, this results not only in an order-sensitive scoring function on line segments for the pair (mean, variance), but it is also a mixed positively homogeneous scoring function of degree $-2$; see Section \ref{sec:homog}.
\end{exmp}

\subsubsection{The pair (Value at Risk, Expected Shortfall)}

Value at Risk (VaR) and Expected Shortfall (ES) are popular risk measures in banking and insurance. 
For a financial position $Y$ with distribution $F$ and a level $\alpha \in(0,1)$, they are defined as 
\begin{align*}
\VaR_\alpha(F) &:= F^{-1}(\alpha) = \inf\{x \in \mathbb{R} : F(x) \ge \alpha\}, \\
\ES_\alpha(F) &:= \frac{1}{\alpha}\int_0^\alpha \VaR_\beta(F) \,\mathrm d \beta \\
&= \frac{1}{\a}\E_F[Y\,\one\{Y\le \VaR_\alpha(F)\}]  +\frac{1}{\a} \VaR_\a(F)
\big(\a- F(\VaR_\a(F))\big)\,.
\end{align*}
Note that if $F$ is continuous at $\VaR_\a(F)$, that means, if $F(\VaR_\a(F))=\a$, one can write $\ES_\a(F)$ equivalently as $\E_F[Y\,|\,Y\le \VaR_\a(F)]$.
Our sign convention implies that risky positions yield large \emph{negative} values of $\text{VaR}_\alpha$ or $\text{ES}_\alpha$. Intuitively, $\text{VaR}_\alpha$ gives the worst loss out of the best $(1-\alpha)\times 100\%$ of all cases, whereas $\text{ES}_\alpha$ gives the average loss given one exceeds $\text{VaR}_\alpha$. Merits and pitfalls of these two important risk measures are discussed in \citet{EmbrechtsPuccettiETAL2014,EmbrechtsHofert2014} where numerous further references are given.

$\text{VaR}_\alpha$, as a quantile, is elicitable under mild regularity conditions, whereas $\text{ES}_\alpha$ fails to be elicitable \citep{Gneiting2011}. However, recently it was shown in \citet[Theorem 5.2 and Corollary 5.5]{FisslerZiegel2016} that the pair $(\VaR_\a, \ES_\a)$ is elicitable and the class of strictly convex scoring functions was characterized to be of the form \eqref{eq:S_VaR,ES} (under the conditions of Osband's principle, \citet[Theorem 3.2, Corollary 3.3]{FisslerZiegel2016}).
Note that the proof of \citet[Theorem 5.2(ii) and Corollary 5.5]{FisslerZiegel2016} is imprecise for the case that a distribution $F \in \mathcal{F}$ is not continuous at its $\alpha$-quantile. However, the arguments are easily adapted and the result holds as stated.

\begin{prop}\label{prop:os line segments VaR, ES}
Let $\a\in(0,1)$, $\F$ be a class of continuously differentiable distribution functions on $\R$ with finite first moments and unique $\a$-quantiles. Let $\mathsf{A}\subseteq \{(x_1,x_2)\in\mathbb R^2\colon x_1\ge x_2\}$ be convex. Define $\mathsf A_2$ as the projection of $\mathsf A$ onto the second coordinate axis and let $S:\mathsf A \times \R \to \R$ be a scoring function of the form 
\begin{align}\label{eq:S_VaR,ES}
S(x_1,x_2,y) &= \big(\one\{y\le x_1\} - \alpha\big)g(x_1) - \one\{y\le x_1\}g(y)\\ \nonumber
&+ \phi'(x_2)\Big(x_2 +\big(\one\{y\le x_1\} - \alpha\big)\frac{x_1}{\alpha} - \one\{y\le x_1\}\frac{y}{\alpha} \Big) 
- \phi(x_2),
\end{align}
with $g:\R \to \R$ differentiable and increasing and $\phi:\mathsf A_2 \to \R$ twice differentiable, and $\phi'>0, \phi''>0$. If
\begin{equation}\label{eq:sufficient2 prop}
\phi'(x) + (x-z)\phi''(x) \ge 0, \quad \text{for all $x,z \in \mathsf{A}_2$,}
\end{equation}
then $S$ is strictly $\F$-order-sensitive on line segments for $(\VaR_\a, \ES_\a)$.
\end{prop}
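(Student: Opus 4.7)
The plan is, having fixed $F \in \F$ with density $f$, to abbreviate $t = (t_1, t_2) = (\VaR_\alpha(F), \ES_\alpha(F))$, $u = t_1 + sv_1$, $w = t_2 + sv_2$, and $\psi(s) = \bar S(t+sv, F)$, and to show directly that $\psi'(s) > 0$ for every $s > 0$ in the domain $D$. This would follow from a three-summand decomposition of $\psi'(s)$ whose first two pieces are manifestly non-negative and whose third piece is controlled by the assumption \eqref{eq:sufficient2 prop}.

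First, I would compute the gradient of $\bar S$. Plugging \eqref{eq:S_VaR,ES} into $\bar S$ and differentiating under the integral (using continuity of $f$) cancels the $f(x_1)g(x_1)$ and $f(x_1)x_1/\alpha$ contributions and yields
\[
\partial_{x_1}\bar S(x_1, x_2, F) = (F(x_1) - \alpha)\bigl[g'(x_1) + \phi'(x_2)/\alpha\bigr], \qquad \partial_{x_2}\bar S(x_1, x_2, F) = \phi''(x_2)\bigl[x_2 - \mathrm{ES}_F(x_1)\bigr],
\]
where $\mathrm{ES}_F(u) := u + (1/\alpha)\E_F[(Y-u)\one\{Y \le u\}]$ satisfies $\mathrm{ES}_F(t_1) = t_2$ and $\mathrm{ES}_F'(u) = 1 - F(u)/\alpha$. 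The fundamental theorem of calculus then rewrites the identification-style term as $w - \mathrm{ES}_F(u) = sv_2 + (1/\alpha)\int_{t_1}^u (F(\xi) - \alpha)\,d\xi$, and the chain rule gives
\[
\psi'(s) = v_1(F(u)-\alpha)g'(u) + sv_2^2\phi''(w) + \frac{1}{\alpha}\Bigl[v_1(F(u)-\alpha)\phi'(w) + v_2\phi''(w)\int_{t_1}^u (F(\xi)-\alpha)\,d\xi\Bigr].
\]

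Because $F$ is non-decreasing with $F(t_1) = \alpha$, both $v_1(F(u)-\alpha)$ and $\int_{t_1}^u (F(\xi)-\alpha)\,d\xi$ carry the sign of $sv_1^2$ and are therefore non-negative; combined with $g' \ge 0$ and $\phi'' > 0$, the first two summands of $\psi'(s)$ are $\ge 0$, and the bracket is trivially $\ge 0$ whenever $v_2 \ge 0$. The delicate case is $v_2 < 0$. There, I would first use that $\tau \mapsto v_1(F(u_\tau) - \alpha)$ is non-decreasing on $[0, s]$ (immediate from monotonicity of $F$) to derive the estimate
\[
\int_{t_1}^u (F(\xi) - \alpha)\,d\xi = v_1\int_0^s (F(u_\tau) - \alpha)\,d\tau \le sv_1(F(u) - \alpha),
\]
and then apply \eqref{eq:sufficient2 prop} at $(x, z) = (w, t_2) \in \mathsf{A}_2 \times \mathsf{A}_2$ to obtain $\phi'(w) \ge (t_2 - w)\phi''(w) = s|v_2|\phi''(w)$. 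Chaining these two inequalities bounds $|v_2|\phi''(w)\int_{t_1}^u (F(\xi)-\alpha)\,d\xi$ by $v_1(F(u)-\alpha)\phi'(w)$, so the bracket is $\ge 0$ after all.

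For strict positivity when $s > 0$ I would split on $v_2$: if $v_2 \ne 0$, the middle term $sv_2^2\phi''(w)$ is already strictly positive; if $v_2 = 0$, then $v_1 = \pm 1$ and $u \ne t_1$, so uniqueness of the $\alpha$-quantile (combined with continuity of $F$) forces $F(u) \ne \alpha$, making $v_1(F(u)-\alpha)\phi'(w)$ strictly positive inside the bracket. The main obstacle will be the bracketed third summand in direction $v_2 < 0$: without \eqref{eq:sufficient2 prop} the genuinely negative integral contribution could overwhelm the $\phi'$-term, and the argument hinges on the exact pairing of the monotonicity-based integral estimate with \eqref{eq:sufficient2 prop} evaluated at $(w, t_2)$.
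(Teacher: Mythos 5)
Your proposal is correct and follows essentially the same route as the paper: it computes $\frac{\diff}{\diff s}\bar S(t+sv,F)$, obtains the same three-term decomposition (up to regrouping the $\phi'$-term into the bracket), uses the same estimate $\int_{t_1}^{u}(F(\xi)-\alpha)\dint\xi \le s v_1 (F(u)-\alpha)$, and invokes \eqref{eq:sufficient2 prop} at $(w,t_2)$ to settle the case $v_2<0$, with the same strictness discussion via the unique $\alpha$-quantile. The only cosmetic difference is your derivation of the derivative through the auxiliary function $\mathrm{ES}_F(u)$, which matches the paper's formula \eqref{eq:ddsS} exactly.
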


\begin{proof}
Let $F \in \mathcal{F}$ with density $f$, $t = (t_1,t_2) = T(F)$, $v = (v_1,v_2) \in \mathbb{S}^2$, and $s > 0$ such that $t + sv \in \mathsf A$. Then, after some calculation, we find
\begin{multline}\label{eq:ddsS}
\frac{\diff}{\diff s}\bar S(t_1 + sv_1,t_2 + sv_2, F) = (F(t_1 + sv_1) - \a)v_1\big(g'(t_1 + sv_1)+\frac{1}{\alpha}\phi'(t_2 + sv_2)\big) \\+ sv_2^2\phi''(t_2 + sv_2) + v_2\phi''(t_2 + sv_2)\Big(\frac{1}{\a}\int_{t_1}^{t_1 + sv_1}F(y)\diff y - sv_1\Big)\,.
\end{multline}
We have
\begin{equation}\label{eq:thirdsum}
\a s v_1 \le \int_{t_1}^{t_1 + sv_1}F(y)\diff y \le F(t_1 + sv_1) s v_1.
\end{equation}
Note that by assumption $g'\ge 0$, $\phi' > 0$, $\phi'' > 0$, and, furthermore $(F(t_1 + sv_1) - \a)v_1 > 0$ for $v_1 \not=0 $. Therefore, if $v_2 \ge 0$, the first two summands on the right-hand side of \eqref{eq:ddsS} are strictly positive and the last one is non-negative using the first inequality in \eqref{eq:thirdsum}. For $v_2 < 0$, we find using the second inequality in \eqref{eq:thirdsum}
\begin{align*}
\frac{\diff}{\diff s}\bar S(t_1 + sv_1,t_2 + sv_2, F) &> (F(t_1 + sv_1) - \a)v_1\frac{1}{\alpha}\phi'(t_2 + sv_2) \\ &\quad+ v_2\frac{1}{\a}\phi''(t_2 + sv_2)\big(F(t_1 + sv_1)  - \a \big)s v_1\\
&= \frac{1}{\a}(F(t_1 + sv_1) - \a)v_1 \big(\phi'(t_2 + sv_2) + sv_2\phi''(t_2 + sv_2)\big) \ge 0,
\end{align*}
where the last inequality is due to the assumption at \eqref{eq:sufficient2 prop}.
\end{proof}

\begin{exmp}\label{exmp:FZ order-sensitive}
Consider the action domain $\A = \{x\in\R^2\colon x_1\ge x_2, \ x_2<0\}$, so $\mathsf{A}_2 = (-\infty,0)$. For all $\phi$ in the family $\{\phi_b\colon (-\infty, 0)\to\R \colon b\in(0,1]\}$ where $\phi_1(x) = -\log(|x|)$, $x<0$ and for $b \in (0,1)$
\[
\phi_b(x) = \frac{1}{b-1}|x|^{1-b},  \quad x<0,
\]
condition \eqref{eq:sufficient2 prop} is satisfied. 
\end{exmp}

A strict $\F$-identification function $V\colon\A\times \O\to\R$ for a functional $T\colon\F\to\A\subseteq \R$ is oriented for $T$ if 
\be{eq:orientation k=1 b}
\bar V(x,F)>0\quad \Longleftrightarrow \quad x>T(F)
\ee
for all $F\in\F$, $x\in\A$ \citep{Lambert2008,SteinwartPasinETAL2014}. One possible generalization of orientation for higher-dimensional functionals is the following. 
Let $T\colon \F\to \A\subseteq\R^k$ be a functional with a strict $\F$-identification function $V\colon \A\times \O\to\R^k$. Then $V$ is called an \textit{oriented} strict $\F$-identification function for $T$ if
\[
 v^\top \bar V(T(F) + sv,F) > 0  \quad \Longleftrightarrow \quad s>0
\]
for all $v\in\mathbb S^{k-1} := \{x\in\R^k \colon \|x\|=1\}$, for all $F\in \F$ and for all $s\in \R$ such that $T(F) + sv\in\A$.

Our notion of orientation differs from the one proposed by \cite{FrongilloKash2014b}. In contrast to their definition, our definition is \emph{per se} independent of a (possibly non-existing) strictly consistent scoring function for $T$. Moreover, whereas their definition has connections to the convexity of the expected score, our definition shows strong ties to order-sensitivity on line segments.

If the gradient of an expected score induces an oriented identification function, then the scoring function is strictly order-sensitive on line segments, and vice versa. However, the existence of an oriented identification function is not sufficient for the existence of a strictly order-sensitive scoring function on line segments. The reason is that -- due to integrability conditions -- the identification function is not necessarily the gradient of some (scoring) function. 
%
%

%

\section{Equivariant functionals and order-preserving scoring functions}\label{sec:equivariance}

Many statistical functionals have an invariance or equivariance property. For example, the mean is a linear functional, and hence, it is equivariant under linear transformations. So $\E[\ph(X)] = \ph(\E[X])$ for any random variable $X$ and any linear map $\ph\colon \R\to\R$ (of course, the same is true for the higher-dimensional setting). On the other hand, the variance is invariant under translations, that is $\Var(X-c) = \Var(X)$ for any $c\in\R$, but scales quadratically, so $\Var(\lambda X) = \lambda^2 \Var(X)$ for any $\lambda \in\R$. The next definition strives to formalize such notions.

\begin{defn}[$\pi$-equivariance]\label{defn:generalized equivariance}
Let $\F$ be a class of probability distributions on $\O$ and $\A$ be an action domain. Let $\Phi$ be a group of bijective transformations $\ph\colon\O\to\O$, $\Phi^*$ a group of bijective transformations $\ph^*\colon\A\to\A$, and $\pi \colon \Phi \to \Phi^*$ be a map. A functional $T\colon\F\to\A$ is \emph{$\pi$-equivariant} if
for all $\ph\in\Phi$ 
\[
T(\L(\ph(Y))) = (\pi\ph)(T(\L(Y)))
\]
for all random variables $Y$ such that $\L(Y)\in \F$. 
\end{defn}

\begin{exmp}\label{exmp:equivariance}
\begin{enumerate}[(i)]
\item
For $\A=\O=\R$, the mean functional is $\pi$-equivariant for $\Phi = \Phi^* = \{x\mapsto x+c, c\in\R\}$ the translation group and $\pi$ the identity map, or for $\Phi = \Phi^* = \{x\mapsto \lambda x, \lambda \in\R\setminus \{0\}\}$ the multiplicative group and again $\pi$ the identity map.
\item
For $\A=\O=\R^k$, the multivariate mean functional is $\pi$-equivariant for $\Phi = \Phi^* = \{x\mapsto x+c, c\in\R^k\}$ the translation group and $\pi$ the identity map.
\item
For $\A=\O=\R$, Value at Risk at level $\a$, Expected Shortfall at level $\a$ and the $\tau$-expectile are $\pi$-equivariant for $\Phi = \Phi^* = \{x\mapsto x+c, c\in\R\}$ the translation group and $\pi$ the identity map, or for $\Phi = \Phi^* = \{x\mapsto \lambda x, \lambda >0\}$ the multiplicative group and again $\pi$ the identity map.
\item
For $\A = [0,\infty)$ and $\O = \R$, the variance is $\pi$-equivariant for $\Phi = \{x\mapsto x+c, c\in\R\}$ the translation group and $\Phi^* = \{\mathrm{id}_\A\}$ the trivial group consisting only of the identity on $\A$, such that $\pi$ is the constant map.
\item
For $\A = [0,\infty)$ and $\O = \R$, the variance is $\pi$-equivariant for $\Phi = \Phi^* = \{x\mapsto \lambda x,\lambda \in\R\setminus \{0\}\}$ the multiplicative group, and $\pi((x\mapsto \lambda x)) = (x\mapsto \lambda^2 x)$.
\item
Let $\A= \R^k$, $\O = \R$ and $T$ be the functional whose $m$th component is the $m$th moment. Then $T$ is $\pi$-equivariant with $\Phi = \{y\mapsto \lambda y, \lambda \in\R\setminus \{0\}\}$, $\Phi^* = \{x\mapsto (\lambda^m x_m)_{m=1}^k, \lambda \in\R\setminus \{0\}\}$, and $\pi((y\mapsto \lambda y)) = (x\mapsto (\lambda^m x_m)_{m=1}^k)$.
\end{enumerate}
\end{exmp}

If a functional $T$ is elicitable, $\pi$-equivariance can also be expressed in terms of strictly consistent scoring functions; see also \citet[p.~750]{Gneiting2011}.

\begin{lem}\label{lem:equivariance1}
Let $S\colon\A\times \O\to\R$ be a strictly $\F$-consistent scoring function for a functional $T\colon\F\to\A$ and let $\pi\colon \Phi \to\Phi^*$. Then, $T$ is $\pi$-equivariant if and only if for all $\ph\in\Phi$
\[
\argmin_{x\in\A} \bar S((\pi\ph)(x),\L(\ph(Y))) =  \argmin_{x\in\A} \bar S(x,\L(Y))
\]
for all random variables $Y$ such that $\L(Y)\in \F$. 
\end{lem}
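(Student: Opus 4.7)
The proof plan is to reduce both sides of the claimed identity to explicit evaluations of $T$, using strict consistency, and then observe that the equivalence becomes a tautology.

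First, I would record the immediate consequence of strict consistency: for every $F \in \F$,
\[
\argmin_{x\in\A} \bar S(x,F) = \{T(F)\},
\]
so in particular the right-hand side of the claimed identity equals $T(\L(Y))$ for any random variable $Y$ with $\L(Y)\in\F$.

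Next, I would rewrite the left-hand side. Since $\pi\ph\in\Phi^*$ is a bijection of $\A$, the substitution $z = (\pi\ph)(x)$ (equivalently $x = (\pi\ph)^{-1}(z)$) gives
\[
\argmin_{x\in\A} \bar S((\pi\ph)(x),\L(\ph(Y))) = (\pi\ph)^{-1}\!\left(\argmin_{z\in\A} \bar S(z,\L(\ph(Y)))\right) = (\pi\ph)^{-1}\bigl(T(\L(\ph(Y)))\bigr),
\]
where the last equality again uses strict consistency applied to the distribution $\L(\ph(Y))\in\F$ (which lies in $\F$ whenever the statement of $\pi$-equivariance is tested against it; implicitly we work with those $Y$ such that both $\L(Y)$ and $\L(\ph(Y))$ are in $\F$, as in Definition \ref{defn:generalized equivariance}).

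Finally, I would combine the two displays: the stated equality of argmins is equivalent to
\[
(\pi\ph)^{-1}\bigl(T(\L(\ph(Y)))\bigr) = T(\L(Y)),
\]
which, applying the bijection $\pi\ph$ to both sides, is exactly the defining relation $T(\L(\ph(Y))) = (\pi\ph)(T(\L(Y)))$ of $\pi$-equivariance. Since each step is an equivalence, both directions of the ``if and only if'' follow simultaneously. There is no real obstacle here: the only subtle point is to make explicit that $\pi\ph$ is a bijection on $\A$, which justifies pulling it out of the argmin, and to note that strict consistency is what turns the argmin sets into singletons so that the equality of sets is the same as equality of functional values.
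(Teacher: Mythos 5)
Your proof is correct and is precisely the direct argument the paper alludes to (the paper states only that ``the proof of Lemma \ref{lem:equivariance1} is direct'' and omits it): strict consistency turns each argmin into the singleton $\{T(F)\}$, and the bijectivity of $\pi\ph$ lets you pull it out of the argmin, reducing the claimed set equality to the defining relation of $\pi$-equivariance. Your remark that one implicitly works with $Y$ such that $\L(\ph(Y))\in\F$ matches the implicit assumption already present in Definition \ref{defn:generalized equivariance}.
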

The proof of Lemma \ref{lem:equivariance1} is direct.
It implies that the scoring function 
\be{eq:S_ph}
S_{\pi, \ph}\colon\A\times \O\to\R, \quad (x,y)\mapsto S_{\pi, \ph}(x,y) = S((\pi\ph)(x),\ph(y))
\ee
is also strictly $\F$-consistent for $T$. 
Similarly to the motivation of order-sensitivity of scoring functions, for fixed $\pi\colon \Phi\to\Phi^*$, it is a natural requirement on a scoring function $S$ that for all $\ph\in\Phi$ the ranking of \emph{any two} forecasts is the same in terms of $S$ and in terms of $S_{\pi, \ph}$. 

\begin{defn}[$\pi$-order-preserving]
Let $\pi\colon\Phi\to\Phi^*$. A scoring function $S\colon\A\times \O\to\R$ is \emph{$\pi$-order-preserving} with respect to $\F$ if for all $\ph\in\Phi$ one has
\[
\sgn\big(\bar S(x,F) - \bar S(x',F)\big) = \sgn\big(\bar S_{\pi, \ph}(x,F) - \bar S_{\pi, \ph}(x',F)\big)
\]
for all $F\in\F$ and for all $x,x'\in\A$, where $S_{\pi, \ph}$ is defined at \eqref{eq:S_ph}. $S$ is \emph{linearly $\pi$-order-preserving} if for all $\ph\in\Phi$ and for all $x,x'\in\A$ there is a $\lambda>0$ such that
\be{eq:defn:equivariance}
\lambda \big( S(x,y) - S(x',y)\big) = S_{\pi, \ph}(x,y) - S_{\pi, \ph}(x',y)
\ee
for all $y\in\O$. If $S$ is linearly $\pi$-order-preserving with a $\lambda>0$ independent of $x,x'\in\A$, then we call $S$ \emph{uniformly} linearly $\pi$-order-preserving.
\end{defn}
The following lemma is immediate.

\begin{lem}\label{lem:equivariance}
Let $\pi\colon\Phi\to\Phi^*$. If a scoring function $S\colon\A\times \O\to\R$ is linearly $\pi$-order-preserving, it is $\pi$-order-preserving with respect to any class $\F$ of probability distributions on $\O$.
\end{lem}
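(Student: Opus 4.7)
The claim is essentially an immediate consequence of the definitions, so the plan is to give a one-paragraph proof that integrates the pointwise linear relation and exploits the positivity of the scaling constant.

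Specifically, I would fix $\ph\in\Phi$, $F\in\F$, and $x,x'\in\A$. By the hypothesis of linear $\pi$-order-preservation, there is some $\lambda=\lambda(\ph,x,x')>0$ such that
\[
\lambda\bigl(S(x,y)-S(x',y)\bigr)=S_{\pi,\ph}(x,y)-S_{\pi,\ph}(x',y)\qquad\text{for all }y\in\O.
\]
Taking expectations with respect to $F$ on both sides, which is justified since $S$ and $S_{\pi,\ph}$ are by assumption $\F$-integrable, yields
\[
\lambda\bigl(\bar S(x,F)-\bar S(x',F)\bigr)=\bar S_{\pi,\ph}(x,F)-\bar S_{\pi,\ph}(x',F).
\]
Because $\lambda>0$, the two sides have the same sign, so
\[
\sgn\bigl(\bar S(x,F)-\bar S(x',F)\bigr)=\sgn\bigl(\bar S_{\pi,\ph}(x,F)-\bar S_{\pi,\ph}(x',F)\bigr),
\]
which is exactly the definition of $S$ being $\pi$-order-preserving with respect to $\F$. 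Since $F\in\F$, $\ph\in\Phi$, and $x,x'\in\A$ were arbitrary, we are done.

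There is no substantive obstacle: the only subtlety worth flagging is that the equality of the two expected-score differences requires $S_{\pi,\ph}(\cdot,y)$ to be $\F$-integrable, but this is built into the convention that all objects in sight are scoring functions in the sense of Definition \ref{def:consistency}. No measurability or regularity assumption on $\ph$ or $\pi\ph$ beyond what is implicitly used to define $S_{\pi,\ph}$ is needed.
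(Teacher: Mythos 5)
Your proof is correct and is exactly the argument the paper has in mind: the paper simply declares the lemma ``immediate,'' and the intended reasoning is precisely yours -- integrate the pointwise identity $\lambda\bigl(S(x,y)-S(x',y)\bigr)=S_{\pi,\ph}(x,y)-S_{\pi,\ph}(x',y)$ with respect to $F$ and use $\lambda>0$ to preserve the sign. Your remark on the integrability of $S_{\pi,\ph}$ is an appropriate (and the only) caveat, consistent with the paper's implicit convention that $S_{\pi,\ph}$ is itself a scoring function.
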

The two practically most relevant examples of uniform linear $\pi$-order preservingness are translation invariance and positive homogeneity of scoring functions, or, to be more precise, of score differences. They are described in the two subsequent subsections.

\subsection{Translation invariance}
Consider a translation equivariant functional such as the mean treated in Example \ref{exmp:equivariance} (ii). Then, a scoring function $S\colon \R^k\times \R^k\to\R$ is said to have translation invariant score differences if it is uniformly linearly $\pi$-equivariant with $\lambda = 1$ for all $\ph \in\Phi$. In formulae, we require $S$ to satisfy 
\begin{equation}\label{eq:tmp}
S(x-z,y-z) - S(x'-z,y-z) = S(x,y) - S(x',y)
\end{equation}
for all $x,x', y,z \in\R^k$. Note that what is particularly appealing is that the action domain and the observation domain coincide and, in particular, have the same dimension. However, there are also other functionals such as vectors of different quantiles or expectiles, or the vector $(\VaR_\a, \ES_\a)$ satisfying properties one can naturally call translation equivariant, but that have the drawback that $\A\neq \O$ (typically, $\O$ is of lower dimension than $\A$). Then, translation invariance means that the score is invariant under a simultaneous translation of the observation and the forecast along respective linear subspaces of $\A$ and $\O$. 

Let $\A \subseteq \R^k$, $\O= \R^d$ and $m\in \{1, \ldots, \min\{k,d\}\}$. Let $M_\O\in\R^{d\times m}$ and $M_\A\in\R^{k\times m}$ be two matrices with rank $m$.
Define the transformation groups 
\begin{align*}
\Phi &:= \Phi_{M_\O} :=\{y \mapsto y-M_\O z, \ z\in\R^m\},\\
\Phi^* &:= \Phi^*_{M_\A} :=\{x \mapsto x-M_\A z, \ z\in\R^m\},
\end{align*}
where we impose that $x-M_\A z \in \A$ for all $x \in \A$, $z \in \R^m$.
Then, the map $\pi = \pi_{M_\O, M_\A}\colon \Phi_{M_\O} \to \Phi^*_{M_\A}$ naturally induced by $M_\O$ and $M_\A$ is given as
\[
 \pi_{M_\O, M_\A}((y\mapsto y - M_\O z)) = (x \mapsto x- M_\A z).
\]
We say that a functional $T\colon\F\to\R^k$ is linearly equivariant if there are such matrices $M_\O, M_\A$ such that $T$ is $ \pi_{M_\O, M_\A}$-equivariant.

\begin{exmp}\label{exmp: equivariance 2}
\begin{enumerate}[(i)]
\item
Let $\O=\R$, $\A=\{(x_1,x_2)\in\R^2\colon  x_2\le x_1 \}$ and $T = (\VaR_\a, \ES_\a)$ with some generic $\F$. Then $T$ is $\pi_{M_\O, M_\A}$-equivariant with $M_\O = \mathrm{id}_\R$ and $M_{\A} = (1,1)^{\top}$.
\item
Let $\O=\R$, $\A = \R\times [0,\infty)$ and $T = (\textup{mean, variance})\colon \F\to\A$ where all $F\in\F$ have finite second moments. Then $T$ is $\pi_{M_\O, M_\A}$-equivariant with $M_\O = \mathrm{id}_\R$ and $M_{\A} = (1,0)^{\top}$.
\item
Let $\O = \R$, $\A=\R^k$ and $T$ be a vector of $k$ different quantiles. Let $M \in\R^{k\times k}$ have rank at least 1 and consider the functional $T_M = M(T)$. Then $T_M$ is $\pi_{M_\O, M_\A}$-equivariant with $M_\O = \mathrm{id}_\R$ and $M_{\A} = M(1,\dots,1)^{\top}$.
\end{enumerate}
\end{exmp}

Adopting this notion, we say that a scoring function $S\colon \A\times \R^d\to\R$ is linearly $(M_\O, M_\A)$-invariant for two matrices $M_\O\in\R^{d\times m}$, $M_\A\in\R^{k\times m}$ with $\rank(M_\O) = \rank(M_\A) = m\in \{1, \ldots, \min\{k,d\}\}$ if 
\[
S(x - M_\A z, y - M_\O z) = S(x,y)
\]
for all $x\in\A$, $y\in\R^d$, $z\in\R^m$. Similarly, we will speak about linearly $(M_\O, M_\A)$-invariant identification functions and score differences.

%

Given a certain functional $T\colon\F\to\R^k$ and some $M_\O\in\R^{d\times m}$, $M_\A\in\R^{k\times m}$ with $\rank(M_\O) = \rank(M_\A) = m\in \{1, \ldots, \min\{k,d\}\}$, one can wonder about the class of strictly consistent scoring functions that are linearly $(M_\O, M_\A)$-invariant.
Clearly, with respect to Lemma \ref{lem:equivariance1} and Lemma \ref{lem:equivariance}, this class is empty if the functional $T$ is not $\pi_{M_\O, M_\A}$-equivariant. In the situation that $\A = \O=\R^k$ and $M_\O = M_\A = \mathrm{id}_{\R^k}$ the following proposition characterizes the gradients of linearly $(\mathrm{id}_{\R^k}, \mathrm{id}_{\R^k})$-invariant strictly consistent scoring function (if such scoring functions exist).

\begin{prop}\label{prop:translation invariance}
Let $T\colon\F\to\R^k$ be a surjective, identifiable functional with a linearly $(\mathrm{id}_{\R^k}, \mathrm{id}_{\R^k})$-invariant strict $\F$-identification function $V\colon\R^k\times\R^k\to\R^k$. 
Then, the following assertions hold.
\begin{enumerate}[\rm (i)]
\item
$T$ is $\pi_{\mathrm{id}_{\R^k}, \mathrm{id}_{\R^k}}$-equivariant.
\item
Assume there is a strictly $\F$-consistent scoring function $S\colon \R^k\times\R^k\to\R$ for $T$ 
with linearly $(\mathrm{id}_{\R^k}, \mathrm{id}_{\R^k})$-invariant score differences. Then, under Assumptions (V1) and (S1) in \cite{FisslerZiegel2016}, there is a constant matrix $h\in\R^{k\times k}$ such that 
\be{eq:h constant}
\nabla \bar S(x,F) = h\,\bar V(x,F)
\ee
for all $x\in\R^k$ and for all $F\in\F$.
\end{enumerate}
\end{prop}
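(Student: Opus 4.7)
\emph{Part (i).} The plan is to unwind the definitions directly. Write $F_{-z}$ for the law of $Y-z$ when $Y\sim F$; the class $\F$ is implicitly closed under translations (otherwise the asserted equivariance is vacuous). The linear invariance $V(x-z,y-z)=V(x,y)$ yields, upon integrating over $Y\sim F$,
\[
\bar V(x-z,F_{-z})=\bar V(x,F) \qquad \text{for all } x,z\in\R^k,\ F\in\F.
\]
Setting $x=T(F)$, the right-hand side vanishes since $V$ is an identification function, so $T(F)-z$ is a zero of $\bar V(\cdot,F_{-z})$. Strictness of $V$ then forces $T(F_{-z})=T(F)-z$, which is precisely the $\pi_{\mathrm{id}_{\R^k},\mathrm{id}_{\R^k}}$-equivariance of $T$.

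\emph{Part (ii).} First I would invoke Osband's principle (Theorem 3.2 in \cite{FisslerZiegel2016}) under (V1) and (S1) to obtain a matrix-valued function $h\colon\R^k\to\R^{k\times k}$ with
\[
\nabla\bar S(x,F)=h(x)\,\bar V(x,F) \qquad \text{for all } x\in\R^k,\ F\in\F.
\]
Next I would exploit the translation invariance of score differences. Differentiating \eqref{eq:tmp} with respect to $x$ (the $x'$-terms drop out), integrating over $Y\sim F$, and interchanging $\nabla$ with $\E_F$ via (S1) yields $\nabla\bar S(x-z,F_{-z})=\nabla\bar S(x,F)$. Combining this with the analogous identity $\bar V(x-z,F_{-z})=\bar V(x,F)$ from (i) and applying Osband's formula at the point $(x-z,F_{-z})$ gives
\[
h(x)\,\bar V(x,F)=h(x-z)\,\bar V(x,F) \qquad \text{for all } x,z\in\R^k,\ F\in\F.
\]

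The final and most delicate step is to conclude $h(x)=h(x-z)$ as matrices. What is needed here, and what I expect to be the main obstacle, is a spanning condition on $V$: for each fixed $x$, the set $\{\bar V(x,F):F\in\F\}$ must span $\R^k$, so that $\big(h(x)-h(x-z)\big)$ annihilates a spanning set and is therefore zero. This spanning property is the usual richness hypothesis underlying Osband's principle in \cite{FisslerZiegel2016} (it is in fact what makes $h(x)$ unique); given the surjectivity and identifiability of $T$, it can be justified by taking convex combinations $(1-\lambda)F_0+\lambda F$ with $T(F_0)=x$ and varying $F$ so that $\bar V(x,\cdot)$ covers all directions of $\R^k$. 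Once this is in place, arbitrariness of $x,z\in\R^k$ yields $h\equiv h(0)$, proving the claim with the constant matrix $h:=h(0)$.
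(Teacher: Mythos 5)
Your proposal is correct and follows essentially the same route as the paper: part (i) by integrating the pointwise invariance of $V$ and using strictness, and part (ii) by Osband's principle plus differentiating the translation-invariant score differences to get $h(x)\,\bar V(x,F)=h(x-z)\,\bar V(x,F)$. The spanning condition you flag as the delicate step is exactly what the paper invokes through Assumption (V1) together with \citet[Remark 3.1]{FisslerZiegel2016}, which guarantees that $\{\bar V(x,F)\colon F\in\F\}$ spans $\R^k$ and hence forces $h(x-z)-h(x)=0$.
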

\begin{proof}
If a random variable $Y$ has distribution $F$ with $F\in\F$, we write $F-z$ for the distribution of $Y-z$ where $z\in\R^k$. To show the first part, 
consider any $F\in\F$ and $z\in\R^k$. Then
\[
0= \E_F[V(T(F),Y)] = \E_F[V(T(F) - z,Y-z)].
\]
Since $V$ is a strict $\F$-identification function for $T$, $T(F-z) = T(F) -z$.

For the second part, \citet[Theorem 3.2]{FisslerZiegel2016} implies that there exists a matrix-valued function $h\colon \R^k\to\R^{k\times k}$ such that 
\[
\nabla \bar S(x,F) = h(x)\bar V(x,F)
\]
for all $x\in\R^k$ and for all $F\in\F$. We will show that $h$ is constant. Since $\bar S(x,F) - \bar S(x',F) = \bar S(x-z, F-z) - \bar S(x'-z, F-z)$ for all $x, x',z\in\R^k$ and $F\in\F$, we obtain by taking the gradient with respect to $x$
\be{eq:kernel}
h(x)\bar V(x,F) = h(x-z) \bar V(x-z, F-z) =  h(x-z)\bar V(x,F),
\ee
where the second identity is due to the linear $(\mathrm{id}_{\R^k}, \mathrm{id}_{\R^k})$-invariance of $V$. So \eqref{eq:kernel} is equivalent to 
\[
\bar V(x,F) \in \ker\big(h(x-z) -h(x)\big).
\]
Now, one can use Assumption (V1) and \citet[Remark 3.1]{FisslerZiegel2016}, which implies that
\[
\ker\big(h(x-z) -h(x)\big) = \R^k.
\]
Since $x,z \in \R^k$ were arbitrary, the function $h$ is constant.
\end{proof}

Using \citet[Proposition 4.4]{FisslerZiegel2016} one can establish the converse of Proposition \ref{prop:translation invariance}: If $V$ is a linearly $(\mathrm{id}_{\R^k}, \mathrm{id}_{\R^k})$-invariant strict $\F$-identification function, then \eqref{eq:h constant} implies that $S$ has linearly $(\mathrm{id}_{\R^k}, \mathrm{id}_{\R^k})$-invariant score differences.
The following lemma shows how to normalize scores with translation invariant score differences to obtain a translation invariant score.

\begin{lem}\label{lem:normalisation}
Let $S\colon \R^k\times \R^k\to\R$ a strictly $\F$-consistent scoring function for $T\colon \F\to\R^k$ with linearly $(\mathrm{id}_{\R^k}, \mathrm{id}_{\R^k})$-invariant score differences. If for all $y\in\R^k$, the point measures $\delta_y$ are in $\F$ and the function $y\mapsto S(T(\delta_y),y)$ is $\F$-integrable, then 
\be{eq:S_0}
S_0(x,y) := S(x,y) - S(T(\delta_y),y)
\ee
is a linearly $(\mathrm{id}_{\R^k}, \mathrm{id}_{\R^k})$-invariant, non-negative, strictly $\F$-consistent scoring function for $T$.
\end{lem}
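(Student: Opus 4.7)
The plan is to verify the three claimed properties of $S_0$ in order: translation invariance, non-negativity, and strict $\F$-consistency. The main (and only nontrivial) step will be the translation invariance, which requires combining the invariance of score differences with the translation equivariance of $T$ furnished by Proposition \ref{prop:translation invariance}(i).

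First, for translation invariance, fix $x,y,z \in \R^k$. By the linear $(\mathrm{id}_{\R^k}, \mathrm{id}_{\R^k})$-invariance of the score differences of $S$, applied with $x' = T(\delta_y)$, one has
\[
S(x,y) - S(T(\delta_y),y) = S(x-z,y-z) - S(T(\delta_y) - z, y-z).
\]
Since $V$ is linearly invariant and strict, Proposition \ref{prop:translation invariance}(i) gives $T(\delta_{y-z}) = T(\delta_y) - z$ (applied to the point measure $\delta_y$, which is assumed to lie in $\F$). Substituting into the right-hand side yields $S_0(x,y) = S_0(x-z, y-z)$, so $S_0$ is linearly $(\mathrm{id}_{\R^k}, \mathrm{id}_{\R^k})$-invariant.

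Next, for non-negativity, observe that for each $y \in \R^k$, strict $\F$-consistency of $S$ applied to the point measure $\delta_y \in \F$ gives $S(T(\delta_y), y) = \bar S(T(\delta_y),\delta_y) \le \bar S(x, \delta_y) = S(x,y)$ for every $x \in \R^k$. Hence $S_0(x,y) \ge 0$.

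Finally, for strict $\F$-consistency, note that $S_0(x,y) = S(x,y) - a(y)$ where $a(y) := S(T(\delta_y),y)$ is $\F$-integrable by hypothesis and does not depend on $x$. For any $F \in \F$, $\bar S_0(x,F) = \bar S(x,F) - \E_F[a(Y)]$, so $\bar S_0(\cdot,F)$ and $\bar S(\cdot,F)$ differ by a constant and therefore have the same (unique) minimizer $T(F)$. This preserves strict $\F$-consistency. The only subtlety anywhere in the argument is ensuring that $T(\delta_y)$ transforms correctly under translation, which is exactly the content of Proposition \ref{prop:translation invariance}(i); everything else is a direct consequence of the definitions.
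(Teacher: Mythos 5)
Your non-negativity and strict-consistency steps are exactly the paper's argument and are fine. The gap is in the translation-invariance step: you justify $T(\delta_{y-z}) = T(\delta_y) - z$ by appealing to Proposition \ref{prop:translation invariance}(i), which presupposes a linearly $(\mathrm{id}_{\R^k}, \mathrm{id}_{\R^k})$-invariant \emph{strict identification function} $V$. No such $V$ appears among the hypotheses of Lemma \ref{lem:normalisation}; the lemma only assumes a strictly $\F$-consistent $S$ with invariant score differences and that all point measures lie in $\F$. As written, you are importing an extra assumption, so the citation does not carry the step.

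The needed equivariance does follow from the lemma's own hypotheses, and this is the route the paper takes: since $S$ satisfies \eqref{eq:tmp}, Lemma \ref{lem:equivariance1} (applied to the translation group) yields that $T$ is $\pi_{\mathrm{id}_{\R^k}, \mathrm{id}_{\R^k}}$-equivariant, whence $T(\delta_{y-z}) = T(\delta_y) - z$ because $\delta_y, \delta_{y-z}\in\F$. Alternatively, argue directly: by \eqref{eq:tmp}, $x\mapsto S(x,y-z)=\bar S(x,\delta_{y-z})$ and $x\mapsto S(x+z,y)=\bar S(x+z,\delta_y)$ differ by a constant not depending on $x$, so their unique minimizers (guaranteed by strict consistency) coincide, giving $T(\delta_{y-z})=T(\delta_y)-z$. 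With that substitution your computation $S_0(x,y)=S_0(x-z,y-z)$ goes through and the proof matches the paper's.
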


\begin{proof}
If $S$ has linearly $(\mathrm{id}_{\R^k}, \mathrm{id}_{\R^k})$-invariant score differences, $S$ satisfies \eqref{eq:tmp} for all $x,x',y,z\in\R^k$. Due to Lemma \ref{lem:equivariance1}, $T$ must be $\pi_{\mathrm{id}_{\R^k}, \mathrm{id}_{\R^k}}$-equivariant, hence, $T(\delta_y) - z = T(\delta_{y-z})$. This yields that $S_0$ defined at \eqref{eq:S_0} is linearly $(\mathrm{id}_{\R^k}, \mathrm{id}_{\R^k})$-invariant. 
Since $S$ and $S_0$ are of equivalent form, also $S_0$ is strictly $\F$-consistent for $T$. The non-negativity follows directly from the fact that $\F$ contains all point measures and from the strict consistency.
\end{proof}

In case of the mean functional on $\R$, Proposition \ref{prop:translation invariance} has already been shown by \cite{Savage1971} who showed that the squared loss is the only strictly consistent scoring function for the mean that is of prediction error form, up to equivalence.\footnote{That means that the scoring function is a function in $x-y$ only.} Furthermore it implies that general $\tau$-expectiles and $\a$-quantiles have essentially one linearly $(\mathrm{id}_{\R}, \mathrm{id}_{\R})$-invariant strictly consistent scoring function only, namely the canonical choices $S_\tau(x,y) = |\one\{y\le x\} - \tau|(x-y)^2$ and $S_\a(x,y) = (\one\{y\le x\} - \a)(x-y)$. 

The uniqueness -- up to equivalence -- disappears for $k>1$. For example, for the the 2-dimensional mean functional, the previous results yield that any scoring function $S\colon\R^2\times\R^2\to\R$ of the form
\[
S(x,y) = \frac{h_{11}}{2}(x_1 - y_1)^2 + \frac{h_{22}}{2}(x_2 - y_2)^2 + h_{12}y_2(y_1 - x_1)+ h_{12}x_2(x_1-y_1)
\] 
is strictly consistent for the 2-dimensional mean functional and linearly $(\mathrm{id}_{\R^2}, \mathrm{id}_{\R^2})$-invariant, for any $h_{11}>0$ and $h_{11}h_{22} - h_{12}^2>0$.

Due to the additive separability of strictly consistent scoring functions for vectors consisting of different quantiles and expectiles \cite[Proposition 4.2]{FisslerZiegel2016}, strictly consistent scoring functions that are linearly $(\mathrm{id}_{\R}, \mathrm{id}_{\R^k})$-invariant for these vectors are not unique. However, the only flexibility in that class consists in choosing different weights for the respective summands of the scores.

The pair $(\textup{mean, variance})$ is a  $\pi_{M_\O, M_\A}$-equivariant functional with $M_\O$ and $M_\A$ as in Example \ref{exmp: equivariance 2}(ii). Curiously, it has a linearly $(M_\O, M_\A)$-invariant identification function $V(x_1,x_2,y) = \big(x_1 - y, x_2 - (x_1-y)^2\big)^\top$ but does not possess a strictly consistent linearly $(M_\O, M_\A)$-invariant scoring function.

\begin{prop}
Let $\F$ be a class of distributions on $\R$ with finite second moments such that the functional $T = (\textup{mean, variance})\colon \F\to\A$ is surjective on $\A = \R\times I$,where $I\subseteq [0,\infty)$ is an interval.
Let Assumptions (F1) and (V1) from \cite{FisslerZiegel2016} be satisfied with the strict $\F$-identification functions $V\colon\A\times \R\to\R^2$, $V(x_1,x_2,y) = \big(x_1 - y, x_2 -(x_1-y)^2 \big)^\top$ and $V^*\colon\A\times \R\to\R^2$, $V^*(x_1,x_2,y) = \big(x_1 - y, x_2 + x_1^2 - y^2 \big)^\top$. 
Let $S\colon\A\times \R\to\R$ be a $\F$-consistent scoring function for $T$ that is (jointly) continuous, and for any $y\in\R$, the function $\A\ni x\mapsto S(x,y)$ is twice continuously differentiable. 
If $S$ has linearly $(M_\O, M_\A)$-invariant score differences, then there is a $\lambda\ge0$ and an $\F$-integrable functional $a\colon\R\to\R$ such that  
\[
S(x_1,x_2,y)= \lambda(x_1-y)^2 +a(y).
\]
In particular, $S$ cannot be strictly $\F$-consistent for $T$.
\end{prop}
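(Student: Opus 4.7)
The plan is to pin down the form of $S$ by combining Osband's principle with the translation invariance of score differences. First, I apply Osband's principle (Theorem 3.2 in \cite{FisslerZiegel2016}) to $S$ and the identification function $V$: this yields a matrix-valued function $h\colon\interior(\A)\to\R^{2\times 2}$ with $\nabla \bar S(x,F)=h(x)\bar V(x,F)$ for all $F\in\F$ and $x\in\interior(\A)$. A direct computation using $V(x_1-z,x_2,y-z)=V(x_1,x_2,y)$ shows that $\bar V$ is invariant under the simultaneous translation $(x_1,x_2,F)\mapsto(x_1-z,x_2,F_{-z})$, where $F_{-z}$ denotes the law of $Y-z$ for $Y\sim F$. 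Differentiating the score-difference identity in $x$ and invoking the span condition (V1) exactly as in the proof of Proposition \ref{prop:translation invariance}(ii), I deduce $h(x_1-z,x_2)=h(x_1,x_2)$ for all $z\in\R$, so that $h$ depends only on $x_2$.

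The second step uses the Schwarz identity $\partial_1\partial_2 \bar S=\partial_2\partial_1\bar S$, which is available thanks to the $C^2$ assumption on $S$. Writing out both sides with the explicit form of $\bar V(x,F)$ and invoking (V1) to equate coefficients of $(x_1-\E[Y])$, $(x_2-\E[(x_1-Y)^2])$, and the constant term, one obtains that $h_{12}(x_2)=h_{21}(x_2)=:c$ is a constant and $h_{11}'(x_2)=-2\,h_{22}(x_2)$. Integrating $\partial_1\bar S$ produces a cubic contribution $-\tfrac{c}{3}(x_1-y)^3$ in $S$, whose expectation $-\tfrac{c}{3}\E[(x_1-Y)^3]$ is a genuine cubic in $x_1$ for any fixed $x_2$ and $F$; hence $\bar S(\cdot,F)$ is unbounded below in $x_1$ unless $c=0$. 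Consistency thus forces $c=0$, and a further integration gives $S(x_1,x_2,y)=\tfrac{H(x_2)}{2}(x_1-y)^2+\phi(x_2)+a(y)$ with $H:=h_{11}\ge 0$ and $\phi'(x_2)=-\tfrac{H'(x_2)}{2}x_2$.

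The main obstacle is the final step, namely ruling out genuine $x_2$-dependence of $H$. To this end I substitute the integrated form into the consistency inequality $\bar S(x_1,x_2,F)-\bar S(\mu,\sigma^2,F)\ge 0$, which reduces to $\tfrac{H(x_2)}{2}(x_1-\mu)^2+\tfrac{H(x_2)-H(\sigma^2)}{2}\sigma^2+\phi(x_2)-\phi(\sigma^2)\ge 0$ for all $\mu,x_1\in\R$ and all $\sigma^2,x_2\in I$, the freedom in $\sigma^2$ being provided by the surjectivity of $T$. Exchanging the roles of $x_2$ and $\sigma^2$ in this inequality, combined with the identity $\phi'=-\tfrac{H'}{2}x_2$ and the sign condition $H'\le 0$ that it already entails, pins down $H'\equiv 0$ on $\interior(I)$; hence $H\equiv 2\lambda$ is constant and $\phi$ collapses to a constant that can be absorbed into $a$. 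The representation $S(x_1,x_2,y)=\lambda(x_1-y)^2+a(y)$ with $\lambda\ge 0$ then follows, and since this expression does not depend on $x_2$, the expected score $\bar S(\cdot,F)$ is simultaneously minimised at $(\mu,x_2)$ for every $x_2\in I$, which precludes strict $\F$-consistency.
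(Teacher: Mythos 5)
Your steps 1--4 are essentially sound, and they take a different route from the paper: instead of passing through the revelation principle and the Bregman characterization of consistent scores for the pair (mean, second moment), you combine Osband's principle for $V$ with the Schwarz identity for $\bar S$, which correctly gives $h_{12}=h_{21}=0$ (phrase the cubic-unboundedness argument in terms of $\bar V_2$, which is a quadratic in $x_1$ with finite coefficients, rather than $\E[(x_1-Y)^3]$, since third moments need not exist for $F\in\F$) and hence the reduced form $S(x_1,x_2,y)=\tfrac{H(x_2)}{2}(x_1-y)^2+\phi(x_2)+a(y)$ with $\phi'(x_2)=-\tfrac{H'(x_2)}{2}x_2$ and $H\ge 0$. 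The last step, however, is a genuine gap. Exchanging $x_2$ and $\sigma^2$ in your reduced consistency inequality (at $x_1=\mu$) only yields $\big(H(x_2)-H(\sigma^2)\big)(\sigma^2-x_2)\ge 0$, i.e.\ that $H$ is non-increasing; it does not yield $H'\equiv 0$. Concretely, take $H(x_2)=e^{-x_2}$ and $\phi(x_2)=\tfrac12\big(1-(1+x_2)e^{-x_2}\big)$, so that $\phi'=-\tfrac{H'}{2}x_2$. Then for all $x_2,\sigma^2\in I$,
\[
\frac{H(x_2)-H(\sigma^2)}{2}\,\sigma^2+\phi(x_2)-\phi(\sigma^2)
=\int_{\sigma^2}^{x_2}\frac{e^{-u}}{2}\,(u-\sigma^2)\dint u\;\ge\;0,
\]
so the corresponding score satisfies every relation you derived: it is smooth, its score differences are linearly $(M_\O,M_\A)$-invariant (it depends on $(x_1,y)$ only through $x_1-y$), and it is $\F$-consistent for (mean, variance) -- indeed strictly consistent, since $H>0$ and the displayed quantity is strictly positive for $x_2\neq\sigma^2$ -- yet $H$ is not constant. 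So the final step cannot be repaired by further manipulation of these inequalities; they genuinely admit non-constant $H$.

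Note also that this is not an artifact of your particular route: the example above appears to meet all hypotheses of the Proposition while not being of the form $\lambda(x_1-y)^2+a(y)$, so no completion of the argument should be expected without revisiting the statement. The corresponding delicate point in the paper's own proof is the computation of $\partial_2 S$ from the Bregman representation at \eqref{eq:alternative}, where the cross term $\phi_{12}(x_1,x_2+x_1^2)(x_1-y)$ is dropped; keeping it, the constancy of $h$ in $x_1$ only forces $\partial_{x_1}\big[\phi_{12}(x_1,x_2+x_1^2)\big]=-2\phi_{22}(x_1,x_2+x_1^2)$ rather than $\phi_{22}=0$, and the Bregman function with Hessian entries $\phi_{22}=e^{-v}/2$, $\phi_{12}=-m_1e^{-v}$, $\phi_{11}=(1+2m_1^2)e^{-v}$, $v=m_2-m_1^2$ (which is the $(m_1,m_2)$-coordinate version of the example above) satisfies exactly this relation.
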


\begin{proof}
\citet[Theorem 3.2]{FisslerZiegel2016} asserts that there is a matrix-valued function $h\colon\interior(\A)\to\R^{2\times2}$ such that for all $(x_1,x_2)\in\interior(\A)$ and for all $F\in\F$ we have 
\be{eq:alternative1}
\nabla \bar S(x,F) = h(x_1,x_2)\bar V(x_1,x_2,F).
\ee
Due to the special form of $V$ and Assumption (F1), this equation holds also pointwise for all $y\in\R$. Moreover, the function $h$ is continuously differentiable. Assume that $S$ has linearly $(M_\O, M_\A)$-invariant score differences. This implies, combined with the previous result, that for all $(x_1,x_2)\in\interior(\A)$, and for all $y,z\in\R$
\begin{align*}
h(x_1,x_2)V(x_1,x_2,y)
&=
\nabla_x S(x_1,x_2,y) \\
&= \nabla_x S(x_1+z,x_2,y+z) \\
&= h(x_1+z,x_2)V(x_1+z,x_2,y+z) \\
&= h(x_1+z,x_2)V(x_1,x_2,y).
\end{align*}
An application of Assumption (V1) (similarly to the proof of Proposition \ref{prop:translation invariance}) yields that $h$ must be necessarily constant in its first argument.

On the other hand, arguing as in the proof of Proposition \ref{prop:mean variance os} with identification function $V^*$, the revelation principle yields that 
\be{eq:alternative}
S(x_1,x_2,y) = -\phi(x_1,x_2+x_1^2) + \nabla \phi(x_1,x_2+x_1^2)
\begin{pmatrix}
x_1 - y \\
x_2 + x_1^2 - y^2
\end{pmatrix} +a(y),
\ee
where $a\colon\R\to\R$ is some $\F$-integrable function and, due to our assumptions and \citet[Proposition 4.4]{FisslerZiegel2016}, $\phi\colon\A'\to\R$ is $C^3$ and convex with gradient $\nabla \phi$ and Hessian $(\phi_{ij})_{i,j=1,2}$. Using the representation at \eqref{eq:alternative}, one obtains
$\partial_2 S(x_1,x_2,y) = \phi_{22}(x_1,x_2+x_1^2)(x_2+x_1^2-y^2)$. A comparison to the form at \eqref{eq:alternative1} yields that
\begin{align*}
h_{22}(x_1,x_2) &= \phi_{22}(x_1,x_2+x_1^2) \\
h_{21}(x_1,x_2) &= 2x_1\phi_{22}(x_1,x_2+x_1^2).
\end{align*}
Since $\partial_1h_{22}(x_1,x_2)$ vanishes, we obtain that $0 = \partial_1 h_{21}(x_1,x_2) = 2\phi_{22}(x_1,x_2+x_1^2)$. As the Hessian of $\phi$ must be positive semi-definite $\phi_{11}\ge0$ and $\phi_{12} = \phi_{21}=0$. Since $\phi$ is $C^3$, we have that $\partial_2\phi_{11} = \partial_1\phi_{12} = 0$, hence $\phi_{11}$ is constant in the first argument. Equating the first component of \eqref{eq:alternative1} and \eqref{eq:alternative} and using that $h_{12} = h_{21} = 0$, we find that $\phi_{11}(x_1,x_2 + x_1^2) = h_{11}(x_1,x_2)$. 
As $h_{11}$ is constant in $x_1$, this implies that $\phi_{11}$ is also constant in its second argument which yields the claim.
\end{proof}

The functional $(\VaR_\a, \ES_\a)$, $\a\in(0,1)$, is also a relevant $\pi_{M_\O, M_\A}$-equivariant functional with $M_\O$ and $M_\A$ as in Example \ref{exmp: equivariance 2}(i). However, scoring functions with linearly $(M_\O,M_\A)$-invariant score differences only exist for restricted classes of distribution functions $\F$ which may not be natural choices in risk management applications.


\begin{prop} \label{prop:VaR,ES translation invariance}
Let $\a\in(0,1)$. Let $\F$ be a class of distribution functions on $\R$ with finite first moments and unique $\a$-quantiles. Consider $T = (\VaR_\a, \ES_\a)\colon \F\to\{(x_1,x_2)\in\R^2\colon  x_2\le x_1 \}$.
Then, the following assertions hold:
\begin{enumerate}[\rm (i)]
\item
Suppose there is some $c>0$ such that 
\be{eq:condition translation inv}
\ES_\a(F) + c > \VaR_\a(F) \qquad \text{for all $F\in\F$.}
\ee
That is, $T(\F) \subseteq \A_c := \{(x_1,x_2)\in\R^2\colon  x_2 \le x_1< x_2 +c\}$.
Then, any scoring function $S\colon\A_c\times \R\to\R$, which is equivalent to
\begin{multline}\label{eq:S_c}
S_c(x_1,x_2,y) = (\one\{y\le x_1\} - \a)c(x_1-y)  + \a (x_2^2/2 + x_1^2/2 - x_1x_2) \\ 
+ \one\{y\le x_1\}(-x_2(y-x_1) + y^2/2 - x_1^2/2),
\end{multline}
is strictly $\F$-consistent for $T$ and has linearly $(M_\O,M_\A)$-invariant score differences with $M_\O = \mathrm{id}_\R$, $M_\A = (1,1)^{\top}$.
\item
Under the conditions of \citet[Theorem 5.2(iii)]{FisslerZiegel2016}, 
there are strictly $\F$-consistent scoring functions for $T$ with linearly $(M_\O,M_\A)$-invariant score differences if and only if there is some $c>0$ such that \eqref{eq:condition translation inv} holds. Then, any such scoring function is necessarily equivalent to $S_d$ defined at \eqref{eq:S_c} almost everywhere, with $d\ge c$.
\end{enumerate}
\end{prop}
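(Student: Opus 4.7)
My plan for part (i) is a direct verification. A short algebraic expansion shows that $S_c$ coincides, up to an additive function of $y$, with the scoring function of the form \eqref{eq:S_VaR,ES} corresponding to $g(x) = cx - x^2/2$ and $\phi(x_2) = \a x_2^2/2$. Strict $\F$-consistency on $\A_c$ is then established by computing
\[
\partial_{x_1}\bar{S}_c(x_1,x_2,F) = (c + x_2 - x_1)(F(x_1) - \a),
\]
and observing that on $\A_c$ the factor $c + x_2 - x_1$ is strictly positive, so the only stationary $x_1$ is $\VaR_\a(F)$; subsequently $\partial_{x_2}\bar{S}_c(\VaR_\a(F), x_2, F) = \a(x_2 - \ES_\a(F))$ identifies $x_2 = \ES_\a(F)$, and the positive second derivatives confirm a minimum. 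Linear $(M_\O, M_\A)$-invariance of $S_c$ itself (strictly stronger than the required invariance of score differences) is checked by direct substitution: since $\one\{y - z \le x_1 - z\} = \one\{y \le x_1\}$, each of the three summands in $S_c$ is individually seen to be invariant under $(x_1,x_2,y)\mapsto(x_1 - z, x_2 - z, y-z)$.

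The ``if'' direction of part (ii) is immediate from part (i). For the ``only if'' direction, assume $S$ is strictly $\F$-consistent with linearly $(M_\O, M_\A)$-invariant score differences. By \citet[Theorem 5.2(iii)]{FisslerZiegel2016}, $S$ is equivalent almost everywhere to a scoring function of the form \eqref{eq:S_VaR,ES}. The key step is to extract from the invariance hypothesis a functional equation on $(g, \phi)$. Setting $A(x_1,x_2,y;z) := S(x_1 - z, x_2 - z, y - z) - S(x_1, x_2, y)$, linear invariance of score differences is equivalent to $A$ being independent of $(x_1, x_2)$. Computing $\partial_{x_1} A$ from \eqref{eq:S_VaR,ES} and choosing $y > x_1$ so that the indicator drops out yields
\[
\bigl[g'(x_1 - z) - g'(x_1)\bigr] + \frac{1}{\a}\bigl[\phi'(x_2 - z) - \phi'(x_2)\bigr] = 0
\]
for all admissible $(x_1, x_2, z)$. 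Differentiating in $x_1$ forces $g''$ constant, and the equation then forces $\phi''/\a = -g''$; hence both $g$ and $\phi$ are quadratic polynomials. Combined with strict convexity $\phi'' > 0$ (from the conditions of \citet[Theorem 5.2(iii)]{FisslerZiegel2016}), this yields $g(x) = -Cx^2/2 + Dx + K$ and $\phi(x) = \a Cx^2/2 + Ex + F$ with $C > 0$.

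Rescaling by $1/C$, one may take $C = 1$. A short calculation shows that the linear term $Ex_2$ in $\phi$ modifies the scoring function precisely by $(E/\a)(\one\{y \le x_1\} - \a)(x_1 - y)$ plus a function of $y$, which amounts to shifting the linear coefficient of $g$; consequently $S$ is equivalent to $S_d$ for some $d \in \R$. Finally, for $S_d$ to be strictly $\F$-consistent on an action domain containing $T(\F)$, the factorization $\partial_{x_1}\bar S_d = (d + x_2 - x_1)(F(x_1) - \a)$ forces $d + x_2 - x_1 > 0$ throughout the action domain (otherwise $\VaR_\a(F)$ would be a local maximum of $\bar S_d(\cdot, x_2, F)$); consequently the action domain is contained in $\A_d$. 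From $T(\F) \subseteq \A_d$ we obtain $\VaR_\a(F) - \ES_\a(F) < d$ for all $F \in \F$, proving \eqref{eq:condition translation inv}, and the inclusion $\A_c \subseteq \A_d$ forced by $S_d$ covering the prescribed $\A_c$ domain gives $d \ge c$. The main obstacle is the functional-equation argument pinning down $g$ and $\phi$ as quadratic polynomials; the remaining bookkeeping to identify the equivalence class $S_d$ is routine.
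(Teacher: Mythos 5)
Your route is essentially the paper's: you identify $S_c$ as the member of the family \eqref{eq:S_VaR,ES} with $g(x_1)=cx_1-x_1^2/2$ and $\phi(x_2)=\a x_2^2/2$, check translation invariance by direct substitution, and in (ii) reduce via \citet[Theorem 5.2(iii)]{FisslerZiegel2016} to the $(g,\phi)$ family and extract a functional equation forcing $\phi''$ constant and $g''=-\phi''/\a$, hence quadratic $g,\phi$ and equivalence to some $S_d$. Your variations are harmless: in (i) you verify consistency by first-order conditions instead of citing \citet[Theorem 5.2(ii)]{FisslerZiegel2016} (note that the second-derivative remark only certifies a local minimum; strict consistency needs the global argument, which your factorization $(c+x_2-x_1)(F(x_1)-\a)$ does supply since it gives strict monotonicity of $x_1\mapsto\bar S_c(x_1,x_2,F)$ on either side of $\VaR_\a(F)$ for every fixed $x_2$), and in (ii) you differentiate the invariance identity in $x_1$ with $y>x_1$, where the paper differentiates in $x_2$ and in $z$; both give the same constraints. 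One small repair: Theorem 5.2(iii) only provides $g\in C^1$, so do not differentiate $g'$ again; instead conclude from the separated equation $g'(x_1-z)-g'(x_1)=c(z)$ (Cauchy-type, with $g'$ continuous) that $g'$ is affine.

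The genuine gap is the final necessity step of (ii). Your claim that strict consistency forces $d+x_2-x_1>0$ \emph{throughout the action domain}, justified by ``otherwise $\VaR_\a(F)$ would be a local maximum of $\bar S_d(\cdot,x_2,F)$'', does not stand as written: a local maximum along an $x_1$-section at a point $(\VaR_\a(F),x_2)$ with $x_2\neq\ES_\a(F)$ is perfectly compatible with strict consistency, which only constrains the global minimum at $T(F)$. What is needed (and all that is true) is the inequality on $T(\F)$: the paper gets it by invoking the necessity part of \citet[Theorem 5.2(iii)]{FisslerZiegel2016}, namely that $x_1\mapsto x_1\phi'(x_2)/\a+g(x_1)$ must be strictly increasing, which with the quadratic $g,\phi$ yields $x_2+d>x_1$ on $T(\F)$, i.e.\ \eqref{eq:condition translation inv} with $c=d$, and hence the ``$d\ge c$'' conclusion. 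To salvage your direct argument you must run it at $T(F)$ itself: if some $F\in\F$ had $\VaR_\a(F)\ge\ES_\a(F)+d$, then $x_1\mapsto\bar S_d(x_1,\ES_\a(F),F)$ would fail to have a local minimum at the first coordinate of the minimizer, contradicting strict consistency -- and even then you need the interior/surjectivity assumptions inherited from Theorem 5.2(iii) to guarantee that the relevant horizontal neighbourhood of $T(F)$ lies in the action domain. Relatedly, your closing sentence about ``the prescribed $\A_c$ domain'' has no referent in part (ii); the correct bookkeeping is simply that $T(\F)\subseteq\A_d$ means $d$ itself certifies \eqref{eq:condition translation inv}, which is exactly the paper's statement that there is $c>0$ with $T(\F)\subseteq\A_c$ and $d\ge c$.
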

\begin{proof}
The scoring function $S_c$ is of equivalent form as given at \eqref{eq:S_VaR,ES} with $g(x_1) = -x_1^2/2 + cx_1$ and $\phi(x) = (\a/2)x_2^2$. This means that $\phi$ is strictly convex and the function $x_1\mapsto x_1\phi'(x_2)/\a + g(x_1)$ is strictly increasing in $x_1$ if and only if $x_2 + c>x_1$, that is, if and only if $(x_1, x_2)\in\A_c$, such that we obtain the $\F$-consistency of $S_c$ with \citet[Theorem 5.2(ii)]{FisslerZiegel2016}. 
A direct computation yields that $S_c(x_1+z,x_2+z,y+z) = S_c(x_1,x_2,y)$ for all $(x_1,x_2)\in\A_c$, $y,z\in\R$. This proves the first part.

Under the conditions of \citet[Theorem 5.2(iii)]{FisslerZiegel2016}, any strictly $\F$-consistent scoring function $S\colon\A\times \R\to\R$, where $\A = T(\F)$, is almost everywhere of the form given at 
\eqref{eq:S_VaR,ES} with $g$ continuously differentiable and $\phi$ twice continuously differentiable.
By translation invariance of score differences the function $\Psi\colon \R\times \A \times \A \times \R \to\R$, 
\begin{align*}
\Psi(z,x_1,x_2, x_1', x_2', y) & = S(x_1 +z,x_2+z,y+z) - S(x'_1 +z,x'_2+z,y+z) \\
&\quad - S(x_1,x_2,y) + S(x'_1,x'_2,y)
\end{align*}
constantly vanishes. Let $z, y\in\R$ and $(x_1, x_2), (x_1', x_2')\in\A$. Then
\begin{multline*}
0 = \frac{\diff}{\diff x_2} \Psi(z,x_1,x_2, x_1', x_2', y) 
= \big(x_2 - x_1 + \frac{1}{\a}\one\{y\le x_1\}(x_1-y)\big)(\phi''(x_2+z) - \phi''(x_2)),
\end{multline*}
hence $\phi''$ is constant, that is,  $\phi(x_2) = d_1x_2^2 + d_2x_2 + d_3$ with $d_1>0$ (ensuring the strict convexity of $\phi$) and $d_2,d_3\in\R$. Similarly, the derivative of $\Psi$ with respect to $z$ must vanish for all $z, y\in\R$ and $(x_1, x_2)$, $(x_1', x_2')\in\A$. 
A calculation yields 
\begin{align*}
0 = \frac{\diff}{\diff z} \Psi(z,x_1,x_2, x_1', x_2', y) 
&= \big(\one\{y\le x_1\} - \alpha\big) g'(x_1 + z) - \one\{y\le x_1\} g'(y+z) \\
&\quad - \big(\one\{y\le x'_1\} - \alpha\big) g'(x'_1 + z) + \one\{y\le x'_1\} g'(y+z) \\
&\quad +\frac{2d_1}{\a} \big(  \one\{y\le x_1\}(x_1-y) - x_1\big) \\
&\quad -\frac{2d_1}{\a} \big( \one\{y\le x'_1\}(x'_1-y) - x'_1\big).
\end{align*}
This implies that necessarily $g'(x_1) = (-2d_1/\a)x_1 + d_4$ for some $d_4\in\R$. Hence, $g(x_1) = (-d_1/\a)x_1^2 + d_4x_1 + d_5$ for some $d_5\in\R$. Now, by \citet[Theorem 5.3(iii)]{FisslerZiegel2016}, the function
\[
\psi_{x_2}(x_1) = x_1\phi'(x_2)/\a + g(x_1)
= x_1(2d_1x_2 +d_2)/\a - d_1x_1^2/\a + d_4x_1 +d_5
\]
must be strictly increasing in $x_1$ which holds if and only if 
\[
x_2 + \frac{d_2+d_4\a}{2d_1}  >x_1.
\]
This condition is satisfied for all $(x_1,x_2)\in\A=T(\F)$ if and only there is a $c>0$ such that $T(\F)\subseteq \A_c$  and 
$d:= (d_2+d_4\a)/(2d_1)\ge c$.
The scoring function at \eqref{eq:S_VaR,ES} with $\phi(x_2) =d_1x_2^2 + d_2x_2 + d_3$,  $d_1>0$, $d_2, d_3\in\R$, $g(x_1)= (-d_1/\a)x_1^2 + d_4x_1 + d_5$, $d_4, d_5 \in\R$ is equivalent to $S_d$ defined at \eqref{eq:S_c}, which concludes the proof.
\end{proof}

The scoring function $S_c$ has a close relationship to the class of scoring functions $S^W$ proposed in \cite{AcerbiSzekely2014}; see \citet[Equation (5.6)]{FisslerZiegel2016}. Indeed, $S_c(x_1, x_2, y) = c\big(\one\{y\le x_1\} - \a\big) (x-y) + S^W(x_1,x_2,y)$ with $W=1$. That means it is the sum of the standard $\a$-pinball loss for $\VaR_\a$ -- which is translation invariant -- and $S^1$. In the same flavor, the condition at \eqref{eq:condition translation inv} is similar to the one at \citet[Equation (5.7)]{FisslerZiegel2016}. Since $\ES_\a\le \VaR_\a$, the maximal action domain where $S_c$ is strictly consistent is the stripe $\A_c=\{(x_1,x_2)\in\R^2\colon  x_2 \le x_1< x_2 +c\}$. Of course, by letting $c\to\infty$, one obtains the maximal sensible action domain $\{(x_1,x_2)\in\R^2\colon x_1\ge x_2\}$ for the pair $(\VaR_\a, \ES_\a)$. However, considering the properly normalized version $S_c/c$, this converges to a strictly consistent scoring function for $\VaR_\a$ as $c\to\infty$, but which is independent of the forecast for $\ES_\a$. Hence, there is a caveat concerning the tradeoff between the size of the action domain and the sensitivity in the ES-forecast. This might cast doubt on the usage of scoring functions with translation invariant score differences for $(\VaR_\a, \ES_\a)$ in general. 

Interestingly, the scoring function $S_c$ at \eqref{eq:S_c} has positively homogeneous score differences if and only if $c =0$. However, $\A_0 = \emptyset$, which means that the requirement of translation invariance and homogeneity for score differences are mutually exclusive in case of strictly consistent scoring functions for $(\VaR_\a, \ES_\a)$.

\subsection{Homogeneity}\label{sec:homog}

If one is interested in a positively homogeneous functional of degree one such as the mean, expectiles, quantiles, or ES, a scoring function $S\colon\R\times\R\to\R$ is said to have positively homogeneous score differences of degree $b\in\R$ for this functional if the scoring function is uniformly linearly $\pi$-equivariant with $\Phi=\{\R\ni x\mapsto c x \in \R, c>0\}$ the multiplicative group, $\pi$ the identity on $\Phi$ and $\lambda = c^b$ in \eqref{eq:defn:equivariance}. This means that $S$ needs to satisfy 
\be{eq:homogeneity 1}
S(cx,cy) - S(cz,cy) = c^b\big( S(x,y) - S(z,y) \big)
\ee
for all $x,z,y\in\R$ and $c>0$. Since positive homogeneity of score differences is equivalent to invariance of forecast rankings under a change of unit, it has been argued that it is important in financial applications \citep{AcerbiSzekely2014}. \cite{NoldeZiegel2017} give a characterization of scoring functions with positively homogeneous score differences for many risk measures of applied interest, such as VaR\,/\,quantiles, expectiles, and the pair (VaR, ES); cf.\ \cite{Patton2011} for results concerning the mean functional.


If the functional $T$ is vector-valued, the degree of homogeneity can be different in the respective components, e.g.\ in case of the pair (mean, variance) or the vector consisting of the first $k$ moments; cf.\ Example \ref{exmp:equivariance}(vi). One can denote this property by \emph{mixed positive homogeneity}, which means in case of the vector of the first $k$ moments that
\be{}
T(\L(cY)) = \Lambda(c)T(\L(Y))
\ee
for all $c>0$, where $\Lambda(c)$ is the $k\times k$-diagonal matrix with diagonal elements $c,c^2, \ldots, c^k$.\footnote{Of course, we tacitly assume that for all $x\in\A$ and for all $c>0$, we have $\Lambda(c)x\in\A$.}
In this situation, an interesting instant for uniformly linearly $\pi$-order-preserving scoring functions $S\colon\A\times \R\to\R$ are those with \emph{mixed positively homogeneous} score differences of degree $b\in\R$. That is, 
\be{eq:mixed homogeneity 3}
S(\Lambda(c)x,cy) - S(\Lambda(c)z, cy) = c^b \big(S(xy) - S(z, y)\big)
\ee
for all $x,z\in\A$, $y\in\R$, and for all $c>0$. With $k=2$, corresponding assertions hold for the pair (mean, variance) and the respecitve scoring functions.

\begin{prop}\label{prop:mixed moments}
Let $\A\subseteq \R^k$ such that $\Lambda(c)x\in\A$ for all $c>0$, $x\in\A$. Let $S\colon\A\times\R\to\R$ be a consistent scoring function for the vector of the first $k$ moments of the form
\be{eq:score}
S(x,y) = -\phi(x) + \nabla\phi(x)
\left(x - (y,y^2, \ldots, y^k)^\top\right) + a(y),
\ee
where $\phi\colon\A\to\R$ is convex and differentiable with gradient $\nabla \phi$ (considered as a row vector).
Then $S$ has mixed positively homogeneous score differences of degree $b\in\R$ if and only if for all $c>0$ the map 
\be{eq:cond mixed}
x\mapsto \nabla\phi(\Lambda(c)x)\Lambda(c) -c^b \nabla\phi(x)
\ee
is constant.
\end{prop}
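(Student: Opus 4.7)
The plan is to substitute the explicit form \eqref{eq:score} of $S$ into the mixed-homogeneity identity \eqref{eq:mixed homogeneity 3} and then decouple the two sides using the linear independence of the monomials $y, y^{2}, \ldots, y^{k}$ over $\R$.

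First I would introduce $q(y) := (y, y^{2}, \ldots, y^{k})^{\top}$ and exploit the crucial identity $q(cy) = \Lambda(c)\,q(y)$ to get
\[
S(\Lambda(c)x, cy) = -\phi(\Lambda(c)x) + \nabla\phi(\Lambda(c)x)\,\Lambda(c)\bigl(x - q(y)\bigr) + a(cy).
\]
Abbreviating $A(x,c) := \nabla\phi(\Lambda(c)x)\,\Lambda(c) - c^{b}\nabla\phi(x)$ and $\psi_c(x) := \phi(\Lambda(c)x) - c^{b}\phi(x)$, the difference
\[
D(x,z,c,y) := \bigl[S(\Lambda(c)x, cy) - S(\Lambda(c)z, cy)\bigr] - c^{b}\bigl[S(x,y) - S(z,y)\bigr]
\]
reduces, after cancellation of the $a$-terms, to
\[
D(x,z,c,y) = \psi_c(z) - \psi_c(x) + A(x,c)\,x - A(z,c)\,z - \bigl[A(x,c) - A(z,c)\bigr]\,q(y).
\]

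Since $y, y^{2}, \ldots, y^{k}$ are linearly independent as functions of $y\in\R$, the requirement $D\equiv 0$ in $y$ is equivalent to the simultaneous vanishing of (a) the row-vector coefficient $A(x,c) - A(z,c)$ for all $x,z\in\A$, and (b) the $y$-free remainder $\psi_c(z) - \psi_c(x) + A(x,c)\,x - A(z,c)\,z$. Condition (a) is precisely the constancy requirement \eqref{eq:cond mixed}, which immediately settles the direction $(\Rightarrow)$.

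For the converse I would denote by $G(c)\in\R^{k}$ the common value of $A(\cdot,c)$ granted by (a). Then $\nabla\psi_c(x) = G(c)$ on $\A$, so integrating along paths in (each connected component of) $\A$ gives $\psi_c(x) = G(c)\,x + k_c$ for some constant $k_c\in\R$. Substituting back into the remainder in (b), the $\psi_c$-terms contribute $G(c)(x-z)$ while the $A$-terms contribute $-G(c)(x-z)$, giving $0$; hence both parts of $D$ vanish. I anticipate no significant obstacle — the only mild point is the integration step yielding the affine form of $\psi_c$, which makes the $y$-free identity \textbf{(b)} hold automatically once the gradient condition \eqref{eq:cond mixed} is imposed (tacitly assuming $\A$ to be path-connected, as it is in all applications of interest such as the cone for (mean, variance)).
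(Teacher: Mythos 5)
Your proposal is correct and follows essentially the same route as the paper's proof: substitute the Bregman-type form into the mixed-homogeneity identity, use that both sides are polynomials in $y$ (equivalently, linear independence of $1,y,\ldots,y^k$) to extract the gradient condition \eqref{eq:cond mixed}, and for the converse integrate the constant gradient to conclude that $\phi(\Lambda(c)\cdot)-c^b\phi(\cdot)$ is affine, which is what the paper phrases as a Taylor expansion. Your explicit remark that the integration step needs $\A$ (path-)connected is a fair observation; the paper's affineness claim tacitly relies on the same assumption.
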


\begin{proof}
Suppose $\phi$ satisfies \eqref{eq:cond mixed}. 
This implies that for any $c>0$ the map $z\mapsto \phi(\Lambda(c)z) - c^b\phi(z)$ is an affine function. Moreover, a Taylor expansion yields that for all $x,z\in\A$
\[
 \phi(\Lambda(c)z) - c^b\phi(z)= \big(\nabla\phi(\Lambda(c)x)\Lambda(c) - c^b\nabla\phi(x)\big) (z-x)  + \phi(\Lambda(c)x) - c^b\phi(x).
\]
Then, a direct calculation yields the result.

Now, suppose \eqref{eq:mixed homogeneity 3} is satisfied. Its left-hand side equals 
\begin{gather*}
- \phi(\Lambda(c)x) + \nabla\phi(\Lambda(c)x)\Lambda(c)x + \phi(\Lambda(c)z) - \nabla\phi(\Lambda(c)z)\Lambda(c)z \\
+ \Big(\nabla\phi(\Lambda(c)z)\Lambda(c) -\nabla\phi(\Lambda(c)x)\Lambda(c)\Big) 
(y,y^2, \ldots, y^k)^\top,
\end{gather*}
whereas the right-hand side is
\begin{gather*}
- c^b\phi(x) + c^b\nabla\phi(x)x + c^b\phi(z) - c^b\nabla\phi(z)z 
+ c^b\Big(\nabla\phi(z) -\nabla\phi(x)\Big) 
(y,y^2, \ldots, y^k)^\top.
\end{gather*}
Both terms are polynomials in $y$ of degree $k$, which leads to the identity
\[
\nabla\phi(\Lambda(c)z)\Lambda(c) -\nabla\phi(\Lambda(c)x)\Lambda(c) = 
c^b\Big(\nabla\phi(z) -\nabla\phi(x)\Big).
\]
This is exactly condition \eqref{eq:cond mixed}.
\end{proof}

Recall that the scoring functions of the form at \eqref{eq:score} are essentially all consistent scoring functions for the vector of different moments \cite[Proposition 4.4]{FisslerZiegel2016}.
Using Proposition \ref{prop:mixed moments} it is straight forward to derive consistent scoring functions for (mean, variance) with mixed positively homogeneous score differences.

\begin{cor}\label{cor:EVarHOM}
Let $\F$ be a class of distributions on $\R$ with finite second moments such that the functional $T = (\textup{mean, variance})\colon \F\to\A\subseteq \R\times[0,\infty)$ is surjective, where for all $(x_1,x_2)\in \A$ and $c>0$, $(cx_1,c^2x_2)\in\A$. 
Let Assumptions (F1) and (V1) from \cite{FisslerZiegel2016} be satisfied with the strict $\F$-identification function $V\colon\A\times \R\to\R^2$, $V(x_1,x_2,y) = \big(x_1 - y, x_2 + x_1^2 - y^2 \big)^\top$. 
Let $S\colon\A\times \R\to\R$ be a strictly $\F$-consistent scoring function for $T$ that is (jointly) continuous and for any $y\in\R$, the function $\A\ni x\mapsto S(x,y)$ be twice continuously differentiable.
Then $S$ has mixed positively homogeneous score differences of degree $b\in\R$ if and only if 
\be{eq:form score}
S(x_1,x_2,y) = -\phi(x_1,x_2+x_1^2) + \nabla \phi(x_1,x_2+x_1^2)
\begin{pmatrix}
x_1 - y \\
x_2+x_1^2 - y^2
\end{pmatrix} + a(y),
\ee
where $\phi\colon\A\to\R$ is strictly convex, twice continuously differentiable, and moreover for all $c>0$ the map
\be{eq:condition 2}
\A\ni (x_1, x_2)\mapsto \nabla\phi(cx_1, c^2x_2+c^2x_1^2)\begin{pmatrix}
c & 0 \\ 0 & c^2
\end{pmatrix} - c^b\nabla\phi(x_1,x_2+x_1^2)
\ee
is constant.
\end{cor}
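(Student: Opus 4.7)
The plan is to reduce the corollary to Proposition \ref{prop:mixed moments} via the revelation principle, mirroring the argument of Proposition \ref{prop:mean variance os}. Set $g\colon \A\to \A'$, $g(x_1,x_2) = (x_1, x_2 + x_1^2)$, and define $S'(m_1, m_2, y) = S(m_1, m_2 - m_1^2, y)$. The revelation principle implies that $S'$ is a strictly $\F$-consistent scoring function for the pair $T' = (T_1, T_2 + T_1^2)$ of the first two moments, with the same joint continuity and partial $C^2$-smoothness as $S$. Invoking \cite[Proposition 4.4]{FisslerZiegel2016} with the canonical identification function $V'(m_1,m_2,y) = (m_1-y,\, m_2-y^2)^\top$ (in the strictly consistent / strictly convex form, as in the proof of Proposition \ref{prop:mean variance os}) yields that $S'$ admits the Bregman representation on $\A'$ with a strictly convex, twice continuously differentiable potential $\phi\colon \A'\to \R$. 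Substituting $(m_1,m_2) = g(x_1,x_2)$ back produces exactly the form \eqref{eq:form score}.

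Next, I would translate the mixed positive homogeneity across $g$. The key observation is the intertwining identity
\[
g(cx_1,\, c^2 x_2) \;=\; \bigl(cx_1,\, c^2 x_2 + c^2 x_1^2\bigr) \;=\; \Lambda(c)\, g(x_1,x_2), \qquad \Lambda(c) = \operatorname{diag}(c, c^2),
\]
where $\Lambda(c)$ is the scaling matrix associated with the first two moments. Since $S(x,y) = S'(g(x),y)$ and $g$ is a bijection onto $\A'$, the mixed positively homogeneous score difference property for $S$ (with the $(\textup{mean, variance})$-scaling) is equivalent to the mixed positively homogeneous score difference property \eqref{eq:mixed homogeneity 3} for $S'$ with $k=2$.

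Finally, I would apply Proposition \ref{prop:mixed moments} with $k=2$ to $S'$: its score differences are mixed positively homogeneous of degree $b$ if and only if, for every $c>0$, the map $(m_1,m_2) \mapsto \nabla\phi(\Lambda(c)(m_1,m_2))\Lambda(c) - c^b \nabla\phi(m_1,m_2)$ is constant on $\A'$. Reparameterizing via $(m_1,m_2) = (x_1, x_2+x_1^2)$ reproduces verbatim condition \eqref{eq:condition 2}. The only real obstacle is the bookkeeping across the two reductions — first revelation, then Proposition \ref{prop:mixed moments} — and checking that the hypotheses of \cite[Proposition 4.4]{FisslerZiegel2016} transfer to $S'$ and $V'$; both points go through exactly as in the proof of Proposition \ref{prop:mean variance os}, so no further work is required.
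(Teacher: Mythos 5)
Your proposal is correct and follows essentially the same route as the paper, whose proof is simply that the form \eqref{eq:form score} follows as in the proof of Proposition \ref{prop:mean variance os} and the rest follows from Proposition \ref{prop:mixed moments}. Your write-up merely makes explicit what the paper leaves implicit, namely the intertwining identity $g(cx_1,c^2x_2)=\Lambda(c)g(x_1,x_2)$ showing that mixed homogeneity of the score differences of $S$ is equivalent to that of the transformed score $S'$ for the first two moments.
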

\begin{proof}
The form at \eqref{eq:form score} follows as in the proof of Proposition \ref{prop:mean variance os}. The rest follows by Proposition \ref{prop:mixed moments}.
\end{proof}

It appears that the class of (strictly) convex functions $\phi$ satisfying \eqref{eq:cond mixed} is rather flexible. One subclass is the class of additively separable functions $\phi$. That is, 
\be{eq:additive}
\phi(x) = \sum_{m=1}^k \phi_m(x_m),
\ee
where each $\phi_m$ needs to be convex and $x_m\mapsto c^m\phi_m'(c^mx_m) - c^b\phi_m'(x_m)$ constant. 
Reviewing \citet[Theorem 5]{NoldeZiegel2017} and restricting attention to the case $\A \subseteq (0,\infty)^k$, $\phi_m$ can be an element of the class $\Psi_{b/m}$, where $\Psi_b$ consists of functions $\psi_b\colon(0,\infty)\to\R$ of the form
\[
\psi_b(y) = \begin{cases}
d_0+ d_1y^b/(b(b-1)), & \text{for } b\in\R\setminus\{0,1\} \\
d_0 +d_1y\log(y) + d_2y, & \text{for } b=1 \\
d_0 - d_1\log(y) + d_2y, & \text{for } b=0
\end{cases}
\]
with constants $d_1>0$, $d_0,d_2\in\R$. 
On the other hand, there are choices of $\phi$ not satisfying such a additive decomposition as in \eqref{eq:additive}. One such example can be found in Example \ref{exmp:os for mean, variance} for $b=-2$, and is of the form
$\phi(x_1,x_2) = (x_2 - x_1^2)^{-1}$ for $x_2>x_1^2$.

\section*{Acknowledgement}
Tobias Fissler would like to thank
Tilmann Gneiting, Werner Ehm, Sebastian Lerch, Alexander Jordan, Fabian Kr\"uger, and Jonas Brehmer for stimulating and insightful discussions during a research visit at the Heidelberg Institute for Theoretical Studies.
He is grateful to the Department of Mathematics at Imperial College London for funding his Chapman Fellowship.

\bibliographystyle{plainnat}

\appendix
\section*{Appendix}

\section{Refinements of \cite{FisslerZiegel2016}}\label{appendix:A}

As detailed \cite{Brehmer2017} there are two technicalities that need to be resolved in \citet[Proposition 3.4]{FisslerZiegel2016}: Firstly, due to the particular choice of the integration path in the original version of \citet[Proposition 3.4]{FisslerZiegel2016}, the image of the integration path is not necessarily contained in $\interior(\A)$. Secondly, one needs to assume that the identification function $V$ is locally bounded \emph{jointly} in the two components. Proposition \ref{prop:Proposition 3.4} gives a refined version of \citet[Proposition 3.4]{FisslerZiegel2016}.

\begin{prop}\label{prop:Proposition 3.4}
Assume that $\interior(\A)\subseteq \R^k$ is simply connected and let $T\colon \F\to\A$ be a surjective, elicitable and identifiable functional with a strict $\F$-identification function $V\colon\A\times \O\to\R^k$ and a strictly $\F$-consistent scoring function $S\colon\A\times \O\to\R$. Suppose that Assumption (V1), (V2), (S1) from \cite{FisslerZiegel2016} are satisfied. Let $h$ be the matrix-valued function appearing at \citet[Equation (3.2)]{FisslerZiegel2016}. 

For any $F\in\F$ and any points $x,z\in\interior(\A)$ such that $\gamma\colon[0,1]\to\interior(\A)$ is an integration path with $\gamma(0)=x$, $\gamma(1)=z$ the score difference is necessarily of the form
\begin{align} \label{eq:Prop 3.4}
\bar S(x,F) -\bar S(z,F) = \int_{\gamma} \dint \bar S(\cdot, F) = \int_0^1 h\big(\gamma(\lambda)\big) \bar V\big(\gamma(\lambda),F\big)\gamma'(\lambda)\dint \lambda\,.
\end{align}
Moreover, if Assumptions (F1) and (VS1) from \cite{FisslerZiegel2016} are satisfied and $V$ is locally bounded, then there is a Lebesgue null set $N\subseteq \A\times \O$ such that for all $(x,y)\in N^c$, $(z,y)\in N^c$ it necessarily holds that 
\begin{align} \label{eq:Prop 3.4b}
S(x,y) - S(z,y) = \int_{\gamma} \dint  S(\cdot, y) = \int_0^1 h\big(\gamma(\lambda)\big)  V\big(\gamma(\lambda),y\big)\gamma'(\lambda)\dint \lambda\,,
\end{align}
where again $\gamma\colon[0,1]\to\interior(\A)$ is an integration path with $\gamma(0)=x$, $\gamma(1)=z$.
\end{prop}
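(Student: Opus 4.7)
The proof rests on Osband's principle (Theorem 3.2 of \cite{FisslerZiegel2016}), which under assumptions (V1), (V2), (S1) provides the gradient representation
\[
\nabla \bar S(x, F) = h(x) \bar V(x, F)
\]
for every $x \in \interior(\A)$ and every $F \in \F$. For the first claim \eqref{eq:Prop 3.4}, my plan is to apply the fundamental theorem of calculus to $\lambda \mapsto \bar S(\gamma(\lambda), F)$; the chain rule together with the above representation immediately produces the integrand $h(\gamma(\lambda))\,\bar V(\gamma(\lambda), F)\,\gamma'(\lambda)$. The essential refinement over the original Proposition~3.4 is that $\gamma$ must remain inside $\interior(\A)$, where the gradient formula is valid. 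The assumed simple connectedness, and hence path connectedness, of $\interior(\A)$ guarantees that between any two interior points there is a piecewise smooth $\gamma \colon [0,1] \to \interior(\A)$, so that the line integral is well defined; smoothness of $\bar S(\cdot, F)$ on $\interior(\A)$ (from (S1)) ensures the path integral is path-independent.

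For the pointwise identity \eqref{eq:Prop 3.4b}, the strategy is to upgrade the expectation-level identity to a pointwise one, modulo a joint Lebesgue null set. Under (F1), (VS1), and the new local joint boundedness of $V$, the dominated convergence theorem legitimates differentiation under the expectation sign to give $\nabla_x \bar S(x, F) = \E_F[\nabla_x S(x, Y)]$, so that
\[
\E_F\big[\nabla_x S(x, Y) - h(x)\, V(x, Y)\big] = 0
\]
for every $F \in \F$ and every $x \in \interior(\A)$. Invoking the richness of $\F$ afforded by (F1) together with Fubini's theorem, one extracts a Lebesgue null set $N \subset \A \times \O$ outside of which the pointwise identity $\nabla_x S(x, y) = h(x)\, V(x, y)$ holds. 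For $(x, y), (z, y) \in N^c$, Fubini ensures that for this fixed $y$ the slice $N_y := \{x \in \A : (x, y) \in N\}$ has Lebesgue measure zero, so one can choose a piecewise smooth $\gamma \colon [0,1] \to \interior(\A)$ from $x$ to $z$ whose trace meets $N_y$ only on a $\lambda$-null subset of $[0,1]$; a pointwise fundamental theorem of calculus applied to $\lambda \mapsto S(\gamma(\lambda), y)$ then yields \eqref{eq:Prop 3.4b}.

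The main technical obstacle is precisely this null-set bookkeeping: exhibiting a single Lebesgue null set $N$ that works uniformly for all admissible pairs of endpoints. One has to verify that $x \mapsto S(x, y)$ is absolutely continuous along the chosen integration path for $y$ outside a further null set, which is the delicate issue flagged in \cite{Brehmer2017} and the reason the local joint boundedness of $V$ must be added. The combination of (VS1) with this boundedness guarantees integrability of $h(\gamma(\lambda))\, V(\gamma(\lambda), y)\, \gamma'(\lambda)$ along generic paths, while simple connectedness of $\interior(\A)$ supplies the paths themselves; the work in this part of the proof is merely to arrange these ingredients in the correct order and to show that, on $N^c$, the endpoints and a suitable connecting path can be selected simultaneously without further exceptional sets intruding.
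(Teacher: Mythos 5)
Your treatment of \eqref{eq:Prop 3.4} matches the paper's: Osband's principle gives $\nabla \bar S(x,F)=h(x)\bar V(x,F)$ on $\interior(\A)$, and the fundamental theorem for line integrals along a path that stays in $\interior(\A)$ (the paper cites \citealp[Satz 2, p.~183]{Koenigsberger2004}) yields the claim; the only content of the refinement is indeed that the path must remain in $\interior(\A)$, which simple connectedness guarantees. That part is fine.

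For the pointwise identity \eqref{eq:Prop 3.4b} your route differs from the paper's and has genuine gaps. The paper defers to the original proof in \cite{FisslerZiegelSupplementary2016} as corrected by \cite{Brehmer2017}, and that argument never differentiates $S(\cdot,y)$: one fixes $(x,y),(z,y)$ in the continuity set supplied by (VS1), uses (F1) to choose $F_n\in\F$ converging weakly to $\delta_y$ with supports in a fixed compact set, applies the already established identity \eqref{eq:Prop 3.4} to each $F_n$, and passes to the limit; the joint local boundedness of $V$ is needed precisely to dominate the integrands $h(\gamma(\lambda))V(\gamma(\lambda),y')$ over $y'$ in the supports and $\lambda\in[0,1]$, while (VS1) handles the convergence of $\bar S(x,F_n)\to S(x,y)$ and $\bar V(\gamma(\lambda),F_n)\to V(\gamma(\lambda),y)$. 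Your plan inverts this order and breaks at three points. First, the interchange $\nabla_x \bar S(x,F)=\E_F[\nabla_x S(x,Y)]$ is unjustified: the assumptions control only the expected score $\bar S(\cdot,F)$, and $\nabla_x S(x,y)$ need not even exist everywhere (the pinball loss for quantiles already fails this at $x=y$). Second, ``richness of $\F$ plus Fubini'' does not upgrade $\E_F[\nabla_x S(x,Y)-h(x)V(x,Y)]=0$ for all $F\in\F$ to a pointwise a.e.\ identity: (F1) only provides weak approximation of Dirac measures, which is of no use without continuity in $y$ of the integrand, and neither (VS1) nor any other assumption gives continuity of $\nabla_x S(x,\cdot)$. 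Third, even granting the a.e.\ gradient identity along a path avoiding $N_y$, the ``pointwise fundamental theorem of calculus'' requires absolute continuity of $\lambda\mapsto S(\gamma(\lambda),y)$; you flag this yourself but never establish it, and joint local boundedness of $V$ does not supply it (a.e.\ differentiability with a bounded, integrable derivative does not imply the fundamental theorem -- the Cantor function is the standard obstruction). The limiting argument on the integrated identity is exactly how the paper's proof avoids all three issues, so as written your second part does not go through.
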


\begin{proof}
Equation \eqref{eq:Prop 3.4} follows from \citet[Theorem 3.2]{FisslerZiegel2016} and \citet[Satz 2, p.\ 183]{Koenigsberger2004}. The proof of \eqref{eq:Prop 3.4b} follows the lines of the original proof in \cite{FisslerZiegelSupplementary2016}; cf.\ \citet[Theorem 1.31]{Brehmer2017} for details.
\end{proof}

\end{document}